\newtheorem*{rep@theorem}{\rep@title}
\newcommand{\newreptheorem}[2]{
\newenvironment{rep#1}[1]{
 \def\rep@title{#2 \ref{##1}}
 \begin{rep@theorem}}
 {\end{rep@theorem}}}
\newtheorem{thm}{Theorem}
\newtheorem{lem}[thm]{Lemma}
\newtheorem{cor}[thm]{Corollary}
\newtheorem{prop}[thm]{Proposition}
\newtheorem{conj}[thm]{Conjecture}
\newtheorem{problem}[thm]{Problem}
\newtheorem*{thm*}{Theorem}
\theoremstyle{definition}
\newtheorem{defn}[thm]{Definition}
\DeclareMathOperator{\cyc}{cyc}
\DeclareMathOperator{\ord}{ord}
\DeclareMathOperator{\lcm}{lcm}
\DeclareMathOperator{\fix}{fix}
\newcommand{\Z}{\mathbb{Z}}
\newcommand{\F}{\mathbb{F}}
\newcommand{\Id}{\mathrm{Id}}
\newcommand\emailfont{\sffamily}
\title{Circular sorting, strong complete mappings and wreath product constructions}
\author[1]{Paul Bastide\thanks{Research supported by ERC Advanced Grant 883810.}}
\author[2]{Anurag Bishnoi}
\author[3]{Carla Groenland\thanks{Research supported by the Dutch Research Council (NWO, VI.Veni.232.073).}}
\author[4]{Dion Gijswijt}
\author[5]{Rohinee Joshi}
\affil[1]{
Mathematical Institute, University of Oxford, UK. Email: {\emailfont{paul.bastide@ens-rennes.fr}}.}
\affil[2,3,4,5]{
Delft Institute of Applied Mathematics, TU Delft, the Netherlands. Emails: 
{\emailfont{a.bishnoi@tudelft.nl}}, {\emailfont{c.e.groenland@tudelft.nl}}, {\emailfont{d.c.gijswijt@tudelft.nl}} and {\emailfont{rohi1729@gmail.com}}.
}
\date{\today}
\begin{document}
\maketitle
\begin{abstract}
    We continue the study of Adin, Alon and Roichman [\textit{arXiv:2502.14398}, 2025] on the number of steps required to sort $n$ labelled points on a circle by transpositions. Imagine that the vertices of a cycle of length $n$ are labelled by the elements $1,\dots,n$. We are allowed to change this labelling by swapping the labels of any two vertices on the cycle. How many swaps are needed to obtain a labelling that has the elements $1,\dots,n$ in clockwise order?

    We provide evidence for their conjecture that at most $n-3$ transpositions are needed to sort a circular permutation when $n$ is not prime. We prove this conjecture when $2\mid n$ or $3\mid n$ or when restricting to permutations given by a polynomial over $\mathbb{Z}_n$. We also provide various algebraic constructions of circular permutations that require many transpositions to sort, most notably providing one that matches our upper bound when $n=3p$ for $p$ an odd prime, and disproving their second conjecture by providing non-affine circular permutations that require $n-2$ transpositions (for $n$ prime). 
    We also improve the lower bounds for some sequences of composite numbers. 
    Finally, we improve the bounds for small $n$ computationally. In particular, we prove a tight upper bound for $n=25$ via an exhaustive computer search using a new connection between this problem and strong complete mappings. 
\end{abstract}

\section{Introduction}
The following problem on permutations was introduced by Adin, Alon and Roichman in \cite{adin2025circularsorting}. Suppose we are given $n$ points arranged on a circle that are bijectively labelled with elements from $[n]=\{1,2,\ldots, n\}$. Consider the smallest number of swaps needed to sort them into clockwise order. Taking the worst case over all labellings, what is the maximum number of swaps needed to sort $n$ labels into clockwise order?

More formally, a \emph{circular permutation} is a left coset $[\pi] \in S_n / \langle c \rangle$, where $c:= (1\,2\,\dots\,n)$ denotes the cyclic shift. For $\pi\in S_n$ we define $t(\pi)$ to be the minimum number of factors in a decomposition of $\pi$ as a product of transpositions. Note that $t(\pi)=n-\cyc(\pi)$, where $\cyc(\pi)$ is the number of cycles (including fixed points) in the cycle decomposition of $\pi$. We are now interested in determining the \emph{circular sorting number} $t(n)$ given by
\[
t(n):=\max_{\pi\in S_n} t([\pi]),\quad \text{where } t([\pi]):=\min_{\sigma\in[\pi]}t(\sigma).
\]
Note that since $\pi c^k$ and $c^k\pi$ have the same cycle type, defining $[\pi]$ to be a right coset instead of a left coset would not change the value of $t([\pi])$ and $t(n)$.

\paragraph{Example.} 
Take $n=6$ and let $\pi=(1\,2\,4\,3\,6)(5)$. Then 
\[
[\pi]=\{(1\,2\,4\,3\,6)(5),\ (1\,4\,5)(2\,6)(3),\ (1\,6\,4)(2\,3\,5),\ (1\,3)(2\,5\,4)(6),\ (1\,5\,6\,3\,2)(4),\ (1)(2)(3\,4\,6\,5)\}.
\]
So we find $t([\pi])=6-3=3$ and hence $t(6)\geq 3$. By exhaustive search over all permutations in $S_6$ or using the fact that $\Z_6$ has no orthomorphism (see Section~\ref{sec:non_prime}), it follows that $t(6)=3$.

\paragraph{}
It will often be convenient to replace the label set $\{1,2,\ldots, n\}$ by other sets of size $n$,  having additional structure. Throughout the paper we will mostly use the cyclic group $\Z_n=\Z/n\Z$ as label set and thus consider permutations as elements of the set $S(\Z_n):=\{f:\Z_n\to\Z_n:\text {$f$ is a bijection}\}$. With this convention, the elements of $[\pi]$ are precisely the maps $x\mapsto \pi(x+b)$ where $b\in \Z_n$. 

\paragraph{Related sorting problems.}
Distances in Cayley graphs of the symmetric group $S_n$ define many classical sorting problems on permutations. Taking adjacent transpositions as generator set, the Cayley distance equals the inversion number, while the set of all transpositions give distance $n-\cyc(\pi)$ and diameter $n-1$. Well-known `recreational' examples concern sorting a bridge hand~\cite{eriksson2001sortingbridge}, flipping a stack of pancakes~\cite{Bulteau15pancakehard,Cibulka11pancake,gyori1978pancakes} and the diameter of the `Rubik's cube graph'~\cite{rokicki2013diameter}.  

In computational biology, cyclically adjacent transpositions $(1\,2),(2\,3),\ldots,(n\,1)$ are used to model rearrangements on a circular chromosome; it is known that at most $n^{2}/4$ moves are required and that optimal sequences can be found in polynomial time~\cite{FCS2010catslowerbound,jerrum1985complexity,vZBSY2016catsupperbound}. Other operations studied in this context are swapping adjacent segments~\cite{Elias05sortingbytranspositions} or reversing the order inside a segment~\cite{bafna1996genomereversals,Christie98reversals,gatesPapadimitriou1979prefix}. 

Although these problems consider generator sets that are invariant under cyclic shifts, the permutations themselves are not considered up to cyclic shifts. In contrast, the circular sorting problem we consider here is equivalent to determining the diameter of the \emph{Schreier coset graph} of $S_n$ with respect to the subgroup $\langle c\rangle$ and the generating set consisting of all transpositions.

\paragraph{Previous results.}
Given $\pi\in S(\Z_n)$, every $x\in \Z_n$ is a fixed point of exactly one of the permutations in the coset $[\pi]$. It follows that for $n\geq 2$ we have $t(n)\leq n-2$. In~\cite{adin2025circularsorting}, the authors show that equality holds when $n$ is a prime number by considering \emph{affine permutations}. These are permutations $\pi:\Z_n\to \Z_n$ of the form $\pi(x)=ax+b$ where $a\in \Z_n^{\times}$ and $b\in \Z_n$. When $n$ is prime and $a$ is a generator of $\Z_n^\times$, the permutation $\pi$ satisfies $t([\pi])=n-2$. They conjectured that the upper bound $t(n)\leq n-2$ is an equality only if $n$ is prime.

\begin{conj}[{\cite[Conj.~3.4]{adin2025circularsorting}}]\label{conj:non-prime}
If $n$ is composite, then $t(n) \leq n - 3$.
\end{conj}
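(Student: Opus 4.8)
The plan is to squeeze out of the hypothesis $t([\pi])=n-2$ enough structure on $\pi$ to contradict it once $n$ is composite. Work in $S(\Z_n)$, write $\tau_b\colon x\mapsto x+b$ so that $[\pi]=\{\pi\tau_b:b\in\Z_n\}$ and $t([\pi])=n-\max_b\cyc(\pi\tau_b)$, and set $\phi_\pi(y):=\pi(y)-y$. A one-line computation gives $\pi(x+b)=x\iff\phi_\pi(x+b)=-b$, so $\pi\tau_b$ has exactly $|\phi_\pi^{-1}(-b)|$ fixed points and $\sum_b|\phi_\pi^{-1}(-b)|=n$; since each of the $n$ elements is fixed by exactly one representative, a fibre of size $\ge 2$ yields a representative with $\ge 2$ fixed points, hence $\ge 3$ cycles (for $n\ge 3$), so $t([\pi])\le n-3$. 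Thus $t([\pi])=n-2$ forces every $\pi\tau_b$ to have cycle type $(1,n-1)$; in particular $\phi_\pi=\pi-\mathrm{id}$ is a bijection, i.e.\ $\pi$ is an orthomorphism of $\Z_n$. This already settles the even case: if $\pi-\mathrm{id}$ is a bijection then $\sum_x(\pi(x)-x)$ is the sum of a complete residue system, $\tfrac{n(n-1)}{2}$, and it also equals $\sum_x\pi(x)-\sum_x x=0$, so $\tfrac{n(n-1)}{2}\equiv 0\pmod n$, which fails for even $n$; hence $\Z_n$ has no orthomorphism and $t(n)\le n-3$.

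For odd $n$ orthomorphisms exist (e.g.\ $x\mapsto 2x$), so one must use more. The extra leverage is that the type $(1,n-1)$ contains no $2$-cycle once $n\ge 4$, together with the observation that some $\pi\tau_b$ contains a $2$-cycle precisely when $x\mapsto\pi(x)+x$ fails to be injective: if $\pi(x+b)=y$ and $\pi(y+b)=x$ with $x\ne y$ then, putting $u=x+b$ and $v=y+b$, one gets $\pi(u)+u=\pi(v)+v$ with $u\ne v$, and this chain of implications reverses. Hence $t([\pi])=n-2$ forces $\pi$, $x\mapsto\pi(x)+x$ and $x\mapsto\pi(x)-x$ all to be bijections of $\Z_n$ --- that is, $\pi$ is (essentially) a \emph{strong complete mapping}. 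Comparing sums of \emph{squares} now kills the case $3\mid n$ (we may assume $n$ odd): with $P=\sum_x x\pi(x)$ and $Q=\sum_x x^2$, bijectivity of $\pi-\mathrm{id}$ gives $2P\equiv Q$ and bijectivity of $\pi+\mathrm{id}$ gives $2P\equiv-Q\pmod n$, whence $2Q\equiv 0\pmod n$; but for odd $n$ divisible by $3$ a direct computation gives $2Q\equiv n/3\not\equiv 0\pmod n$. So $\Z_n$ has no strong complete mapping when $2\mid n$ or $3\mid n$, giving $t(n)\le n-3$ in those cases. When $\pi$ is given by a polynomial over $\Z_n$, the Chinese Remainder Theorem reduces matters to prime-power moduli, where Hermite-type criteria for permutation polynomials and degree bookkeeping should be strong enough to exclude ``every shift of type $(1,n-1)$'' for composite $n$.

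The main obstacle is the remaining composite case with $\gcd(n,6)=1$: there the strong-complete-mapping obstruction evaporates, since such $\Z_n$ \emph{do} admit strong complete mappings, and when $n\ge 5$ is prime there are even strong complete mappings all of whose shifts have type $(1,n-1)$ (for instance $x\mapsto gx$ with $g$ a primitive root modulo $n$). Proving Conjecture~\ref{conj:non-prime} in this range therefore requires genuinely using the full ``each shift is a fixed point together with an $(n-1)$-cycle'' condition --- equivalently, controlling the joint cycle structure of all $n$ translates of a single permutation of $\Z_n$ --- and the counting identities one naturally writes down (such as $\sum_b\fix((\pi\tau_b)^k)=n$ whenever $(n-1)\nmid k$, rewritten through $\phi_\pi$) do not distinguish prime from composite $n$. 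A complete proof seems to need either a new invariant separating the two cases or a reduction to prime-power $n$, which is exactly why the statement is, for now, only conjectural.
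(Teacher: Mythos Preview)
The statement is a \emph{conjecture} in the paper, not a theorem; the paper does not prove it in full, and neither do you --- and you correctly say so in your final paragraph. What you \emph{do} prove, namely $t(n)\le n-3$ whenever $2\mid n$ or $3\mid n$, is exactly the content of the paper's Theorem~\ref{thm:div_by_2_or_3}, and your route is essentially identical to the paper's: from $t([\pi])=n-2$ you deduce that every cyclic shift has type $(1,n-1)$ (the paper's Lemma~\ref{lem:n-2}), then that $\pi-\Id$ and $\pi+\Id$ are both bijections (the paper's Lemmas~\ref{lem:orthomorphism_doublefixed} and~\ref{lem:completemapping_transposition}), i.e.\ $\pi$ is a strong complete mapping, and finally you give the linear-sum and quadratic-sum obstructions that rule these out for $2\mid n$ and $3\mid n$. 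The paper packages this last step as P\'olya's Proposition~\ref{Polya} and relegates your sums-of-squares computation to the appendix, but the mathematics is the same. Your diagnosis of the obstacle when $\gcd(n,6)=1$ also matches the paper's: strong complete mappings exist there, so a new idea is needed.

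One minor discrepancy: your sketch for the permutation-polynomial case (``CRT plus Hermite-type criteria and degree bookkeeping'') is not how the paper actually handles Proposition~\ref{prop:permpoly}. The CRT reduction is indeed used (Lemma~\ref{lem:product_cpoly}), but the prime-power step is done not via Hermite criteria but by showing that any permutation polynomial over $\Z_{mn}$ has the wreath-product form $\sigma(x,y)=(\pi(x),\pi_x(y))$ and then invoking Lemma~\ref{lem:semi_direct_limitation_3cycles} to force a shift with at least three cycles. Your suggested approach is not obviously wrong, but it is not fleshed out, and the paper's structural argument is cleaner.
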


Moreover, they conjectured that for $n$ prime, equality is only obtained by affine permutations.
\begin{conj}[{\cite[Conj.~3.6]{adin2025circularsorting}}]\label{conj:affine_prime}
If $\pi \in S(\Z_n)$ satisfies $t([\pi]) = n-2$, then $n$ is prime and $\pi$ is affine.
\end{conj}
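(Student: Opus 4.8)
The statement splits into two claims: (i) $t([\pi])=n-2$ forces $n$ to be prime, and (ii) when $n$ is prime, $t([\pi])=n-2$ forces $\pi$ to be affine. Claim~(i) is exactly the contrapositive of Conjecture~\ref{conj:non-prime}, hence open in the same generality, so I would not attack it head on. Instead I would start from the reformulation that makes both claims concrete. Since $t(\sigma)=n-\cyc(\sigma)$ and $[\pi]=\{x\mapsto\pi(x+b):b\in\Z_n\}$, we have $t([\pi])=n-\max_b\cyc(\pi(\cdot+b))$, so $t([\pi])=n-2$ is equivalent to $\cyc(\pi(x+b))\le 2$ for \emph{every} $b\in\Z_n$. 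Double-counting the pairs $(x,b)$ with $\pi(x+b)=x$ gives $\sum_b\#\{x:\pi(x+b)=x\}=n$; but a permutation of a set of size $n\ge 3$ with at most two cycles has at most one fixed point, so each summand is $0$ or $1$ and all $n$ of them must equal $1$. Thus, in the extremal case, $x\mapsto\pi(x)-x$ is a bijection --- i.e.\ $\pi$ is an \emph{orthomorphism} of $\Z_n$ --- and every shift $\pi(\cdot+b)$ consists of a single fixed point together with one $(n-1)$-cycle; conversely any such orthomorphism is extremal. Since $\Z_n$ admits an orthomorphism only for $n$ odd, claim~(i) follows immediately for even $n$, while the odd composite case coincides with Conjecture~\ref{conj:non-prime} and I would leave it open there.

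For claim~(ii), set $n=p$ prime and let $\pi$ be an orthomorphism of $\F_p$ all of whose additive shifts are a fixed point plus a $(p-1)$-cycle; the goal is to conclude $\pi(x)=ax+b$. Viewing $\pi$ as a permutation polynomial over $\F_p$, ``affine'' means $\deg\pi=1$, so the plan is to show that $\deg\pi\ge 2$ forces some shift to split off an extra cycle. The transitivity requirement --- for each $b$, $\pi(\cdot+b)$ acts as a single $(p-1)$-cycle away from its one fixed point --- says that for every proper divisor $d$ of $p-1$ the $d$-th iterate of $\pi(\cdot+b)$ fixes nothing beyond that fixed point; expressing iteration and fixed-point counts as polynomial conditions, and imposing them simultaneously over all $b$ and all such $d$, one would hope to drive $\deg\pi$ down to $1$. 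The affine examples fit cleanly --- when $a\ne 1$ the map $x\mapsto ax+b-x$ is already a bijection and multiplication by a primitive root is a $(p-1)$-cycle --- so the target statement is: the only orthomorphisms of $\F_p$ whose shifts are all near-$p$-cycles are the linear maps $x\mapsto ax$, and their translates, with $a$ a primitive root.

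The step I expect to be the genuine obstacle is exactly this classification. Controlling the \emph{cycle structure} of a permutation polynomial is notoriously resistant to algebraic methods, and a constraint of this shape is precisely the kind that can hide sporadic non-affine solutions. So before committing to the route above I would test the conjecture computationally: for small primes $p$, enumerate all $\pi\in S(\F_p)$ with $\cyc(\pi(\cdot+b))\le 2$ for every $b$ --- equivalently, the orthomorphisms all of whose shifts are near-$p$-cycles --- and check whether any fails to be affine. A single such $\pi$ disproves the conjecture, and the right follow-up is then to promote the sporadic example to an infinite family (the paper's title suggests a wreath-product construction) rather than to prove Conjecture~\ref{conj:affine_prime}; this is the outcome announced in the abstract. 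Were the small-$p$ searches instead clean, that would be the cue to push hard on the orthomorphism classification above.
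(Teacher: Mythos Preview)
Your overall plan---reformulate $t([\pi])=n-2$ as ``every shift has cycle type $(1,n-1)$'', extract a structural necessary condition, then search computationally for non-affine witnesses---is exactly what the paper does, and the conjecture is indeed false. Two points where your proposal diverges from the paper are worth flagging.

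First, you stop at the orthomorphism condition ($\pi-\Id$ bijective). The paper goes one step further: since for $n\ge 4$ a permutation of cycle type $(1,n-1)$ has no $2$-cycle, the same double-counting argument applied to pairs $\{x,y\}$ with $\pi(x+b)=y$, $\pi(y+b)=x$ shows that $\pi+\Id$ is also a bijection. Thus $\pi$ must be a \emph{strong complete mapping} (equivalently, a toroidal $n$-queens solution). This is not a cosmetic refinement: for $n=23$ there are $22!$ normalized permutations but only about $2\cdot 10^7$ strong complete mappings, and that reduction is what makes the exhaustive search feasible. Without it your ``small primes'' search would likely stall before reaching the first counterexample, which occurs at $p=23$.

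Second, your guess that the counterexamples come from a wreath-product construction is off target. The wreath product in the paper is used for \emph{lower bounds on $t(n)$ for composite $n$} and, as the paper itself shows, cannot produce permutations with $t([\sigma])=nm-2$. The actual non-affine witnesses are \emph{quadratic orthomorphisms}
\[
[a,b](x)=\tfrac{a-b}{2}\,x^{(p+1)/2}+\tfrac{a+b}{2}\,x,
\]
i.e.\ $x\mapsto ax$ on squares and $x\mapsto bx$ on non-squares; for $p=23$ the paper verifies that \emph{every} counterexample is of this form (up to pre- and post-composition with translations). So the follow-up problem is not to build a wreath-product family but to decide for which primes $p$ and which $(a,b)$ the map $[a,b]$ has all shifts of cycle type $(1,p-1)$---a question the paper leaves open.
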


For general $n\geq 2$ and odd primes $p$, they proved the following lower bounds by analysing the cycle structure of affine maps in $S(\Z_n)$:
\[
t(p^k) \geq p^k - k - 1, \qquad
t(2p^k) \geq 2p^k - 2k - 2, \qquad
t(2^k) \geq 2^k - 2k + 1.
\]
Using probabilistic arguments they also derived the general lower bound $t(n) \ge n - e(\ln n + 1)$.

\paragraph{Results and paper outline.}
In Section~\ref{sec:non_prime}, we establish a relation with orthomorphisms and strong complete mappings and provide evidence for Conjecture~\ref{conj:non-prime}.
\begin{thm}\label{thm:div_by_2_or_3}
  Let $n\geq 4$. Then $t(n)\leq n-3$ if $n$ is divisible by $2$ or $3$.
\end{thm}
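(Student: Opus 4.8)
The plan is to show that for every $\pi\in S(\Z_n)$, the coset $[\pi]$ contains a permutation with at least three cycles, which immediately gives $t([\pi])=n-\cyc\le n-3$. Recall that $[\pi]=\{x\mapsto\pi(x+b):b\in\Z_n\}$, so fixing $\pi$ we want to choose a shift $b$ so that the map $\sigma_b\colon x\mapsto\pi(x+b)$ has many cycles. The first observation is that the fixed points of $\sigma_b$ are exactly the solutions $x$ of $\pi(x+b)=x$, i.e.\ $b=\pi^{-1}(x)-x$; equivalently, $\cyc(\sigma_b)$ gains one fixed point for each $x$ with $\pi^{-1}(x)-x=b$. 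Summing over all $b$, the multiset $\{\pi^{-1}(x)-x:x\in\Z_n\}$ has $n$ elements in $\Z_n$, so by averaging there is a value $b_0$ hit at least twice when the map $x\mapsto\pi^{-1}(x)-x$ is not injective, giving $\sigma_{b_0}$ at least two fixed points. The content of the theorem is to boost this to \emph{three} cycles, using the divisibility of $n$.

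The natural dichotomy is: either the map $g\colon x\mapsto\pi^{-1}(x)-x$ is far from injective — some fibre has size $\ge 3$, or two different fibres have size $\ge 2$ — in which case some $\sigma_b$ already has $\ge 3$ fixed points (hence $\ge 3$ cycles), or $g$ is injective or nearly so. If $g$ is a bijection, then $\pi^{-1}$ is an \emph{orthomorphism} of $\Z_n$ (equivalently $\pi$ corresponds to one), and it is a classical fact (Hall--Paige / the Euler--Niven result on complete mappings) that $\Z_n$ admits no orthomorphism when $n$ is even. So for $n$ even, $g$ cannot be a bijection: some value $b_0$ is taken at least twice, giving $\sigma_{b_0}$ two fixed points, say $x_1,x_2$. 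Now I would delete $x_1,x_2$ and analyze the induced permutation on the remaining $n-2$ points: I want to argue that among the shifts of $\pi$ that fix both $x_1$ and $x_2$ — but there is only one such shift — so instead I would look at whether that one $\sigma_{b_0}$ has a third fixed point or a short cycle. If not, one repeats a counting/parity argument on $\Z_n\setminus\{x_1,x_2\}$ or uses a second application of the non-existence of orthomorphisms on a smaller structure. The cleanest route is probably: if $g$ is not injective we get $\sigma_{b_0}$ with two fixed points $x_1,x_2$; consider the permutation $\tau=\sigma_{b_0}$ restricted to $\Z_n\setminus\{x_1,x_2\}$ and ask whether \emph{some other} element of the coset restricted appropriately, or a local swap, yields a third fixed point; the parity/counting obstruction again forces this when $2\mid n$ or $3\mid n$.

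For the case $3\mid n$ the relevant input is a refinement: either $\Z_n$ has no orthomorphism (true also when $\gcd(n,6)>1$ in the cyclic case? — no, $\Z_3$ does have one), so here one cannot simply invoke non-existence. Instead, when $3\mid n$ I would use that $\Z_n$ has no \emph{strong complete mapping}: a strong complete mapping is a bijection $\theta$ such that both $x\mapsto\theta(x)-x$ and $x\mapsto\theta(x)+x$ are bijections, and these are known not to exist on $\Z_n$ when $3\mid n$ (a result going back to work on complete mappings). A strong complete mapping is exactly what would let one simultaneously control two fixed points via the two natural families $x\mapsto\pi(x+b)$ and... — more precisely, the second family comes from composing with the reversal $x\mapsto -x$, which also preserves cyclic structure on a cycle. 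So the plan is: if for \emph{both} $\pi$ and its reversal the difference map is injective, then $\pi^{-1}$ is a strong complete mapping of $\Z_n$, contradicting $3\mid n$; hence one of the two difference maps is non-injective, producing two fixed points for some coset representative in one of the two families, and then a short further argument (again counting the multiset of differences, now of size $n$ with a repeated value already used up) yields a third cycle.

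The main obstacle I expect is the last step in each case: going from ``two fixed points'' to ``three cycles.'' Having two fixed points only says $\cyc\ge 2$ for that single representative, and there is exactly one representative fixing a given pair, so I cannot iterate the averaging trick naively. The fix should be to work on the induced permutation on the remaining $n-2$ points and show it is not an $(n-2)$-cycle — using that it lies in a structured family where an orthomorphism-type obstruction still applies, or by a direct parity argument: an $n$-cycle and an $(n-2)$-cycle plus two fixed points have different parities when $n$ is even, which lets one rule out the ``bad'' configuration by comparing with the sign of $\pi$ (all elements of $[\pi]$ have the same sign, since $c$ is an $n$-cycle only when... — careful, $c$ is odd iff $n$ is even). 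Making this parity bookkeeping precise, and handling the $3\mid n$ case where parity alone does not suffice and one genuinely needs the strong-complete-mapping non-existence theorem, is where the real work lies.
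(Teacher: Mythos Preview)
Your proposal assembles the right ingredients --- orthomorphisms, strong complete mappings, and the classical nonexistence results --- but you overcomplicate the assembly and make one outright error that causes you to miss the finish line.

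The error is the claim that ``having two fixed points only says $\cyc\ge 2$''. Two fixed points are two cycles of length~$1$; since $n\ge 4$, the remaining $n-2\ge 2$ elements contribute at least one further cycle, so two fixed points \emph{immediately} gives $\cyc\ge 3$. Your ``main obstacle'' is no obstacle, and your even case is already complete: $2\mid n$ rules out orthomorphisms of $\Z_n$, so some shift has two fixed points, hence three cycles. All of your parity bookkeeping is unnecessary.

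For $3\mid n$ your ``reversal'' idea gestures at the right object but never pins it down. The paper argues by contrapositive, and this is much cleaner. Assume $t([\pi])=n-2$. Then every cyclic shift has cycle type $(1,n-1)$: each $x\in\Z_n$ is a fixed point of exactly one shift, and no shift may have three or more cycles, so each shift has exactly one fixed point and a single $(n-1)$-cycle. From this, no shift has two fixed points, so $\pi-\Id$ is a bijection (orthomorphism); and no shift contains a transposition (as $n-1\ge 3$), which --- and this is the lemma you are missing --- is \emph{equivalent} to $\pi+\Id$ being a bijection (complete mapping). Hence $\pi$ is a strong complete mapping, and P\'olya's theorem (no strong complete mapping of $\Z_n$ when $\gcd(n,6)>1$) gives a uniform contradiction covering both $2\mid n$ and $3\mid n$ at once.

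If you prefer your direct route, it also closes once you have the transposition lemma: when $\pi$ is an orthomorphism but not a complete mapping, some shift has a $2$-cycle; but since $\pi$ is an orthomorphism every shift has exactly one fixed point, so that shift carries a fixed point, a transposition, and at least one more cycle on the remaining $n-3\ge 1$ elements --- three cycles again.
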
 

We also  disprove Conjecture~\ref{conj:affine_prime} by constructing non-affine permutations $\pi$ with $t([\pi]) = n-2$ for several primes, the smallest being $n=23$. Our counterexamples arise from quadratic orthomorphisms~\cite{evans1987orthomorphisms} and we find that for $n=23$ all counterexamples are of this form (see Proposition~\ref{prop:quadratic}).

In Section~\ref{sec:affine}, we analyse the cycle structure of affine maps $\pi\in S(\Z_n)$ and
show that the maximum value of $t([\pi])$ for such affine permutations is $n-\sum_{d\mid n}\frac{\varphi(d)}{\lambda(d)}$, for $\lambda(d)$ the maximum order of an element of $\mathbb{Z}_d^\times$  and $\varphi(d)= |\Z_d^\times |$ (see Theorem~\ref{thm:affine_general}). 

In Section~\ref{sec:semi-direct}, we describe a construction of elements in the wreath product $S(\Z_n)\wr S(\Z_m)$ that yield new lower bounds:
\begin{thm}\label{thm:lower_bounds}
\begin{itemize}
\item[\textup{(i)}] Let $n,m\geq 2$ be integers. Then $t(nm)\geq nt(m)+t(n)$.
\item[\textup{(ii)}] Let $p,q\geq 3$ be primes. Then $t(pq)\geq pq-5$ and $t(pq)\geq pq-3$ if moreover $q-1\mid p-1$.
\end{itemize}
\end{thm}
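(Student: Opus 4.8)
The plan is to realise the relevant permutations inside the wreath product $S(\Z_n)\wr S(\Z_m)$ through a block decomposition of the cyclic label set, and to count cycles of their shifts with a ``holonomy'' formula. Concretely, I would partition $\Z_{nm}$ into the $m$ cosets $B_i:=i+m\Z_{nm}$ ($i\in\Z_m$) of the order-$n$ subgroup $m\Z_{nm}$, identify each $B_i$ with $\Z_n$ via $i+jm\leftrightarrow j$, and call a permutation of $\Z_{nm}$ a \emph{block map} if it permutes the $B_i$; such a map $\Phi$ is encoded by its induced permutation $\bar\Phi\in S(\Z_m)$ of the blocks together with the transition bijections $\tau_i\in S(\Z_n)$ from $B_i$ to $B_{\bar\Phi(i)}$ (read in $\Z_n$-coordinates), i.e.\ precisely as an element of $S(\Z_n)\wr S(\Z_m)$. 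The first thing to record is that the cyclic shift $c$ is itself a block map: it sends $B_i$ to $B_{i+1}$, so $\bar c=c_m$ and every transition map of $c$ is trivial except the one $B_{m-1}\to B_0$, which is $x\mapsto x+1$; and $c^m$ has $\overline{c^m}=\Id$ with every transition map equal to $x\mapsto x+1$. Hence for any $\Pi\in S(\Z_n)\wr S(\Z_m)$ every shift $\Pi c^k$ is again a block map, with block permutation $\bar\Pi c_m^k$ and transition maps those of $\Pi$ composed with translations of $\Z_n$. The second ingredient is the holonomy formula
\[
\cyc(\Phi)=\sum_{\gamma}\cyc(g_\gamma),
\]
the sum over cycles $\gamma=(i_0\,\cdots\,i_{\ell-1})$ of $\bar\Phi$, where $g_\gamma:=\tau_{i_{\ell-1}}\circ\cdots\circ\tau_{i_0}\in S(\Z_n)$; it holds because each $\ell'$-cycle of $g_\gamma$ lifts to an $\ell\ell'$-cycle of $\Phi$. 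Since $t([\Pi])=nm-\max_k\cyc(\Pi c^k)$, both parts reduce to engineering a $\Pi$ all of whose shifts have few cycles.

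For part (i) I would take the block permutation to be an optimal $\rho\in S(\Z_m)$ (so $t([\rho])=t(m)$), the transition map at one chosen block $i^\ast$ to be an optimal $\pi\in S(\Z_n)$ (so $t([\pi])=t(n)$), and every other transition map to be the identity. For each $k$ the block permutation of $\Pi c^k$ is $\rho c_m^k\in[\rho]$, which has at most $m-t(m)$ cycles; tracking where $i^\ast$ appears among the transition maps shows that exactly one of these cycles has a holonomy equal to $\pi$ sandwiched between two translations (hence conjugate to some $x\mapsto\pi(x+d)$, so with at most $n-t(n)$ cycles), while every other holonomy is a pure translation of $\Z_n$ and has at most $n$ cycles. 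Adding up, $\cyc(\Pi c^k)\le (n-t(n))+(m-t(m)-1)n=nm-n\,t(m)-t(n)$, i.e.\ $t(nm)\ge n\,t(m)+t(n)$. I expect this part to be routine once the framework above is set up.

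For part (ii), with $\{n,m\}=\{p,q\}$, I would keep the transition maps affine, $\tau_i(x)=\mu_i x+d_i$ with $\mu_i$ a unit, and take the block permutation to be affine with a primitive-root multiplier, so that --- using that $p$ and $q$ are prime --- every one of its shifts has exactly two cycles, of lengths $1$ and $(\#\text{blocks})-1$. The holonomy formula then gives $\cyc(\Pi c^k)=\cyc(g_{\gamma_1})+\cyc(g_{\gamma_2})$, where $g_{\gamma_1}$ is affine with multiplier $\mu_i$ (with $i$ the unique block fixed by the shifted block permutation, which runs over all blocks as $k$ varies) and $g_{\gamma_2}$ is affine with multiplier $\prod_{j\ne i}\mu_j$; recalling that an affine map $x\mapsto\mu x+\nu$ on $\Z_r$ ($r$ prime) contributes $1+(r-1)/\ord(\mu)$ cycles when $\mu\ne1$ and $1$ cycle when $\mu=1$ and $\nu$ is a unit, the job is to keep both of these multipliers of large order (and, when a multiplier is $1$, the corresponding constant a unit). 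When $q-1\mid p-1$ I would use $p$ blocks of size $q$ and set all $\mu_i$ equal to a fixed primitive root $h$ of $\Z_q^\times$: then $g_{\gamma_2}$ has multiplier $h^{p-1}=1$ and $g_{\gamma_1}$ contributes exactly $2$, so it only remains to choose the additive parts $(d_i)$ so that the translation constant of $g_{\gamma_2}$ is a unit of $\Z_q$ simultaneously for all $k$; this makes $\cyc(\Pi c^k)=3$ throughout and gives $t(pq)\ge pq-3$. For the general bound $t(pq)\ge pq-5$ I would invoke part (i) directly when $3\in\{p,q\}$ (it yields $t(3q)\ge 3t(q)+t(3)=3q-5$), and otherwise choose the $\mu_i$ not all equal, each still a primitive root but with each product $\prod_{j\ne i}\mu_j$ of order at least half the block size, which an elementary congruence argument (working prime-by-prime in the exponent) can supply; then $\cyc(\Pi c^k)\le 2+3=5$ for all $k$.

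The main obstacle throughout part (ii) is precisely this last step: since $\#\text{blocks}$ is odd, $\prod_{j\ne i}\mu_j$ is a product of an even number of primitive roots and is therefore forced to have order divisible by $2$, and finer congruence obstructions (e.g.\ modulo $4$) can occur; what makes everything work is ensuring that no holonomy degenerates into ``the identity plus a non-unit translation'' (which would inflate its contribution from $\le 3$ all the way up to the block size), simultaneously over all $pq$ shifts. This is where the hypotheses that $p,q$ are prime, and that $q-1\mid p-1$ in the sharper bound, are used, and it is the only genuinely delicate part of the argument.
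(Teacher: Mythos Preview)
Your framework and part~(i) are correct and coincide with the paper's: the wreath product encoding, the holonomy formula (the paper's Lemma~\ref{lem:lift}), and the construction with one distinguished transition map are exactly how the paper proves $t(nm)\ge nt(m)+t(n)$.

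For part~(ii), however, both halves have genuine gaps.

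\textbf{The bound $t(pq)\ge pq-5$.} Your scheme of taking every $\mu_i$ to be a primitive root of $\Z_n^\times$ and arranging each leave-one-out product $\prod_{j\ne i}\mu_j$ to have order at least $(n-1)/2$ does not always succeed. Writing $\mu_i=h^{e_i}$ with $h$ a primitive root, you need $\gcd(n-1,e-e_i)\le 2$ for all $i$ while every $e_i$ is odd; but $e-e_i$ is a sum of an even number of odd integers, hence even, so the condition becomes $e-e_i\equiv 2\pmod 4$. This forces all $e_i$ to agree modulo~$4$, whence $e\equiv m\,e_i\pmod 4$ and $e-e_i\equiv (m-1)e_i\pmod 4$; when $m\equiv 1\pmod 4$ this gives $e-e_i\equiv 0\pmod 4$, a contradiction. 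Since both $5$ and $13$ are $\equiv 1\pmod 4$, neither ordering of blocks rescues the pair $\{p,q\}=\{5,13\}$. The paper sidesteps this by \emph{not} insisting that every $\mu_i$ be a primitive root: it takes the exponents $e_x\in\{1,2\}$ and chooses how many of each so that $e\equiv 3\pmod{q-1}$ (which is possible whenever $p\ge q$). Then $\gcd(q-1,e_x)+\gcd(q-1,e-e_x)=1+2$ or $2+1$ in every case, giving $\cyc\le 5$ uniformly (Proposition~\ref{prop:pq-5}). The point is to balance the two holonomy contributions rather than minimise one of them.

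\textbf{The bound $t(pq)\ge pq-3$ when $q-1\mid p-1$.} Your outline (all $\mu_i$ equal to a primitive root $h$, so the long holonomy has multiplier $h^{p-1}=1$) is exactly the paper's, but the sentence ``it only remains to choose the additive parts $(d_i)$ so that the translation constant of $g_{\gamma_2}$ is a unit of $\Z_q$ simultaneously for all $k$'' is where the entire content lies, and you do not address it. Unravelling the shifts, this amounts to $p$ simultaneous \emph{non}-equalities of the form $\sum_{i=1}^{p-1} h^{-i}\,d_{a^i+s}\neq v_s$ over $\Z_q$, one for each $s\in\Z_p$; the coefficient matrix is a specific $p\times p$ circulant over $\F_q$. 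The paper proves (Lemma~\ref{lem:circulant}) that this circulant has rank $p-1$, from which a short counting/reduction argument shows that the $p$ affine hyperplanes in $\F_q^{\,p}$ do not cover the whole space, so a good choice of $(d_i)$ exists. This rank computation is not routine and is the crux of Proposition~\ref{prop:q-1dividesp-1}; without it the proposal for the $pq-3$ bound is incomplete.
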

In the special case $q=3$, combined with Theorem~\ref{thm:div_by_2_or_3} this gives the following tight result.
\begin{cor}\label{cor:3p-3}
    For $p$ prime, $t(3p)=3p-3$.
\end{cor}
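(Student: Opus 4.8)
The plan is to read this off as a two-sided estimate whose two halves are already available: the upper bound is an instance of Theorem~\ref{thm:div_by_2_or_3}, and the matching lower bound is the $q=3$ specialisation of Theorem~\ref{thm:lower_bounds}(ii). So the corollary is essentially a bookkeeping step, and the only care needed is with the small prime $p=2$, which the worked example in the introduction already settles.

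For the upper bound I would simply observe that $n:=3p$ is divisible by $3$ and satisfies $n=3p\ge 6\ge 4$ for every prime $p$, so Theorem~\ref{thm:div_by_2_or_3} applies verbatim and yields $t(3p)\le 3p-3$. For the lower bound I would first note that $p=2$ gives $n=6$, and the example computed in the introduction shows $t(6)\ge 3=3\cdot 2-3$. For an odd prime $p$, I would invoke Theorem~\ref{thm:lower_bounds}(ii) with $q=3$: both $p$ and $q$ are primes that are at least $3$, and the hypothesis $q-1\mid p-1$ becomes $2\mid p-1$, which holds precisely because $p$ is odd. Hence $t(3p)\ge 3p-3$ in this case as well. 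Combining the two bounds gives $t(3p)=3p-3$ for every prime $p$.

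There is no real obstacle here beyond this assembly; all the substance lives in Theorem~\ref{thm:div_by_2_or_3} (whose proof uses the orthomorphism/strong complete mapping connection) and in the wreath product construction behind Theorem~\ref{thm:lower_bounds}(ii). The one point to keep an eye on is that part~(ii) of Theorem~\ref{thm:lower_bounds} is stated only for odd primes, so the degenerate case $p=2$ must be excluded from that appeal and handled by the explicit example instead; note also that the weaker bound $t(3p)\ge 3t(p)+t(3)=3(p-2)+1=3p-5$ coming from Theorem~\ref{thm:lower_bounds}(i) alone would not suffice, which is exactly why the divisibility clause $q-1\mid p-1$ is needed.
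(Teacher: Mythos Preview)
Your proposal is correct and follows the same route as the paper: upper bound from Theorem~\ref{thm:div_by_2_or_3} (divisibility by $3$) and lower bound from the $q=3$ case of Theorem~\ref{thm:lower_bounds}(ii). You are in fact slightly more careful than the paper, which glosses over the case $p=2$; your explicit appeal to the worked example $t(6)=3$ cleanly closes that gap.
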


In Section~\ref{sec:permpolys}, we provide further evidence towards Conjecture~\ref{conj:non-prime}.
\begin{prop}
\label{prop:permpoly}
   $t([\pi])\leq n-3$ when $n$ is not prime and $\pi\in S_n$ can be expressed as a permutation polynomial with coefficients in $\Z_n$. 
\end{prop}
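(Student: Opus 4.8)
\paragraph{Proof proposal.}
The plan is to reduce everything to exhibiting a single coset representative with at least three cycles, using $t([\pi])=n-\max_{b\in\Z_n}\cyc\bigl(x\mapsto\pi(x+b)\bigr)$, and then to split on the number of distinct prime factors of $n$ and (in the prime-power case) on whether $\pi$ is an orthomorphism of $\Z_n$, i.e.\ whether $x\mapsto\pi(x)-x$ is a bijection. The only place where "permutation polynomial'' is used in an essential way beyond the Chinese Remainder Theorem is the prime-power orthomorphism case, where $p$-adic Taylor expansion controls $\pi$ on a small subgroup.

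First I would treat the case where $n$ has at least two distinct prime factors, say $n=n_1n_2$ with $n_1,n_2\ge 2$ coprime. Writing $\pi$ as a polynomial and applying CRT, $\pi$ becomes a product $\pi_1\times\pi_2$ acting coordinatewise on $\Z_{n_1}\times\Z_{n_2}$, with each $\pi_i$ a permutation of $\Z_{n_i}$. Since the average number of fixed points over the $n_i$ representatives of $\pi_i$ is $1$, one can pick $b_i$ making $x\mapsto\pi_i(x+b_i)$ have a fixed point, hence at least two cycles. As the number of cycles of a product permutation $\sigma\times\tau$ equals $\sum_{C,C'}\gcd(|C|,|C'|)$ over cycles $C$ of $\sigma$ and $C'$ of $\tau$, it is at least $\cyc(\sigma)\cyc(\tau)\ge 4$, so $t([\pi])\le n-4$.

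It then remains to handle $n=p^k$ with $k\ge 2$. If $\pi$ is not an orthomorphism, some value $c$ has at least two preimages under $x\mapsto\pi(x)-x$, so the representative $x\mapsto\pi(x-c)$ has at least two fixed points; since $p^k\ge 4$ the remaining points give at least one more cycle, and $\cyc\ge 3$. (This disposes of $p=2$, as $\Z_{2^k}$ has no orthomorphism.) If $\pi$ is an orthomorphism then $p$ is odd, $p^k\ge 9$, and $g:=\pi-\Id$ is a polynomial bijection of $\Z_{p^k}$; a one-line Taylor argument ($g'(a)\equiv 0\pmod p\Rightarrow g(a+p^{k-1})\equiv g(a)\pmod{p^k}$, impossible since $k\ge 2$) shows $\pi'(x)-1\not\equiv 0$ and $\pi'(x)\not\equiv 0\pmod p$ for all $x$. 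Moreover $\pi$ has a unique fixed point $z$; conjugating by $x\mapsto x+z$ replaces $\pi$ by $\tau(h)=\pi(z+h)-z$ with $\tau(0)=0$ and $\cyc(\tau)=\cyc(\pi)$, and Taylor expansion gives $\tau(jp^{k-1})\equiv\pi'(z)\,jp^{k-1}\pmod{p^k}$. Hence $\tau$ preserves the order-$p$ subgroup $S=p^{k-1}\Z_{p^k}$, acting there (via $jp^{k-1}\leftrightarrow j$) as multiplication by $\pi'(z)\bmod p$, an element of $\Z_p^{\times}\setminus\{1\}$: this has a fixed point and a further cycle on $S$, while $\tau$ also permutes the nonempty complement $\Z_{p^k}\setminus S$ in at least one cycle. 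So $\cyc(\pi)=\cyc(\tau)\ge 3$ and $t([\pi])\le p^k-3$.

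The main obstacle I anticipate is precisely the prime-power orthomorphism sub-case: one must convert "$\pi$ is a permutation polynomial'' into usable cycle-structure information, and the crux is that such $\pi$ on $\Z_{p^k}$ ($k\ge2$) has nowhere-vanishing derivative mod $p$ and, recentred at its fixed point, acts on the top layer $p^{k-1}\Z_{p^k}$ by a nontrivial multiplication. Pinning down that layer action — in particular checking the relevant quantities depend only on residues mod $p$ and that the error terms in $\tau(jp^{k-1})$ vanish mod $p^k$ — is the delicate $p$-adic bookkeeping; once that is in place the rest is elementary cycle counting.
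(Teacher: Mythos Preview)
Your proof is correct. For the case where $n$ has at least two distinct prime factors, both you and the paper argue via the Chinese Remainder Theorem (this is the paper's Lemma~\ref{lem:product_cpoly}). For prime powers $n=p^k$, however, the approaches diverge. The paper shows more generally (Lemma~\ref{lem:semi_product_cpoly}) that every permutation polynomial over $\Z_{mn}$ can be written in the wreath-product form $\sigma(r,q)=(\pi(r),\pi_r(q))$ of Section~\ref{sec:semi-direct}, with each $\pi_r$ again a permutation polynomial over $\Z_n$; it then invokes the general Lemma~\ref{lem:semi_direct_limitation_3cycles} to obtain the recursion $c_{\textup{poly}}(mn)\geq c_{\textup{poly}}(n)+1$, from which $c_{\textup{poly}}(n)\geq 3$ for composite $n$ is immediate. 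Your route is more direct and self-contained: an orthomorphism case-split, together with the Taylor-expansion observation that a permutation polynomial on $\Z_{p^k}$ with $k\ge 2$ has unit derivative mod $p$ and, once centred at its unique fixed point, preserves the subgroup $p^{k-1}\Z_{p^k}$ acting there as multiplication by $\pi'(z)\in\Z_p^{\times}\setminus\{1\}$. Your argument avoids the wreath-product machinery entirely; the paper's argument is less hands-on but buys the stronger recursive bound (in particular $c_{\textup{poly}}(p^k)=k+1$, matching the affine lower bound) and connects the result to the framework used in Section~\ref{sec:semi-direct}.
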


In Section~\ref{sec:computational}, we explain the computational approach that we used to provide new bounds on $t(n)$ for small $n$, including a computational proof of $t(25)=22$ that uses the link with strong complete mappings.
Here, we also discuss several other interesting examples of permutations that we found.

Finally, in Section~\ref{sec:conclusion} we discuss several remaining open problems.

\section{Relation with strong complete mappings}
\label{sec:non_prime}
The permutations $\pi\in S(\Z_n)$ that satisfy $t([\pi])=n-2$ are of particular interest for the circular sorting problem.  In this section we show that these permutations are strong complete mappings. We use this to show that $t(n)\leq n-3$ if $n$ is divisible by $2$ or $3$ (Theorem~\ref{thm:div_by_2_or_3}) and to obtain counterexamples to Conjecture~\ref{conj:affine_prime}. We first provide the relevant definitions. 
\begin{defn}
    Let $n$ be a positive integer and denote by $\Id\in S(\Z_n)$ the identity permutation. A permutation $\pi\in S(\Z_n)$ is called 
\begin{itemize}
    \item an \emph{orthomorphism} if $\pi-\Id$ is a permutation;
    \item a \emph{complete mapping} if $\pi+\Id$ is a permutation;
    \item a \emph{strong complete mapping} if both $\pi-\Id$ and $\pi+\Id$ are permutations.
\end{itemize}
\end{defn}
Note that if $\pi\in S(\Z_n)$ is an orthomorphism (respectively complete mapping), then so are the cyclic shifts $\pi\circ c$ and $c\circ \pi$ of $\pi$. 

It is well known in the literature of orthomorphisms that an orthomorphism $\pi\in S(\Z_n)$ exists if and only if $n$ is even. This fact is often attributed to Euler~\cite{euler1782recherches} who proved this in the language of orthogonal Latin squares. We refer the reader to the excellent book by Evans~\cite{evans2018orthogonallatinsquares} for background information on orthomorphisms and their link with orthogonal Latin squares.

A strong complete mapping $\pi\in S(\Z_n)$ corresponds precisely to an arrangement of $n$ non-attacking queens on a toroidal $n\times n$ chessboard. That such an arrangement exists if and only if $\gcd(n,6)=1$, was shown by P\'olya~\cite{Polya} in 1918 (attributing the proof to Hurwitz).
\begin{prop}[P\'olya 1918]\label{Polya}
Let $n$ be a positive integer. There is a strong complete mapping $\pi\in S(\Z_n)$ if and only if $\gcd(n,6)=1$.
\end{prop}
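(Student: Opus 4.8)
The plan is to prove Proposition~\ref{Polya} (P\'olya's theorem) by splitting into the easy non-existence direction and the constructive existence direction.

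\medskip

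\noindent\textbf{Non-existence when $\gcd(n,6)>1$.} Suppose $\pi$ is a strong complete mapping of $\Z_n$. Summing $\pi(x)$ over all $x\in\Z_n$ gives $\sum_{x}\pi(x)=\sum_{y}y=\binom{n}{2}=\tfrac{n(n-1)}{2}$, and the same for $\sum_x (\pi(x)-x)$ and $\sum_x(\pi(x)+x)$, since each of $\pi-\Id$ and $\pi+\Id$ is a permutation. But $\sum_x(\pi(x)-x)=\sum_x\pi(x)-\sum_x x=0$ in $\Z_n$, while a permutation's values sum to $\tfrac{n(n-1)}{2}$. Hence $\tfrac{n(n-1)}{2}\equiv 0\pmod n$, which fails exactly when $n$ is even (then $\tfrac{n(n-1)}{2}\equiv \tfrac n2\not\equiv 0$). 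So $\pi-\Id$ being a permutation already forces $n$ odd. For the factor $3$: if $3\mid n$, consider $\sum_x(\pi(x)-x)x$ or work modulo $3$ with the reduction map $\Z_n\to\Z_3$; more cleanly, reduce everything modulo $3$ and count — the images of $\pi$, $\pi-\Id$, $\pi+\Id$ are each equidistributed mod $3$, and combining $\sum_x \pi(x)^2$ computed two ways (via $\pi-\Id$ squared and $\pi+\Id$ squared) yields a congruence obstruction modulo $3$. Concretely, from $(\pi(x)+x)^2+(\pi(x)-x)^2 = 2\pi(x)^2+2x^2$, summing over $x$ and using that each of $\pi\pm\Id$ ranges over $\Z_n$ gives $2\sum y^2 + 2\sum y^2 = 2\sum y^2 + 2\sum y^2$ — tautological over $\Z$, so instead I sum the identity $(\pi(x)+x)^2-(\pi(x)-x)^2=4x\pi(x)$ and, since the two left-hand sums are each $\sum_y y^2$, deduce $\sum_x x\pi(x)\equiv 0$; pairing this with $\sum_x(\pi(x)-x)^2=\sum y^2$ gives $2\sum_x x\pi(x)=\sum_x\pi(x)^2+\sum_x x^2-\sum_y y^2=\sum_y y^2$, so $\sum_y y^2\equiv 0\pmod n$, which fails when $3\mid n$ (as $\sum_{y=0}^{n-1}y^2=\tfrac{n(n-1)(2n-1)}{6}$). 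This disposes of both bad primes.

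\medskip

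\noindent\textbf{Existence when $\gcd(n,6)=1$.} The natural candidate is the affine map $\pi(x)=2x$. Then $\pi-\Id$ is $x\mapsto x$, the identity — a permutation — and $\pi+\Id$ is $x\mapsto 3x$, which is a permutation of $\Z_n$ precisely because $\gcd(3,n)=1$; also $\pi$ itself is a bijection because $\gcd(2,n)=1$. Thus $x\mapsto 2x$ is a strong complete mapping whenever $\gcd(n,6)=1$, settling the existence direction in one line.

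\medskip

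\noindent\textbf{Where the work is.} The existence half is essentially trivial with the map $x\mapsto 2x$; the content is entirely in the non-existence half, and there the case $2\mid n$ is the standard orthomorphism parity argument (which the paper has already flagged as ``well known, attributed to Euler''), so the only genuinely delicate step is the $3\mid n$ obstruction, which I expect to handle by the second-moment / quadratic-sum computation sketched above (summing $(\pi(x)+x)^2-(\pi(x)-x)^2=4x\pi(x)$ and then isolating $\sum_y y^2$), checking carefully that $\tfrac{n(n-1)(2n-1)}{6}\not\equiv 0\pmod n$ exactly when $3\mid n$ among odd $n$. An alternative, if one prefers to avoid the sum-of-squares bookkeeping, is to cite P\'olya's original argument via the toroidal $n$-queens correspondence, but the elementary summation proof is self-contained and short, so that is the route I would take.
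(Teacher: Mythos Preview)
Your approach is correct and essentially identical to the paper's appendix proof: an affine map for existence (you take $x\mapsto 2x$, the paper takes $x\mapsto kx$ for $n=2k+1$; yours is the cleaner choice) and first/second moment sums for the non-existence when $2\mid n$ or $3\mid n$. One small correction: the sum identity $(\pi(x)+x)^2+(\pi(x)-x)^2=2\pi(x)^2+2x^2$ is \emph{not} tautological after summing---the left side gives $2\sum_y y^2$ while the right gives $4\sum_y y^2$, so it yields $\sum_y y^2\equiv 0\pmod n$ directly (for $n$ odd); your detour through $4x\pi(x)$ reaches the same conclusion and is exactly what the paper does.
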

We included a proof of this fact in the appendix for completeness.

To establish the connection between strong complete mappings and the circular sorting problem, we recall the following lemma from~\cite{adin2025circularsorting}.
\begin{lem}[Lemma 3.5 in~\cite{adin2025circularsorting}]
\label{lem:n-2}
Let $n\geq 3$ be an integer and let $\pi\in S_n$. Then $t([\pi])=n-2$ if and only if each cyclic shift of $\pi$ has cycle type $(1,n-1)$. 
\end{lem}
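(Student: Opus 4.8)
The plan is to translate both sides of the equivalence through the identity $t(\sigma)=n-\cyc(\sigma)$, and then settle the forward direction by a counting argument on fixed points within the coset $[\pi]$. Recall that the cyclic shifts of $\pi$ are exactly the $n$ elements of $[\pi]$, i.e.\ the maps $x\mapsto\pi(x+b)$ for $b\in\Z_n$. Since $t([\pi])=\min_{\sigma\in[\pi]}t(\sigma)=n-\max_{\sigma\in[\pi]}\cyc(\sigma)$, the condition $t([\pi])=n-2$ is equivalent to $\cyc(\sigma)\le 2$ for every $\sigma\in[\pi]$; here the matching lower bound $\max_{\sigma}\cyc(\sigma)\ge 2$ is automatic, because some element of the coset has a fixed point and any permutation of $n\ge 3$ points with a fixed point has at least two cycles.

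The backward direction is then immediate: if every cyclic shift of $\pi$ has cycle type $(1,n-1)$, then every $\sigma\in[\pi]$ has $\cyc(\sigma)=2$, hence $t(\sigma)=n-2$ for all of them, so $t([\pi])=n-2$.

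For the forward direction I would assume $\cyc(\sigma)\le 2$ for all $\sigma\in[\pi]$ and use the standard observation that each $x\in\Z_n$ is fixed by exactly one element of $[\pi]$: writing $\sigma(y)=\pi(y+b)$, the equation $\pi(x+b)=x$ has the unique solution $b=\pi^{-1}(x)-x$. Next, since $n\ge 3$, no $\sigma\in[\pi]$ can have two fixed points, as two fixed points are two $1$-cycles and $\cyc(\sigma)\le 2$ would force $n=2$. Therefore the assignment sending $x$ to the unique coset element fixing it is an injection from an $n$-element set into the $n$-element coset $[\pi]$, hence a bijection; equivalently, every $\sigma\in[\pi]$ has exactly one fixed point. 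Combined with $\cyc(\sigma)\le 2$, the remaining $n-1$ points of each such $\sigma$ form a single $(n-1)$-cycle, so $\sigma$ has cycle type $(1,n-1)$, completing the argument.

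There is no serious obstacle here; the only step beyond bookkeeping is noticing that the fixed-point count—exactly $n$ fixed points spread over the $n$ elements of the coset—together with the constraints $\cyc(\sigma)\le 2$ and $n\ge 3$ pins the distribution to exactly one fixed point per element, thereby upgrading the mere inequality $\cyc(\sigma)\le 2$ to the precise cycle type $(1,n-1)$.
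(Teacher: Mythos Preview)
Your proof is correct and follows essentially the same approach as the paper: both use that each $x\in\Z_n$ is a fixed point of exactly one cyclic shift, combine this with $\cyc(\sigma)\le 2$ and $n\ge 3$ to rule out two fixed points in any shift, and conclude by a counting/pigeonhole argument that every shift has exactly one fixed point and hence cycle type $(1,n-1)$. Your write-up is slightly more explicit (spelling out the bijection and the reduction of $t([\pi])=n-2$ to $\max_\sigma\cyc(\sigma)=2$), but the underlying argument is identical.
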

\begin{proof}
The `if' part follows directly. 
For the `only if' part, suppose that $t([\pi])=n-2$. Every $x\in \Z_n$ is a fixed point of exactly one cyclic shift $\pi\circ c^k$ since $\pi(x+k)=x$ has exactly one solution $k\in \Z_n$. Since $\cyc(\pi\circ c^k)\leq 2$ for every $k$, no cyclic shift can have more than one fixed point. Hence, every cyclic shift has exactly one fixed point and therefore also one cycle of length $n-1$. 
\end{proof}

\begin{lem}
\label{lem:orthomorphism_doublefixed}
Let $\pi\in S(\Z_n)$. Then $\pi$ is an orthomorphism if and only if every cyclic shift $\pi\circ c^k$ has at most one fixed point.
\end{lem}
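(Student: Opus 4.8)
The plan is to unwind the definition of orthomorphism in terms of the cyclic shift action and match it against the fixed-point condition. Recall that $\pi$ is an orthomorphism exactly when $x \mapsto \pi(x) - x$ is a bijection of $\Z_n$, which is equivalent to saying that $\pi(x) - x = \pi(y) - y$ forces $x = y$. First I would rewrite the fixed-point condition for a cyclic shift: $x$ is a fixed point of $\pi \circ c^k$ precisely when $\pi(x + k) = x$, i.e. $\pi(x+k) - (x+k) = -k$. So if we substitute $z = x + k$, then $z$ is (the image under $c^k$ of) a fixed point of $\pi \circ c^k$ exactly when $\pi(z) - z = -k$.

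The key observation is then a counting/pigeonhole correspondence. For each $k \in \Z_n$, the number of fixed points of $\pi \circ c^k$ equals the number of solutions $z \in \Z_n$ to $\pi(z) - z = -k$, that is, the size of the fibre $(\pi - \Id)^{-1}(-k)$. As $k$ ranges over all of $\Z_n$, these fibres partition $\Z_n$ (since $-k$ ranges over all of $\Z_n$), and there are $n$ of them summing to $n$ elements total. Hence every cyclic shift $\pi \circ c^k$ has \emph{at most} one fixed point if and only if every fibre of $\pi - \Id$ has size at most one, which (again by the pigeonhole count, $n$ elements into $n$ fibres each of size $\le 1$) happens if and only if $\pi - \Id$ is injective, hence a bijection, i.e. $\pi$ is an orthomorphism. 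This gives both directions simultaneously.

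I would present this as a short two-line chain of equivalences, being careful about the bookkeeping with the sign of $k$ and the substitution $z = x+k$ (these are cosmetic; one could equally phrase it as $\pi(z) - z = -k$ or reparametrise so that $\pi \circ c^{-k}$ has a fixed point at $z$ iff $\pi(z) - z = k$). The only mild subtlety, and the place a reader might want a sentence of justification, is the passage from ``every fibre has size $\le 1$'' to ``$\pi - \Id$ is a permutation'': this uses that a self-map of a finite set with all fibres of size at most one is necessarily a bijection. There is no real obstacle here — the lemma is essentially a restatement of the orthomorphism condition through the lens of Lemma~\ref{lem:n-2}'s fixed-point analysis — so the ``hard part'' is merely choosing notation that keeps the shift index and the fibre index from colliding.
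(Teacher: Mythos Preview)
Your proposal is correct and is essentially the same argument as the paper's: both rest on the observation that $x$ is a fixed point of $\pi\circ c^k$ if and only if $(\pi-\Id)(x+k)=-k$, so fixed points of $\pi\circ c^k$ are in bijection with the fibre $(\pi-\Id)^{-1}(-k)$. The paper unpacks this into two contrapositive directions with explicit elements, while you package it as a single chain of equivalences via fibre sizes, but the content is identical.
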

\begin{proof}
Suppose that some cyclic shift $\pi\circ c^k$ has two distinct fixed points $x,y$. Then $\pi(x+k)=x$ and $\pi(y+k)=y$, so $(\pi-\Id)(x+k)=-k=(\pi-\Id)(y+k)$. It follows that $\pi-\Id$ is not injective, so $\pi$ is not an orthomorphism.

Now suppose that $\pi$ is not an orthomorphism. Then $\pi-\Id$ is not injective, say $(\pi-\Id)(x)=k=(\pi-\Id)(y)$ for distinct $x,y$. Setting $a=x+k$ and $b=y+k$ we find $\pi(a-k)=a$ and $\pi(b-k)=b$. It follows that $\pi\circ c^{-k}$ has two distinct fixed points.
\end{proof}
\begin{lem}
\label{lem:completemapping_transposition}
Let $\pi\in S(\Z_n)$. Then $\pi$ is a complete mapping if and only if no cyclic shift of $\pi$ has a transposition in its cycle decomposition.
\end{lem}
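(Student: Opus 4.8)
The plan is to run the exact analogue of the proof of Lemma~\ref{lem:orthomorphism_doublefixed}, with the additive shift playing the role there played by the equation $(\pi-\Id)(x)=-k$. The key observation is that $\pi$ fails to be a complete mapping precisely when $\pi+\Id$ is not injective, so it suffices to show that non-injectivity of $\pi+\Id$ is equivalent to some cyclic shift $\pi\circ c^k$ (which sends $x\mapsto\pi(x+k)$) containing a $2$-cycle.

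For the ``only if'' direction I would argue contrapositively: suppose some cyclic shift $\pi\circ c^k$ has a transposition $(x\,y)$ in its cycle decomposition with $x\neq y$, so that $\pi(x+k)=y$ and $\pi(y+k)=x$. Setting $a=x+k$ and $b=y+k$ (which are distinct), these two equations read $\pi(a)=b-k$ and $\pi(b)=a-k$; adding $a$ and $b$ respectively gives $(\pi+\Id)(a)=a+b-k=(\pi+\Id)(b)$, so $\pi+\Id$ is not injective and $\pi$ is not a complete mapping. For the ``if'' direction, suppose $\pi$ is not a complete mapping, so $(\pi+\Id)(a)=(\pi+\Id)(b)$ for some $a\neq b$. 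Put $k:=b-\pi(a)$; the equality $\pi(a)+a=\pi(b)+b$ shows $k=a-\pi(b)$ as well. Defining $x:=a-k$ and $y:=b-k$, one has $x\neq y$ and checks directly that $(\pi\circ c^k)(x)=\pi(a)=b-k=y$ and $(\pi\circ c^k)(y)=\pi(b)=a-k=x$, so $(x\,y)$ is a $2$-cycle of $\pi\circ c^k$.

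There is no real obstacle here: the argument is a short symmetric computation. The only points needing care are bookkeeping ones — fixing the cyclic-shift convention $(\pi\circ c^k)(x)=\pi(x+k)$ consistently with Lemma~\ref{lem:orthomorphism_doublefixed}, and verifying that $a\neq b$ (equivalently $x\neq y$) so that the $2$-cycle produced is genuine rather than degenerating to a fixed point. I would therefore write the proof tersely, in parallel with Lemma~\ref{lem:orthomorphism_doublefixed}.
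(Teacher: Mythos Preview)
Your proof is correct and follows essentially the same approach as the paper's own proof: both directions are argued contrapositively, reducing each to a short computation that matches non-injectivity of $\pi+\Id$ with the existence of a $2$-cycle in some $\pi\circ c^k$. Up to relabelling (the paper uses $(x,y)$ for the colliding inputs of $\pi+\Id$ and defines $a:=s-y$, $b:=s-x$, $k:=x+y-s$ with $s=(\pi+\Id)(x)$, whereas you name the colliding inputs $(a,b)$ and set $k:=b-\pi(a)$), the substitutions are identical.
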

\begin{proof}
Suppose that $\pi\circ c^k$ has a transposition $(x\, y)$ in its cycle decomposition. So $\pi(x+k)=y$ and $\pi(y+k)=x$. Then $(\pi+\Id)(x+k)=(\pi+\Id)(y+k)$, so $\pi+\Id$ is not injective and therefore $\pi$ is not a complete map.

Suppose that $\pi$ is not a complete map. Then $\pi+\Id$ is not injective. Hence there exist distinct $x,y$ and $s$ so that $(\pi+\Id)(x)=s=(\pi+\Id)(y)$. Set $a:=s-y$, $b:=s-x$ and $k:=x+y-s$. Then $\pi(a+k)=s-x=b$ and $\pi(b+k)=s-y=a$. So $\pi\circ c^k$ has transposition $(a\, b)$ in its cycle decomposition.
\end{proof}

Lemma~\ref{lem:n-2} and the two lemmas above immediately imply the following result.
\begin{prop}
\label{prop:relation_to_strong_complete}
Let $n\geq 4$ and let $\pi\in S(\Z_n)$. If $t([\pi])=n-2$, then $\pi$ is a strong complete mapping.
\end{prop}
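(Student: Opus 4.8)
The plan is to chain together the three preceding lemmas; once they are in hand the conclusion is essentially immediate. First I would invoke Lemma~\ref{lem:n-2}: since $t([\pi])=n-2$, every cyclic shift $\pi\circ c^k$ has cycle type $(1,n-1)$, that is, exactly one fixed point and exactly one non-trivial cycle, of length $n-1$.

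Next, to get that $\pi$ is an orthomorphism, I would apply Lemma~\ref{lem:orthomorphism_doublefixed} in the easy direction: because each cyclic shift has exactly one fixed point, it certainly has at most one fixed point, so $\pi-\Id$ is a permutation. To get that $\pi$ is a complete mapping, I would apply Lemma~\ref{lem:completemapping_transposition}: I need that no cyclic shift of $\pi$ contains a transposition in its cycle decomposition. This is where the hypothesis $n\geq 4$ is used, since it forces $n-1\geq 3$; the unique non-trivial cycle of $\pi\circ c^k$ has length $n-1\geq 3$ and hence is not a transposition. Therefore $\pi+\Id$ is a permutation as well. Combining the two, $\pi$ is a strong complete mapping.

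There is no real obstacle here — the proposition is stated precisely as an immediate corollary of the accumulated lemmas. The one point worth flagging in the write-up is the role of the bound $n\geq 4$: for $n=3$ the cycle type $(1,n-1)=(1,2)$ does contain a transposition, so $\pi+\Id$ need not be injective and the statement genuinely fails; the argument above would silently break exactly at the application of Lemma~\ref{lem:completemapping_transposition}, so it is worth making the inequality $n-1\geq 3$ explicit there.
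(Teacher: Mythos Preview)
Your proposal is correct and follows exactly the same approach as the paper: invoke Lemma~\ref{lem:n-2} to get cycle type $(1,n-1)$ for every cyclic shift, then apply Lemmas~\ref{lem:orthomorphism_doublefixed} and~\ref{lem:completemapping_transposition}, using $n-1\geq 3$ to rule out transpositions. Your remark on why $n\geq 4$ is needed matches the paper's use of the inequality.
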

\begin{proof}
By Lemma~\ref{lem:n-2}, every cyclic shift of $\pi$ has cycle type $(1,n-1)$. Since $n-1\geq 3$ it follows that no cyclic shift of $\pi$ has more than one fixed point or a transposition in its cycle decomposition. Hence, by Lemma~\ref{lem:orthomorphism_doublefixed} and Lemma~\ref{lem:completemapping_transposition}, $\pi$ is a strong complete mapping. 
\end{proof}

The proof of Theorem~\ref{thm:div_by_2_or_3} (which we restate below) now follows directly from Proposition~\ref{prop:relation_to_strong_complete}.
\begin{repthm}{thm:div_by_2_or_3}
  Let $n\geq 4$. Then $t(n)\leq n-3$ if $n$ is divisible by $2$ or $3$.
\end{repthm}
\begin{proof}
Since $\gcd(n,6)>1$, there is no strong complete mapping $\pi\in S(\Z_n)$ by Proposition~\ref{Polya}. It then follows from Proposition~\ref{prop:relation_to_strong_complete} that $t[\pi]\leq n-3$ for every $\pi\in S(\Z_n)$, which implies that $t(n)\leq n-3$. 
\end{proof}

For $n$ even, another simple proof of $t(n)\leq n-3$ is possible. Indeed, when $n$ is even, the sign of cyclic shift $c$ is odd, and so applying it must change the sign of the permutation and in particular change the cycle type. So no permutation can have cycle type $(1,n-1)$ across all cyclic shifts.

\paragraph{Counterexamples to Conjecture~\ref{conj:affine_prime}}
\label{sec:non_affine}
Using the connection to strong complete mappings, we provide counterexamples to Conjecture~\ref{conj:affine_prime} from
Adin, Alon and Roichman~\cite[Conjecture 3.4]{adin2025circularsorting}. Recall that the conjecture says that if $\pi\in S(\Z_n)$ is a permutation with $t([\pi])=n-2$, then $n$ is prime and $\pi:x \mapsto ax+b$ for some $a\in \Z_n^\times$ and $b\in \Z_n$. Since every permutation has a cyclic shift that has $0$ as a fixed point, we may restrict ourselves to such \emph{normalized} permutations (so $b=0$).

By iterating over the strong complete mappings (see also Section \ref{sec:computational}), we show computationally that the smallest $n$ for which counterexamples exist is $n=23$. We also determine all counterexamples for $n=23$.
For $n=23$ there are $194$ permutations $\pi$ with $\pi(0)=0$ such that all cyclic shifts of $\pi$ have cycle type $(1,n-1)$, whereas only $\phi(22)=10$ of these are affine. 
We include a specific counterexample to the conjecture below: 
\[
\pi=(0,2,7,21,17,12,4,9,14,19,1,8,15,22,6,3,20,13,11,18,5,10,16).
\]
Above, we use the notation $\pi=(\pi(0),\dots,\pi(n-1)).$
All counterexamples that we have found have the following algebraic structure. 
For $p$ a prime number, $a,b\in \Z_p^{\times}$, we define $[a,b]\in S(\Z_p)$ by
\[
[a,b](x) := \begin{cases} 
ax & \text{ if $x = y^2$ for some $y \in \Z_p$},\\
bx & \text{ otherwise}.
\end{cases}
\] 

Orthomorphisms of this form are known as \emph{quadratic orthomorphisms} \cite{evans1987orthomorphisms, mendelsohn1985search} (see \cite[Section 9.3.2]{evans2018orthogonallatinsquares} for a survey).
Note that as polynomials, they have degree $(p + 1)/2$ (unless $a = b$) as 
\[[a, b](x) = \left(\frac{a - b}{2}\right) x^{(p + 1)/2} + \left(\frac{a + b}{2}\right) x.\]

Using an exhaustive computer search over all strong complete maps in $S(\Z_{23})$, described in Section~\ref{sec:computational}, we can prove the following. 
\begin{prop}\label{prop:quadratic}
For $\pi\in S_{23}$, $t([\pi])=21$ if and only if 
$\pi(x)=[a,b](x + c)+d$ for some $c,d \in \Z_{23}$ and 
\[(a, b) \in \{(5, 7), (7, 5), (10, 14), (14, 10), (11, 15), (15, 11), (20, 21), (21,20)\}
\]
or $a=b$ (so $\pi$ is affine) and 
\[
a\in \{5, 7, 10, 11, 14, 15, 17, 19, 20, 21\}.
\]
\end{prop}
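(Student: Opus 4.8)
The plan is to turn the classification into a finite computation that is justified by the structural results above. By Lemma~\ref{lem:n-2} and Proposition~\ref{prop:relation_to_strong_complete}, a permutation $\pi\in S_{23}$ satisfies $t([\pi])=21$ exactly when its unique \emph{normalized} representative $\sigma\in[\pi]$ (the one with $\sigma(0)=0$) is a strong complete mapping for which \emph{every} cyclic shift $\sigma\circ c^k$ has cycle type $(1,22)$. Since $t([\cdot])$ depends only on the coset, it therefore suffices to enumerate all normalized $\sigma$ with this property and to check, for each, that $\sigma$ is a cyclic shift of $[a,b](x+c)+d$ for one of the listed pairs $(a,b)$, or of an affine map $x\mapsto ax$ with $a$ one of the listed primitive roots.

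The enumeration I would run is a backtracking search that assigns the values $\sigma(1),\sigma(2),\dots$ one at a time while maintaining the three invariants that $\sigma$, $\sigma-\Id$ and $\sigma+\Id$ remain injective on the assigned part of the domain. By Lemmas~\ref{lem:orthomorphism_doublefixed} and~\ref{lem:completemapping_transposition} these are precisely the conditions ``no cyclic shift has two fixed points'' and ``no cyclic shift contains a transposition'', i.e.\ the strong complete mapping constraints. Since the target condition forbids a cycle of every length strictly between $1$ and $22$ in every cyclic shift, one prunes further by discarding any partial assignment that already completes a short cycle in some $\sigma\circ c^k$; this incremental short-cycle pruning, not merely the strong-complete-mapping pruning, is what keeps the search tree manageable. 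Whenever a full $\sigma$ survives, one computes the cycle types of all $23$ shifts directly and retains $\sigma$ only if each equals $(1,22)$.

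The search space is cut down by a factor close to $|\mathrm{AGL}(1,\Z_{23})|=23\cdot 22$ by working modulo conjugation by affine maps $x\mapsto ux+v$: such a map normalizes $\langle c\rangle$ and preserves the cycle type of every cyclic shift, so it carries a coset with $t([\pi])=21$ to another such coset, and one enumerates normalized solutions up to this action and then restores orbits. This reduction also explains the shape of the answer: conjugating $[a,b]$ by $x\mapsto ux$ returns $[a,b]$ when $u$ is a nonzero square and $[b,a]$ otherwise, which is why each admissible pair occurs together with its swap; and conjugating $x\mapsto ax$ by an affine map only shifts the constant term, so the coset $[x\mapsto ax]$ is fixed, which is why exactly the ten primitive roots modulo $23$ appear.

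For the ``if'' direction one verifies that every listed candidate really attains $t([\pi])=21$. By the invariance just noted, together with the fact that cyclic shifts and conjugation by translations realize all values of $c$ and $d$, it is enough to check the four representatives $[a,b]$ with $c=d=0$ and the affine maps $x\mapsto ax$ (the latter already established in~\cite{adin2025circularsorting}), each check being again the computation of $23$ cycle types. I expect the only genuine difficulty to be computational, namely keeping the symmetry-reduced backtracking tree small enough; the structural lemmas do all the conceptual work by turning the global condition ``$t([\pi])=21$'' into local constraints that can be enforced and pruned on incrementally.
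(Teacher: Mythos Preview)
Your proposal is correct and follows essentially the same approach as the paper: reduce via Lemma~\ref{lem:n-2} and Proposition~\ref{prop:relation_to_strong_complete} to an exhaustive backtracking search over normalized strong complete mappings of $\Z_{23}$, with pruning on short cycles in cyclic shifts and symmetry reduction by the affine action, then match survivors against the listed quadratic and affine candidates. The paper's own argument is precisely this computer search (described in Section~\ref{sec:computational}), with the symmetry reduction phrased in terms of normalizing the minimal slope rather than the $\mathrm{AGL}(1,\Z_{23})$-action, which amounts to the same thing.
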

There are $184=23\cdot 8$ options for $\pi$ with $\pi(0)=0$ among the non-affine examples. Note that if $\pi$ has the required cycle type for all cyclic shifts and $\pi(0)=0$, then the same holds for $\pi_i: x\mapsto \pi(x+i)-\pi(i)$ for every $i\in \Z_n$. If $\pi$ is not affine (and $n$ is prime) then these are all different. Hence, we get $23\cdot 8=184$ normalised examples.

\medskip

For a prime number $p$, the map $[a,b]\in S(\Z_p)$ is a strong complete mapping if and only if $ab$, $(a-1)(b-1)$, and $(a+1)(b+1)$ are non-zero squares in $\Z_p$ (see \cite{evans2018orthogonallatinsquares}). Searching specifically for quadratic strong complete mappings, we found examples of non-affine permutations $\pi\in S(\Z_p)$ that satisfy $t([\pi])=p-2$ for the following primes $p$ below $500$:
\begin{align*}
 &23, 31, 41, 59, 71, 83, 89, 97, 103, 113, 131, 137, 139, 149, 151, 157, 163, 173, 179, 181, 197, 199,\\ 
 &211, 223, 227, 239, 251, 271, 283, 293,
 307, 311, 331, 347, 349, 367, 379, 383,389, 401, 409, 419,\\ 
 &431,433, 461, 467, 487, 491, 499. 
\end{align*}

\section{Lower bounds from affine permutations}\label{sec:affine}
In this section, we restrict to affine permutations $\pi\in S(\Z_n)$. That is, permutations of the form $\pi(x)=ax$, where $a\in \Z_n^\times$. We define
\[
t_\mathrm{aff}(n):=\max\{t([\pi]): \pi\in S(\Z_n)\text{ is affine}\}.
\]
Clearly, $t_\mathrm{aff}(n)$ is a lower bound on $t(n)$. We will determine the number $t_\mathrm{aff}(n)$ in terms of the prime factorisation of $n$ for all positive $n$. 

We recall that for odd primes $p$ the group $\Z_{p^k}^\times$ is cyclic of order $\varphi(p^k)=(p-1)p^{k-1}$. The group $\Z_{2^k}^\times$ is cyclic for $k\leq 2$ and isomorphic to $\Z_2\times \Z_{2^{k-2}}$ for $k\geq 3$ (see~\cite[Art.~90–91]{Gauss1986Disquisitiones}). The element $3\in \Z_{2^k}^\times$ is an element of maximum order for every $k$ and we denote this order\footnote{In general one defines the Carmichael function $\lambda(n)$ as the maximum order of an element of $\Z_n^\times$} by $\lambda(2^k)$. So $\lambda(2^k)=\varphi(2^k)$ if $k\leq 2$ and $\lambda(2^k)=\varphi(2^k)/2$ for $k\geq 3$.

For an element $a\in \Z_{n}^\times$ and a positive divisor $d\mid n$ we denote by $\ord_d(a)$ the order of $(a \bmod{d})$ in $\Z_d^\times$.
\begin{lem}\label{lem:affine_cycle}
    Let $n$ be a positive integer and let $a\in \Z_n^\times$.
    Then the number of cycles in the cycle decomposition of the permutation $\pi: x \mapsto ax$ in $\Z_n$ is equal to 
    \[\sum_{d \mid n} \frac{\varphi(n/d)}{\ord_{n/d}(a)} = 
    \sum_{d \mid n} \frac{\varphi(d)}{\ord_{d}(a)}.\]
\end{lem}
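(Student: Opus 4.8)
The plan is to partition $\Z_n$ according to the gcd of each element with $n$, and count the cycles of $\pi: x \mapsto ax$ restricted to each block. Observe that multiplication by a unit $a$ preserves $\gcd(x,n)$, so each set $A_e := \{x \in \Z_n : \gcd(x,n) = e\}$ is invariant under $\pi$. For a divisor $e \mid n$, writing $x = e y$ with $y \in \Z_{n/e}$ and $\gcd(y, n/e) = 1$ gives a bijection between $A_e$ and the group of units $\Z_{n/e}^\times$, and under this identification $\pi$ acts as multiplication by $(a \bmod n/e)$ on $\Z_{n/e}^\times$. So it suffices to count the cycles of the permutation ``multiply by $a$'' acting on the group $\Z_{m}^\times$ for $m = n/e$.

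The key step is then purely group-theoretic: if a finite group $G$ (here abelian, $G = \Z_m^\times$) acts on itself by left multiplication by a fixed element $a$ of order $r = \ord_m(a)$, then every orbit has size exactly $r$ (the orbit of $g$ is $\{g, ag, a^2 g, \dots, a^{r-1}g\}$, and these are distinct since $a^i g = a^j g$ forces $a^{i-j} = 1$). Hence the number of orbits is $|G|/r = \varphi(m)/\ord_m(a)$. Summing over all $e \mid n$, and re-indexing the sum by $d = n/e$ (which ranges over all divisors of $n$ as $e$ does), yields
\[
\cyc(\pi) = \sum_{e \mid n} \frac{\varphi(n/e)}{\ord_{n/e}(a)} = \sum_{d \mid n} \frac{\varphi(d)}{\ord_{d}(a)},
\]
which is exactly the claimed formula; the two displayed expressions in the statement are the same sum written with the two indexings. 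Note fixed points of $\pi$ (such as $x=0$, corresponding to $d=1$) are counted correctly, since the $d=1$ term contributes $\varphi(1)/\ord_1(a) = 1$.

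I do not expect a serious obstacle here: the only points requiring care are (i) checking that the map $x = ey \mapsto y$ is well-defined as a bijection $A_e \to \Z_{n/e}^\times$ and intertwines the $\pi$-action with multiplication by $a \bmod (n/e)$ — this is a routine consequence of $\gcd(ey, n) = e$ iff $\gcd(y, n/e) = 1$ — and (ii) making sure every element of $\Z_n$ lies in exactly one block $A_e$, so that no cycle is counted twice or omitted, which is immediate. The mild subtlety worth a sentence is that $\ord_{n/e}(a)$ is well-defined precisely because $a \in \Z_n^\times$ maps to a unit mod every divisor of $n$.
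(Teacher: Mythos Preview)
Your proposal is correct and follows essentially the same approach as the paper: partition $\Z_n$ into the sets $A_e=\{x:\gcd(x,n)=e\}$ (the paper writes these as $V_e=e\Z_n^\times$), observe each is $\pi$-invariant with all cycles of length $\ord_{n/e}(a)$, and sum. The only cosmetic difference is that the paper verifies the cycle length directly via $a^\ell x=x\iff a^\ell\equiv 1\pmod{n/e}$, whereas you pass through the explicit bijection $A_e\to\Z_{n/e}^\times$ and phrase it as orbit-counting for left multiplication; the content is the same.
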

\begin{proof}For a positive divisor $d\mid n$ denote $V_d:=d\Z_n^\times$. More explicitly,
\begin{align*}
V_d&=\{i \bmod{n}: i\in \{0,\ldots, n-1\},\ \gcd(i,n)=d\}\\
&=\{di \bmod{n}: i\in \{0,\ldots, \tfrac{n}{d}-1\},\ \gcd(i,\tfrac{n}{d})=1\}.
\end{align*}
From the first equality, we see that the sets $V_d$ partition $\Z_n$ and each $V_d$ is a union of orbits of $\pi$. From the second equality, we see that $|V_d|=\varphi(n/d)$. 

For any $x\in V_d$ the length of the cycle of $\pi$ containing $x$ is equal to $\ord_{n/d}(a)$ since 
\[
a^\ell x=x \iff (a^\ell-1)d\equiv 0 \pmod{n} \iff a^\ell-1 \equiv 0\pmod{n/d}.
\]
We find that $V_d$ is the union of $\frac{\varphi(n/d)}{\ord_{n/d}(a)}$ cycles. Summing over all positive divisors $d\mid n$ concludes the proof. 
\end{proof}

The next lemma shows that for affine permutation $\pi$, the normalised cyclic shift determines $t([\pi])$. 
\begin{lem}\label{lem:affine_max}
Let $n$ and $a$ be coprime positive integers. For every integer $k$ define $\pi_k\in S(\Z_n)$ by $\pi_k(x)=ax+k$. Then $\cyc(\pi_k)\leq \cyc(\pi_0)$.
\end{lem}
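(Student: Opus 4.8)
The plan is to relate the cycle structure of $\pi_k$ to that of $\pi_0$ by analyzing how the affine map $x \mapsto ax + k$ acts on cosets, mirroring the structure exposed in the proof of Lemma~\ref{lem:affine_cycle}. The key observation is that $\pi_k$ is conjugate to $\pi_0$ whenever $k$ lies in the image of $(\Id - a)$ acting on $\Z_n$: if $k = (1-a)c$ then $\pi_k(x) = a(x-c) + c$, so $\pi_k = \tau_c^{-1} \pi_0 \tau_c$ where $\tau_c$ is translation by $c$, and hence $\cyc(\pi_k) = \cyc(\pi_0)$. So the content of the lemma is entirely about those $k$ that are \emph{not} of this form, i.e.\ $k \notin (1-a)\Z_n$.

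First I would reduce to understanding $\pi_k$ on each set $V_d = d\Z_n^\times$ from Lemma~\ref{lem:affine_cycle}, or rather on an appropriate filtration by the subgroups $d\Z_n$. The cleanest route: for each divisor $d \mid n$, consider the subgroup $d\Z_n \leq \Z_n$ (of order $n/d$). Note $\pi_k(d\Z_n) = d\Z_n + k$, which equals $d\Z_n$ iff $d \mid k$. Set $e = \gcd(k, n, \text{(something involving } 1-a)$; more precisely, let $g = \gcd(n, 1-a)$ and write $k = k_0$. Then the orbit of any $x$ under $\pi_k$ stays within a coset of the subgroup generated by $\{(1-a)\Z_n, k\}$, because $\pi_k^\ell(x) - x \in \langle (1-a)\Z_n, k\rangle$ for all $\ell$ (since $\pi_k(y) - y = (a-1)y + k$). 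So the orbits of $\pi_k$ refine the coset partition of $\Z_n$ by $H := \langle (1-a)\Z_n \cup \{k\}\rangle$. Within each coset of $H$, one shows $\pi_k$ acts as a single affine map with no fixed point unless the coset is $H$ itself (or a translate making it conjugate to a fixed-point situation) — the number of orbits contributed by a coset of size $|H|$ is at most what $\pi_0$ contributes to the corresponding piece, with equality only when $k \in (1-a)\Z_n$.

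The main obstacle I expect is making the counting on each $H$-coset precise: one needs that the map $\pi_k$ restricted to a coset $H + t$ (which $\pi_k$ permutes among a cycle of cosets, then returns) has \emph{at most} as many cycles as the corresponding restriction of $\pi_0$. The natural tool is: if $\pi_k$ permutes the cosets $H+t, H+\pi_k(t), \dots$ cyclically with period $m$, then $\pi_k^m$ restricted to $H+t$ is again affine, $x \mapsto a^m x + (\text{const})$, and its cycle count on $H + t$ is governed by $\ord$ of $a^m$ modulo the relevant modulus — and crucially the additive constant either vanishes (conjugate-to-$\pi_0$ case) or forces fewer, longer cycles. I would isolate this as a sublemma: \emph{for coprime $n, a$ and any $k$, the permutation $x \mapsto ax+k$ of $\Z_n$ has at most $\sum_{d \mid n} \varphi(d)/\ord_d(a)$ cycles}, proved by the coset-filtration argument above, and then Lemma~\ref{lem:affine_cycle} identifies the right-hand side with $\cyc(\pi_0)$. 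An alternative, possibly shorter, approach worth trying: induct on the number of prime factors of $n$ using CRT — write $n = n_1 n_2$ with $\gcd(n_1,n_2)=1$, so $\Z_n \cong \Z_{n_1} \times \Z_{n_2}$ and $\pi_k$ splits as a product; a cycle of $\pi_k$ has length $\lcm$ of the two component cycle lengths, and a Burnside/counting estimate $\cyc(\pi_k) = \sum_{\text{orbits}} 1 \leq \cyc(\pi_k^{(1)}) \cdot \cyc(\pi_k^{(2)}) / (\text{overlap})$ together with the prime-power base case (where $\Z_{p^j}^\times$ cyclic makes the fixed-point analysis transparent) closes the induction. The prime-power base case is where the real work sits, and there $k \neq 0$ can be absorbed by translation unless $p \mid (1-a)$ and $p \nmid$ the $p$-adic valuation bookkeeping of $k$ — a finite case check.
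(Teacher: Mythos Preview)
Your proposal sketches two routes but completes neither, and the second has a concrete obstruction.

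For the CRT induction: even if you establish $\cyc(\pi_k^{(i)})\leq \cyc(\pi_0^{(i)})$ on each factor $\Z_{n_i}$, this does \emph{not} imply the inequality on $\Z_{n_1}\times\Z_{n_2}$. The cycle count of a direct product $\sigma_1\times\sigma_2$ equals $\sum_{i,j}\gcd(c_i,d_j)$ (summing over cycle lengths $c_i$ of $\sigma_1$ and $d_j$ of $\sigma_2$), and this is not monotone in the componentwise cycle counts. Concretely: take factor~2 to be a single $4$-cycle; if factor~1 has cycle lengths $(1,3)$ the product has $\gcd(1,4)+\gcd(3,4)=2$ cycles, whereas if factor~1 is a single $4$-cycle the product has $\gcd(4,4)=4$ cycles. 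So reducing the cycle count in one factor can \emph{increase} it in the product, and your inductive step needs more than cycle counts to close.

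For the coset-filtration route: $\pi_k$ preserves cosets of $H=\langle(1-a)\Z_n,\,k\rangle$, while $\pi_0$ preserves the strictly finer cosets of $(1-a)\Z_n$ whenever $k\notin(1-a)\Z_n$. Your phrase ``what $\pi_0$ contributes to the corresponding piece'' is never made precise, and since the two partitions have different block sizes there is no natural bijection to compare along. This is exactly the obstacle you flag, and nothing in the proposal overcomes it.

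The paper's argument sidesteps both detours by applying Burnside's lemma directly. Since $\pi_k^\ell(x)=a^\ell x+k(a^{\ell-1}+\cdots+1)$, the equation $\pi_k^\ell(x)=x$ either has no solution or has exactly $\gcd(n,a^\ell-1)$ solutions; in either case $\fix(\pi_k^\ell)\leq\gcd(n,a^\ell-1)=\fix(\pi_0^\ell)$. With $m=\ord_n(a)$ one checks $\ord(\pi_k)=tm$ for some integer $t\geq 1$, and then
\[
\cyc(\pi_k)=\frac{1}{tm}\sum_{\ell=0}^{tm-1}\fix(\pi_k^\ell)\ \leq\ \frac{1}{tm}\sum_{\ell=0}^{tm-1}\fix(\pi_0^\ell)=\frac{1}{m}\sum_{\ell=0}^{m-1}\fix(\pi_0^\ell)=\cyc(\pi_0),
\]
the middle equality using that $\pi_0^m=\Id$ makes the summand $m$-periodic. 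The pointwise inequality $\fix(\pi_k^\ell)\leq\fix(\pi_0^\ell)$ is precisely the extra structural information your CRT reduction was missing; you gesture at ``a Burnside/counting estimate'' but never isolate this, and once one does, no induction or coset bookkeeping is needed.
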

We remark that if $a-1\in \Z_n^\times$, then in fact $\cyc(\pi_k)=\cyc(\pi_0)$. Indeed, taking $u$ such that $(a-1)u\equiv k\pmod{n}$ we have  $\pi_k(x)=ax+k=a(x+u)-u=\pi(x+u)-u$, so $\pi_k$ and $\pi_0$ are conjugated permutation and therefore have the same cycle type. We now prove the general case. 
\begin{proof}
Let $m$ be the order of $a$ in $\Z_n^\times$. For positive $\ell\in \Z$ we have
\[
\pi_k^\ell(x)=a^\ell x+k(a^{\ell-1}+\cdots+a+1).
\]
It follows that $\pi_k^\ell$ is the identity map if and only if $k(a^{\ell-1}+\cdots+a+1)=0$ and $a^\ell=1$. In particular, $\pi_0$ has order $m$ and the order of $\pi_k$ is a positive multiple of $m$. 

Let $\pi_k$ have order $tm$ for $t\geq 1$. Using Burnside's lemma, we have 
\[
\cyc(\pi_k)=\frac{1}{tm}\sum_{\ell=0}^{tm-1} \fix(\pi_k^\ell),
\]
where $\fix(\pi)$ denotes the number of fixed points of a permutation $\pi$. The number of fixed points of $\pi_k^\ell$ is equal to the number of solutions $x\in \Z_n$ to $(a^\ell-1)x=-k(a^{\ell-1}+\cdots+a+1)$. Hence
\[
\fix(\pi_k^\ell)=\begin{cases}0&\text{if $k(a^{\ell-1}+\cdots+a+1)\not\in (a^\ell-1)\Z_n$}\\
\gcd(n,a^{\ell}-1)&\text{otherwise.}\end{cases}
\]
So $\fix(\pi_k^\ell)\leq \fix(\pi_0^\ell)$. Since $\pi_0^m=\Id$ we find
\[
\cyc(\pi_k)=\frac{1}{tm}\sum_{\ell=0}^{tm-1} \fix(\pi_k^\ell)\leq \frac{1}{tm}\sum_{\ell=0}^{tm-1} \fix(\pi_0^\ell)=\frac{1}{m}\sum_{\ell=0}^{m-1} \fix(\pi_0^\ell)=\cyc(\pi_0).\qedhere
\]
\end{proof}
Combining Lemma~\ref{lem:affine_cycle} and Lemma~\ref{lem:affine_max} we immediately obtain the following result.

\begin{prop}\label{prop:affine}
    Let $a, n$ be coprime positive integers. Let $\pi\in S(\Z_n)$ be the affine permutation given by $\pi(x)=ax$. Then 
    \[
    t([\pi])=n-\sum_{d \mid n} \frac{\varphi(d)}{\ord_{d}(a)}.
    \]
\end{prop}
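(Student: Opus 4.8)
The plan is to combine the two preceding lemmas directly, with the only real work being a careful bookkeeping argument about which cyclic shift of the affine permutation $\pi$ attains the minimum number of transpositions. Recall that $t([\pi]) = n - \max_{k} \cyc(\pi \circ c^k)$, and that the cyclic shifts of $\pi: x \mapsto ax$ are precisely the maps $\pi_k: x \mapsto ax + k$ for $k \in \Z_n$ (since $\pi(x+k) = ax + ak$ and $ak$ ranges over all of $\Z_n$ as $k$ does, because $a \in \Z_n^\times$). So I would first record this identification of $[\pi]$ with the family $\{\pi_k\}_{k \in \Z_n}$.

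Next, I would invoke Lemma~\ref{lem:affine_max}, which says $\cyc(\pi_k) \le \cyc(\pi_0)$ for every $k$. Since $\pi_0 = \pi$ itself is one of the cyclic shifts (take $k=0$), this shows $\max_k \cyc(\pi_k) = \cyc(\pi_0)$. Therefore $t([\pi]) = n - \cyc(\pi_0)$. Then I would apply Lemma~\ref{lem:affine_cycle} to the permutation $\pi_0: x \mapsto ax$, which gives $\cyc(\pi_0) = \sum_{d \mid n} \frac{\varphi(d)}{\ord_d(a)}$. Substituting yields
\[
t([\pi]) = n - \sum_{d \mid n} \frac{\varphi(d)}{\ord_d(a)},
\]
as claimed.

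There is essentially no obstacle here, since both ingredients are already proved; the one point that requires a line of care is verifying that $\pi_0$ genuinely lies in the coset $[\pi]$ so that the bound from Lemma~\ref{lem:affine_max} is achieved (rather than merely being an upper bound over the coset elements indexed by shifts of the \emph{input}). This is immediate from the remark preceding the proposition that $\pi(x+b) = a(x+b) = ax + ab$, so as $b$ ranges over $\Z_n$ we obtain exactly $\{\pi_k : k \in \Z_n\}$, and $k=0$ corresponds to $b=0$. One could also note, using the remark after Lemma~\ref{lem:affine_max}, that when $a-1 \in \Z_n^\times$ all shifts are conjugate and the inequality is an equality throughout; but this refinement is not needed for the statement. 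I would keep the proof to three or four sentences, citing Lemma~\ref{lem:affine_cycle} and Lemma~\ref{lem:affine_max} explicitly.
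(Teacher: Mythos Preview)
Your proposal is correct and follows essentially the same approach as the paper, which simply states that the result follows immediately by combining Lemma~\ref{lem:affine_cycle} and Lemma~\ref{lem:affine_max}. Your extra care in verifying that the coset $[\pi]$ coincides with $\{\pi_k:k\in\Z_n\}$ and that the maximum of $\cyc(\pi_k)$ is attained at $k=0$ is exactly the bookkeeping the paper leaves implicit.
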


By expressing the maximum of $t([\pi])$ over all affine $\pi$ in terms of the prime factorisation of $n$, we obtain the following expression for the affine circular sorting number. 
\begin{thm}\label{thm:affine_general}
    Let $n$ be a positive integer with prime factorisation $n = 2^{k_0}p_1^{k_1} \cdots p_r^{k_r}$. Then 
\[
t_\mathrm{aff}(n)=n-\sum_{\substack{0\leq m_i\leq k_i\\ i=0,\ldots, r}} \frac{\varphi(2^{m_0})\cdot \varphi(p_1^{m_1})\cdots\varphi(p_r^{m_r})}{\lcm\{\lambda(2^{m_0}),\varphi(p_1^{m_1}),\ldots, \varphi(p_r^{m_r})\}}
.\]
\end{thm}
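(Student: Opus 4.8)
The plan is to reduce the statement to Proposition~\ref{prop:affine} and then carry out the maximisation over $a\in\Z_n^\times$ using the Chinese Remainder Theorem. By Proposition~\ref{prop:affine}, for a fixed $a\in\Z_n^\times$ we have $t([\pi])=n-\sum_{d\mid n}\varphi(d)/\ord_d(a)$, so $t_{\mathrm{aff}}(n)=n-\min_{a\in\Z_n^\times}\sum_{d\mid n}\varphi(d)/\ord_d(a)$, and the whole task is to evaluate this minimum. First I would write $n=2^{k_0}p_1^{k_1}\cdots p_r^{k_r}$ and use the ring isomorphism $\Z_n^\times\cong \Z_{2^{k_0}}^\times\times\prod_i\Z_{p_i^{k_i}}^\times$, so that choosing $a$ amounts to choosing independent components $a_0,a_1,\dots,a_r$ in each factor. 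Every divisor $d\mid n$ factors uniquely as $d=2^{m_0}p_1^{m_1}\cdots p_r^{m_r}$ with $0\le m_i\le k_i$, and $\ord_d(a)=\lcm\{\ord_{2^{m_0}}(a_0),\ord_{p_1^{m_1}}(a_1),\dots,\ord_{p_r^{m_r}}(a_r)\}$ by CRT (using that $\ord_{p^0}=1$). Likewise $\varphi(d)=\prod_i\varphi(p_i^{m_i})$ (with $p_0=2$). So the sum to be minimised is
\[
\sum_{\substack{0\le m_i\le k_i\\ i=0,\dots,r}}\frac{\varphi(2^{m_0})\varphi(p_1^{m_1})\cdots\varphi(p_r^{m_r})}{\lcm\{\ord_{2^{m_0}}(a_0),\ord_{p_1^{m_1}}(a_1),\dots,\ord_{p_r^{m_r}}(a_r)\}},
\]
and it remains to show this is minimised by taking each $a_i$ of maximal order in $\Z_{p_i^{k_i}}^\times$, which turns each $\ord_{p_i^{m_i}}(a_i)$ into $\lambda(p_i^{m_i})$, and $\lambda(p^m)=\varphi(p^m)$ for odd $p$ while $\lambda(2^m)$ is as in the theorem statement — giving exactly the claimed formula.

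The key step is therefore the following monotonicity claim: if $a_i$ has maximal order in $\Z_{p_i^{k_i}}^\times$ for every $i$, then for \emph{every} divisor $d\mid n$ the denominator $\ord_d(a)$ is as large as possible given the constraints, i.e.\ $\ord_{p_i^{m_i}}(a_i)=\lambda(p_i^{m_i})$ for all $m_i\le k_i$ simultaneously. For odd $p$ this is immediate because $\Z_{p^{k}}^\times$ is cyclic, so a generator reduces mod $p^m$ to an element whose order is $\varphi(p^m)=\lambda(p^m)$ for every $m\le k$; one checks this via the surjection $\Z_{p^k}^\times\twoheadrightarrow\Z_{p^m}^\times$. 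For $p=2$ one uses the stated fact that $3$ has maximal order $\lambda(2^m)$ in $\Z_{2^m}^\times$ for every $m$ (this is exactly why $3$, rather than an arbitrary element, is singled out in the preamble). Granting that such a choice of $a$ exists and simultaneously maximises every denominator, the sum with this $a$ is termwise minimal over all choices of $a$, hence equals the minimum; substituting back gives the formula.

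I expect the main obstacle to be the bookkeeping in the CRT step — correctly tracking the correspondence between divisors $d\mid n$ and tuples $(m_0,\dots,m_r)$, and verifying $\ord_d(a)=\lcm_i\ord_{p_i^{m_i}}(a_i)$ including the degenerate exponents $m_i=0$ where the factor contributes order $1$ and $\varphi(1)=1$. A secondary subtlety is the claim that an element of maximal order in $\Z_{p^k}^\times$ stays of maximal order modulo every smaller power $p^m$; this needs the explicit structure of these groups recalled just before Lemma~\ref{lem:affine_cycle}, and for $p=2,\ k\ge 3$ it genuinely uses that $\Z_{2^k}^\times\cong\Z_2\times\Z_{2^{k-2}}$ with $3$ generating a cyclic direct factor of order $2^{k-2}=\lambda(2^k)$ and reducing appropriately. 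Once these two points are pinned down, the rest is a direct substitution of $\ord_{p_i^{m_i}}(a_i)=\lambda(p_i^{m_i})=\varphi(p_i^{m_i})$ (for $i\ge 1$) and $\ord_{2^{m_0}}(a_0)=\lambda(2^{m_0})$ into the sum, together with $\lcm\{\lambda(2^{m_0}),\varphi(p_1^{m_1}),\dots,\varphi(p_r^{m_r})\}$ appearing as the denominator, which is precisely the right-hand side of the theorem.
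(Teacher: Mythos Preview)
Your proposal is correct and follows essentially the same approach as the paper: reduce to Proposition~\ref{prop:affine}, decompose each divisor $d$ via CRT so that $\ord_d(a)=\lcm_i\ord_{p_i^{m_i}}(a_i)$, and then choose $a\equiv 3\pmod{2^{k_0}}$ and $a$ a generator of each $\Z_{p_i^{k_i}}^\times$ to simultaneously maximise every $\ord_d(a)$. The paper simply asserts that ``it is clear that for this choice of $a$ all values $\ord_d(a)$ are maximised,'' whereas you spell out the supporting fact that an element of maximal order in $\Z_{p^k}^\times$ reduces to an element of maximal order in $\Z_{p^m}^\times$ for every $m\le k$ (and that $3$ does this for $p=2$); this is exactly the content behind the paper's ``clear.''
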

\begin{proof}
Let $a$ be a positive integer coprime to $n$ and define $\pi\in S(\Z_n)$ by $\pi(x):=ax$. Let $d=2^{m_0}p_1^{m_1}\cdots p_r^{m_r}$ be a divisor of $n$. Then 
\[
\ord_d(a)=\lcm \{\ord_{2^{m_0}}(a),\ord_{p_1^{m_1}}(a),\ldots, \ord_{p_r^{m_r}}(a)\}.
\]
By the Chinese remainder theorem, we can choose $a$ such that $a\equiv 3\pmod{2^{m_0}}$ and $a$ generates $\Z_{p_i}^{k_i}$ for $i\in [r]$. With this choice, we find 
\[
\ord_d(a)=\lcm \{\lambda(2^{m_0}),\varphi(p_1^{m_1}),\ldots, \varphi(p_r^{m_r})\},
\]
and it is clear that for this choice of $a$ all values $\ord_d(a)$ are maximised. By Proposition~\ref{prop:affine} it follows that 
\[
t_\mathrm{aff}(n)=t([\pi])=n-\sum_{\substack{0\leq m_i\leq k_i\\ i=0,\ldots, r}} \frac{\varphi(2^{m_0})\cdot \varphi(p_1^{m_1})\cdots\varphi(p_r^{m_r})}{\lcm\{\lambda(2^{m_0}),\varphi(p_1^{m_1}),\ldots, \varphi(p_r^{m_r})\}}.\qedhere
\]
\end{proof}
We remark that $t_\mathrm{aff}$ can also be written in terms of the Carmichael function $\lambda$, where $\lambda(n)$ is the exponent of $\Z_n^\times$,  as $t_\mathrm{aff}(n)=n-\sum_{d\mid n}\frac{\varphi(d)}{\lambda(d)}$. 

As special cases of Theorem~\ref{thm:affine_general}, we recover the following bounds from~\cite{adin2025circularsorting}, 
where $p$ denotes an odd prime and $k\geq 1$ an integer:
\[
  t_{\mathrm{aff}}(p^k)= p^k-k-1,\qquad
  t_{\mathrm{aff}}(2p^k)= 2p^k-2k-2,\qquad
  t_{\mathrm{aff}}(2^{k+1})= 2^{k+1}-2k-1.
\]

\section{Wreath product construction}
\label{sec:semi-direct}
Let $m,n$ be positive integers. Let $G$ be the subgroup of $S(\Z_m\times \Z_n)$ consisting of all $\sigma$ of the form 
\begin{equation}\label{eq:semidirect}
    \sigma(x,y)=(\pi(x), \pi_{x}(y))
\end{equation}
where $\pi\in S(\Z_m)$ and $\pi_x\in S(\Z_n)$ for all $x\in\Z_m$. So $G$ is isomorphic to the wreath product $S(\Z_n)\wr S(\Z_m)$. If we think of $G$ as acting on a grid with $m$ columns and $n$ rows, the elements of~$G$ first permute the elements in each column~$x$ using $\pi_x$ and then permute the columns using $\pi$. When working in a group $\Z_n$, we will often write `$k$' instead of `$k\bmod{n}$' for integers $k$ to avoid cumbersome notation.

Let $c\in G$ be given by 
\begin{equation}\label{eq:c_semidirect}
c(x,y):=\begin{cases}(x+1,y)&\text{if $x\neq m-1$},\\(x+1,y+1)&\text{if $x=m-1$}.\end{cases}
\end{equation}
Note that $c$ is a cyclic permutation of order $nm$.  Identifying the sets $\Z_{mn}$ and $\Z_m\times \Z_n$ via the bijection $r+mt\mapsto (r,t)$ for $r=0,\ldots, m-1$ and $t=0,\ldots, n-1$, the cyclic shift $z\mapsto z+1$ in $S(\Z_{nm})$ corresponds to $c$. 

In this section, we construct permutations $\sigma\in G$ such that $\sigma c^k$ has few cycles for every $k$. We first give an expression for the cyclic shifts $\sigma c^k$ of $\sigma$.
Given an integer $k=r+mt$ where $r\in \{0,\ldots, m-1\}$ and $t\in \Z$, we define 
\[
w_k:\Z_m\to \Z_n,\qquad 
w_k(x):=\begin{cases}t&\text{if } x\in\{0,1,\ldots, m-1-r\},\\
t+1&\text{otherwise}.
\end{cases}
\]
Observe that $c^k(x,y)=(x+k,y+w_k(x))$. More generally, we immediately obtain the following lemma.
\begin{lem}\label{lem:sigmarotate}
Let $\sigma$ be as in (\ref{eq:semidirect}) and let $k$ be an integer. Then, for every $(x,y)\in \Z_m\times \Z_n$ we have
\[
\sigma c^k(x,y)=(\pi(x+k), \pi_{x+k}(y+w_k(x))).
\]
\end{lem}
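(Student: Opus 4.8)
The plan is to first prove the displayed identity for $c^k$ on its own, namely that $c^k(x,y)=(x+k,\,y+w_k(x))$ for every integer $k$ and every $(x,y)\in\Z_m\times\Z_n$, and then obtain the lemma simply by composing with $\sigma$ and unfolding its definition~(\ref{eq:semidirect}).

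For the $c^k$ identity I would argue by induction on $k\ge 0$. The base case $k=0$ is immediate: writing $0=0+m\cdot 0$ gives $r=0$, $t=0$, hence $w_0\equiv 0$ and $c^0=\Id$. For the inductive step, suppose $c^k(x,y)=(x+k,\,y+w_k(x))$ and write $k=r+mt$ with $r\in\{0,\ldots,m-1\}$. Applying $c$ via~(\ref{eq:c_semidirect}) to $(x+k,\,y+w_k(x))$, the first coordinate becomes $x+k+1$, while the second coordinate gains $1$ precisely when $(x+k)\bmod m=m-1$, i.e. (using $x+k\equiv x+r\pmod m$) precisely when $x\equiv m-1-r\pmod m$. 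So it suffices to verify that $w_{k+1}(x)=w_k(x)$ in general, with the single exception $w_{k+1}(x)=w_k(x)+1$ exactly when $x\equiv m-1-r\pmod m$. This is a direct comparison of the two piecewise definitions, splitting into the case $r<m-1$ (where $k+1=(r+1)+mt$, so the threshold index $m-1-r$ drops by one) and the case $r=m-1$ (where $k+1=0+m(t+1)$, so $w_{k+1}\equiv t+1$); in both cases the jump in $w$ occurs exactly at $x\equiv m-1-r$, matching the wrap of $c$.

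To extend to negative $k$, I would use that $c$ has order $nm$ (as noted after~(\ref{eq:c_semidirect})): writing $k'\in\{0,\ldots,nm-1\}$ for the representative of $k\bmod nm$, we have $c^k=c^{k'}$, and $w_k$ and $w_{k'}$ coincide as maps into $\Z_n$ because increasing $k$ by $nm$ increases $t$ by $n$ and hence does not change $w_k(x)\bmod n$. Finally, applying $\sigma$ with $\sigma(x',y')=(\pi(x'),\pi_{x'}(y'))$ to $(x',y')=c^k(x,y)=(x+k,\,y+w_k(x))$ gives
\[
\sigma c^k(x,y)=\bigl(\pi(x+k),\,\pi_{x+k}(y+w_k(x))\bigr),
\]
which is the assertion of the lemma.

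The only mildly delicate point is the bookkeeping in the inductive step, in particular the wraparound case $r=m-1$ where the index range defining $w_k$ degenerates to a single value; everything else is routine substitution.
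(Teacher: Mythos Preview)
Your proof is correct and follows the same route as the paper: the paper simply records the observation $c^k(x,y)=(x+k,\,y+w_k(x))$ and states that the lemma follows immediately by composing with $\sigma$. Your write-up is a fully spelled-out version of this, supplying the induction on $k\ge 0$ and the reduction of negative $k$ to nonnegative $k$ via $\ord(c)=nm$, details the paper leaves to the reader.
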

When repeatedly applying the map $\sigma$ to some starting point $(x,y)$, the first coordinate will perform a cycle according to $\pi$ and if $x$ is in a cycle of length $t$ in $\pi$, then $(x,y)$ will be in a cycle of a length divisible by $t$ in $\sigma$. The following lemma helps with determining which multiple of $t$ this is.

\begin{lem}\label{lem:lift}Let $\sigma$ be as in (\ref{eq:semidirect}) and let $(a_1\, a_2\ldots a_t)$ be a cycle of $\pi$. Then for all $y\in \Z_n$
\[
\sigma^t(a_1,y)=(a_1,\pi'(y)),
\]
where $\pi':=\pi_{a_t}\cdots\pi_{a_2}\pi_{a_1}$. Let the cycles in the cycle decomposition of $\pi'$ have lengths $\ell_1\leq \ell_2\leq\cdots\leq \ell_d$. Then $\sigma
$ over $\{a_1,\ldots, a_t\}\times \Z_n$ induces the disjoint union of $d$ cycles of lengths $t\ell_1,\ldots,t\ell_d$, respectively.
\end{lem}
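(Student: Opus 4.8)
The plan is to prove Lemma~\ref{lem:lift} by first establishing the formula for $\sigma^t(a_1,y)$ via a direct induction, and then analysing the resulting dynamical system on $\{a_1,\dots,a_t\}\times\Z_n$ cycle by cycle.

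\textbf{Step 1: the formula $\sigma^t(a_1,y)=(a_1,\pi'(y))$.} Applying Lemma~\ref{lem:sigmarotate} with $k=0$ (or just \eqref{eq:semidirect} directly), $\sigma(x,y)=(\pi(x),\pi_x(y))$. I would prove by induction on $j$ that for $1\le j\le t$,
\[
\sigma^j(a_1,y)=\bigl(a_{j+1},\ \pi_{a_j}\pi_{a_{j-1}}\cdots\pi_{a_1}(y)\bigr),
\]
with indices on the $a$'s read cyclically modulo $t$ so that $a_{t+1}=a_1$. The base case $j=1$ is immediate, and the inductive step applies $\sigma$ once more: the first coordinate goes from $a_{j+1}$ to $\pi(a_{j+1})=a_{j+2}$, and the second coordinate gets hit by $\pi_{a_{j+1}}$, extending the composition on the left. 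Setting $j=t$ gives $\sigma^t(a_1,y)=(a_1,\pi'(y))$ with $\pi'=\pi_{a_t}\cdots\pi_{a_1}$ as claimed. Note this also shows $\sigma^t$ maps the set $\{a_1\}\times\Z_n$ to itself, and more generally (starting the cycle at $a_i$ instead of $a_1$) that $\sigma^t$ preserves each slice $\{a_i\}\times\Z_n$, acting there as a conjugate of $\pi'$.

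\textbf{Step 2: from cycles of $\pi'$ to cycles of $\sigma$.} Fix a cycle of $\pi'$ of length $\ell$, say on the set $Y=\{y_1,\dots,y_\ell\}\subseteq\Z_n$ with $\pi'(y_i)=y_{i+1}$ cyclically. I claim $\{a_1,\dots,a_t\}\times Y$ is a single cycle of $\sigma$ of length $t\ell$. By the intermediate formula in Step~1, the $\sigma$-orbit of $(a_1,y_1)$ visits, in order, $(a_2,*),(a_3,*),\dots,(a_t,*),(a_1,\pi'(y_1))=(a_1,y_2)$, then continues through the $a_i$'s again returning to $(a_1,y_3)$, and so on; after $t\ell$ steps it returns to $(a_1,y_1)$ for the first time, since $\sigma^{tj}(a_1,y_1)=(a_1,\pi'^j(y_1))$ and $\pi'^j(y_1)=y_1$ first happens at $j=\ell$, while no intermediate power $\sigma^s$ with $s$ not a multiple of $t$ can fix $(a_1,y_1)$ because its first coordinate would be $a_{s+1}\ne a_1$. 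Hence the orbit has exactly $t\ell$ elements, and it is contained in $\{a_1,\dots,a_t\}\times Y$, which also has $t\ell$ elements, so they coincide. Letting the cycles of $\pi'$ have lengths $\ell_1\le\cdots\le\ell_d$ (so $\ell_1+\cdots+\ell_d=n$), these $d$ sets $\{a_1,\dots,a_t\}\times Y_j$ partition $\{a_1,\dots,a_t\}\times\Z_n$ into $\sigma$-cycles of lengths $t\ell_1,\dots,t\ell_d$, as required.

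\textbf{Main obstacle.} The argument is essentially bookkeeping; the only place that needs care is the \emph{minimality} claim in Step~2 — that the orbit of $(a_1,y_1)$ first closes up after exactly $t\ell$ steps rather than some proper divisor. This is where one must use both that $a_1,\dots,a_t$ are genuinely distinct (so returning to first coordinate $a_1$ forces the step count to be a multiple of $t$) and that $Y$ is a single $\pi'$-cycle of length exactly $\ell$ (so $\pi'^j(y_1)=y_1$ forces $\ell\mid j$). Combining these two divisibility facts yields $t\ell\mid$ (orbit length), and since the orbit lies in a set of size $t\ell$ we get equality. Everything else is routine, so I would keep the write-up short, stating the intermediate formula of Step~1 explicitly since it is reused in Step~2.
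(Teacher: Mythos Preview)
Your approach is essentially the paper's: both compute the iterates of $\sigma$ explicitly along the $\pi$-cycle and read off the $\sigma$-cycle structure from the cycles of $\pi'$, then finish with a cardinality count. One small slip in Step~2: the $\sigma$-orbit of $(a_1,y_1)$ is \emph{not} in general contained in $\{a_1,\dots,a_t\}\times Y$, since the intermediate second coordinates $\pi_{a_j}\cdots\pi_{a_1}(y_1)$ need not lie in $Y$ (only the returns to first coordinate $a_1$ land in $\{a_1\}\times Y$). This does not damage the argument---your minimality reasoning already gives orbit length exactly $t\ell$, and the $d$ orbits are pairwise disjoint because they meet $\{a_1\}\times\Z_n$ in the disjoint sets $\{a_1\}\times Y_j$, so the total length $t(\ell_1+\cdots+\ell_d)=tn$ forces them to partition $\{a_1,\dots,a_t\}\times\Z_n$, exactly as the paper concludes.
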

\begin{proof}
Let $(y_1\, y_2\ldots y_\ell)$ be a cycle of $\pi'$. Then 
\[
\begin{alignedat}{10}
\sigma:\;& (a_1,y_1) &\;  \mapsto\;  & (a_2,\pi_{a_1}y_1) &\; \mapsto\; & \cdots &\; \mapsto\; &(a_t,\pi_{a_{t-1}}\cdots\pi_{a_2}\pi_{a_1}y_1) & \;\mapsto\;\\
& (a_1,y_2) &\; \mapsto\; & (a_2,\pi_{a_1}y_2) &\; \mapsto\; & \cdots &\; \mapsto\; & (a_t,\pi_{a_{t-1}}\cdots \pi_{a_2}\pi_{a_1}y_2) &\; \mapsto\;\\
&&&&&\vdots&&\\
& (a_1,y_\ell) &\; \mapsto\; & (a_2,\pi_{a_1}y_\ell) &\; \mapsto\; & \cdots &\; \mapsto\; & (a_t,\pi_{a_{t-1}}\cdots\pi_{a_2}\pi_{a_1}y_\ell) &\; \mapsto\; & (a_1,y_1)
\end{alignedat}
\]
gives a cycle of length $t\ell$. The resulting $d$ cycles of $\sigma$ are disjoint, contained in $\{a_1,\ldots, a_t\}\times \Z_n$ and their lengths sum to $t(\ell_1+\cdots+\ell_d)=tn=|\{a_1,\ldots, a_t\}\times \Z_n|$, completing the proof.
\end{proof}
The following proposition relates the circular sorting number of a product to that of its factors. We will shortly strengthen this result, but it is instructive first to consider the following simpler case.
\begin{prop}\label{prop:mnbound}
For integers $n,m\geq 2$, we have $t(nm)\geq nt(m)$.
\end{prop}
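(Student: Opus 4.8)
The plan is to construct a suitable $\sigma\in G\cong S(\Z_n)\wr S(\Z_m)$ and apply Lemma~\ref{lem:sigmarotate} and Lemma~\ref{lem:lift} to control the cycle count of every cyclic shift $\sigma c^k$. Fix a permutation $\tau\in S(\Z_m)$ achieving $t([\tau])=t(m)$, so by Lemma~\ref{lem:n-2}-type reasoning every cyclic shift $\tau c^k_{(m)}$ of $\tau$ (here $c_{(m)}$ is the cyclic shift on $\Z_m$) has at most $\cyc(\tau c^k_{(m)})\le m-t(m)$ cycles. The natural attempt is to take $\sigma(x,y)=(\tau(x),y)$, i.e. all the fibre permutations $\pi_x$ equal to the identity on $\Z_n$. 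Then by Lemma~\ref{lem:sigmarotate}, $\sigma c^k(x,y)=(\tau(x+k),y+w_k(x))$, and I would analyse this via Lemma~\ref{lem:lift}: on a cycle $(a_1\,a_2\,\dots\,a_t)$ of $x\mapsto \tau(x+k)$, the relevant product $\pi'$ is a composition of shifts $y\mapsto y+w_k(a_i)$, hence $\pi'$ is itself a single shift $y\mapsto y+s$ on $\Z_n$ where $s=\sum_i w_k(a_i)$. A shift $y\mapsto y+s$ on $\Z_n$ decomposes into $\gcd(s,n)$ cycles, so this cycle of $\tau(\cdot+k)$ contributes $\gcd(s,n)\le n$ cycles, each of length $t\cdot n/\gcd(s,n)$, to $\sigma c^k$.

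Summing over the $\cyc(\tau(\cdot+k))$ cycles of the first coordinate, we get $\cyc(\sigma c^k)\le n\cdot \cyc(\tau(\cdot+k)) \le n(m-t(m))= nm-nt(m)$, and therefore $t([\sigma])= nm-\min_k\cyc(\sigma c^k)\ge nm-(nm-nt(m))=nt(m)$, giving $t(nm)\ge nt(m)$ as desired. The one subtlety is matching the cyclic shift $c$ of $G$ defined in~\eqref{eq:c_semidirect} with the cyclic shift on $\Z_m$ in the first coordinate: $c$ advances $x$ by $1$ every step (and only occasionally bumps $y$), so the first coordinate of $\sigma c^k$ is exactly $x\mapsto \tau(x+k)$, i.e. it ranges over precisely the cyclic shifts of $\tau$ as $k$ ranges over $\Z_{mn}$ (indeed already as $k$ ranges over $\Z_m$). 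Hence $\min_k \cyc$ of the first coordinate is $m-t(m)$, exactly as needed.

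The main obstacle is really just bookkeeping: being careful that the bound $\cyc(\sigma c^k)\le n\cdot\cyc(\pi^{(k)})$ holds for \emph{every} $k$ (not just $k\equiv 0$), which follows cleanly from Lemma~\ref{lem:lift} together with the observation that any composition of shifts on $\Z_n$ is again a shift, so each first-coordinate cycle lifts to at most $n$ cycles of $\sigma$ regardless of the values $w_k(a_i)$. There is nothing to optimise here — we are happy to lose a factor of $n$ per cycle — which is exactly why the stronger statement $t(nm)\ge nt(m)+t(n)$ of Theorem~\ref{thm:lower_bounds}(i) requires a more refined choice of the fibre permutations $\pi_x$ rather than taking them all to be the identity; that refinement is deferred to the strengthening of this proposition.
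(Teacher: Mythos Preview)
Your argument is essentially identical to the paper's: choose $\sigma(x,y)=(\tau(x),y)$ with all fibre maps the identity, use Lemma~\ref{lem:sigmarotate} to compute $\sigma c^k$, and then Lemma~\ref{lem:lift} to bound each first-coordinate cycle's lift by at most $n$ cycles. One small slip: the displayed formula $t([\sigma])=nm-\min_k\cyc(\sigma c^k)$ should have $\max_k$, not $\min_k$ (since $t([\sigma])=\min_k t(\sigma c^k)=nm-\max_k\cyc(\sigma c^k)$); your bound $\cyc(\sigma c^k)\le n(m-t(m))$ holds for every $k$, so the conclusion survives with the corrected formula.
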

\begin{proof} Let $\pi'\in S(\Z_m)$ be such that $x\mapsto \pi'(x+k)$ has at most $m-t(m)$ cycles for every integer $k$, and define $\sigma'(x,y):=(\pi'(x),y)$. 
Let $k$ be an integer and set $\sigma:=\sigma'c^k$. It suffices to show that $\sigma$ has at most $n(m-t(m))$ cycles. 

Let $\pi : x \mapsto \pi'(x+k)$. Then $\sigma(x,y)=(\pi(x), y+w_k(x))$. Since $\pi$ has at most $m-t(m)$ cycles, and each corresponds to at most $n$ cycles of $\sigma$ by Lemma~\ref{lem:lift}, it follows that the number of cycles of $\sigma$ is at most $n(m-t(m))$.  
\end{proof}
The construction used above can be seen as taking  $\pi_x'=\Id$ for all $x\in \Z_m$; in the next construction, we choose a different permutation for $\pi_0'$ which will allow us to slightly improve the lower bound.
\begin{prop}
For integers $n,m\geq 2$, we have $t(nm)\geq nt(m)+t(n)$.
\end{prop}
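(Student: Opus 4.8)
The plan is to mimic the construction in the proof of Proposition~\ref{prop:mnbound}, but to replace the identity permutation $\pi_0' = \Id$ in column $0$ by a well-chosen permutation $\rho \in S(\Z_n)$ witnessing $t(n)$, i.e.\ a permutation such that $y \mapsto \rho(y + j)$ has at most $n - t(n)$ cycles for every integer $j$. As before, let $\pi' \in S(\Z_m)$ witness $t(m)$, so that $x \mapsto \pi'(x+k)$ has at most $m - t(m)$ cycles for every $k$. Define $\sigma' \in G$ by $\sigma'(x,y) := (\pi'(x), \pi_x'(y))$ where $\pi_0' = \rho$ and $\pi_x' = \Id$ for $x \neq 0$. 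It suffices to show that for every integer $k$, the cyclic shift $\sigma := \sigma' c^k$ has at most $n(m - t(m)) + n - t(n)$ cycles, since then $t([\sigma']) \geq nm - n(m-t(m)) - (n - t(n)) = nt(m) + t(n)$.

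The key computation is to analyse the cycles of $\sigma$ via Lemma~\ref{lem:lift}. Write $\pi : x \mapsto \pi'(x+k)$; by Lemma~\ref{lem:sigmarotate} we have $\sigma(x,y) = (\pi(x), \pi_{x+k}'(y + w_k(x)))$. For a cycle $(a_1\,a_2\,\ldots\,a_t)$ of $\pi$, Lemma~\ref{lem:lift} (applied with the shifted permutations $\pi_{a_i + k}' \circ (\,\cdot\, + w_k(a_i))$ in place of $\pi_{a_i}$ — one checks the lemma's proof goes through verbatim with this adjustment, as only the composite $\pi'$ matters) tells us the block $\{a_1,\ldots,a_t\} \times \Z_n$ splits into as many cycles of $\sigma$ as the composite permutation on $\Z_n$ has. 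For any cycle of $\pi$ that does \emph{not} pass through $0$, all the relevant column permutations $\pi_{a_i + k}'$ are the identity, so the composite is $y \mapsto y + (\text{sum of shifts})$, a single $n$-cycle or a union of $\gcd$-many equal cycles; in any case this block contributes at most $n$ cycles. The one cycle of $\pi$ passing through $0$ contributes, by the same lemma, exactly as many cycles as the composite map $y \mapsto \rho(y + j)$ for the appropriate $j \in \Z_n$ (the accumulated shift along that cycle, with $\rho$ inserted at the position where the cycle visits $0$), conjugated by the other translations — and translations do not change cycle type, so this count is at most $n - t(n)$ by choice of $\rho$. Summing: at most $m - t(m) - 1$ cycles of $\pi$ avoid $0$, each giving $\leq n$ cycles, plus the one cycle through $0$ giving $\leq n - t(n)$, for a total of at most $n(m - t(m) - 1) + (n - t(n)) = n(m-t(m)) - t(n) $... wait, that is too good; the correct bookkeeping is $n(m - t(m) - 1) + (n - t(n)) = n(m-t(m)) + (n - t(n)) - n$, so I must instead bound the non-zero cycles by at most $m - t(m)$ in total (not $m-t(m)-1$) only when convenient, and more carefully: the cycle through $0$ also could have length contributing up to $n$ cycles in the worst case, but we are \emph{improving} on that by $t(n)$, giving total $\leq n(m-t(m)) - t(n) + n$... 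The clean statement to prove is: total cycles $\leq n\cdot(\text{\# cycles of }\pi\text{ not through }0) + (n - t(n))$, and since $\pi$ has $\leq m - t(m)$ cycles, the number not through $0$ is $\leq m - t(m) - 1$ when $\pi$ is not the identity and $\leq m - t(m)$ otherwise; the worst case to check is precisely that the bound $n(m - t(m)) + (n - t(n)) - n = n(m-t(m)) - t(n)$ suffices, which upon unwinding gives $t([\sigma']) \ge nm - n(m-t(m)) + t(n) = nt(m) + t(n)$, exactly as claimed. I would organise this as a single clean inequality rather than a case split.

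The main obstacle is the careful bookkeeping in the last paragraph: one must be precise about which cycle of $\pi$ passes through $0$, that it is \emph{unique}, that inserting $\rho$ at one position along that cycle and translations elsewhere yields a composite permutation conjugate to $y \mapsto \rho(y+j)$ for some $j$ depending on $k$, and that the remaining $m - t(m) - 1$ (or fewer) cycles of $\pi$ each lift to at most $n$ cycles of $\sigma$. A subtlety worth flagging explicitly is that $w_k$ is constant except for a single jump, so the shift accumulated around a cycle of $\pi$ is well-defined, and one should confirm that Lemma~\ref{lem:lift} applies with the twisted column maps $y \mapsto \pi_{x+k}'(y + w_k(x))$ — this is immediate since the proof of that lemma only uses that $\sigma$ has the block-triangular form \eqref{eq:semidirect}, which is preserved. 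Once the accounting is set up correctly, the inequality $t(nm) \ge nt(m) + t(n)$ drops out.
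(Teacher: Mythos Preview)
Your construction and overall argument are exactly the paper's: put the witness $\rho$ for $t(n)$ in column~$0$, the identity elsewhere, and use Lemma~\ref{lem:lift} to count cycles over each cycle of $\pi$. Two small cleanups: (i) your first count was already right---with $\pi$ having at most $m-t(m)$ cycles and exactly one of them distinguished, the total is at most $n(m-t(m)-1)+(n-t(n))=n(m-t(m))-t(n)$, giving $t([\sigma'])\geq nm-\bigl(n(m-t(m))-t(n)\bigr)=nt(m)+t(n)$, so the ``wait, that is too good'' panic and the subsequent re-derivation are unnecessary; (ii) the distinguished cycle of $\pi$ is the one through $-k$, not through $0$, since $\pi'_{x+k}=\rho$ exactly when $x=-k$ (this is what the paper writes, and is what makes the composite along that cycle equal to $y\mapsto\rho(y+j)$ for the accumulated shift $j$).
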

\begin{proof}
Let $\pi'\in S(\Z_m)$ be such that $x\mapsto \pi'(x+k)$ has at most $m-t(m)$ cycles for every integer~$k$, and let $\pi'_{0}\in S(\Z_n)$ be such that $y\mapsto \pi'_{0}(y+k)$ has at most $n-t(n)$ cycles for every integer~$k$. Define $\sigma'\in G$ by
\[
\sigma'(x,y):=(\pi'(x),\pi'_x(y)),\text{ where $\pi'_x=\Id$ for $x\neq 0$}.
\]
Let $k$ be an integer and let $\sigma=\sigma'c^k$. It suffices to show that $\sigma$ has at most $nm-(nt(m)+t(n))$ cycles. 

Observe that we have $\sigma(x,y)=(\pi(x), \pi_x(y))$, where 
\[
\pi(x)=\pi'(x+k),\qquad \pi_x(y)=y+w_k(x)\text{ for $x\neq -k$}, \quad \pi_{-k}(y)=\pi'_{0}(y+w_k(-k))
\]
by Lemma~\ref{lem:sigmarotate}.

Let $(a_1\, a_2\ldots, a_t)$ be a cycle of $\pi$. By Lemma~\ref{lem:lift}, the set $\{a_1,\ldots, a_t\}\times \Z_n$ partitions into at most $n$ cycles of $\sigma$. Now consider in more detail the case that $(a_1\, a_2\ldots, a_t)$ contains the element $-k$. Without loss of generality we may assume that $a_t=-k$. Then $\pi_{a_t}\cdots \pi_{a_1}(y)=\pi_{0}'(y+w_k(a_1)+\cdots+w_k(a_t))$. Since the permutation $y\mapsto \pi_{0}'(y+r)$ has at most $n-t(n)$ cycles for any $r$, we find that by Lemma~\ref{lem:lift} the set $\{a_1,\ldots, a_t\}\times \Z_n$ partitions into at most $n-t(n)$ cycles of $\sigma$. In total, we find that $\sigma$ has at most $n(m-t(m)-1)+(n-t(n))=nm-(nt(m)+t(n))$ cycles. This proves that $t(nm)\geq t([\sigma'])\geq nm-(nm-nt(m)-t(n))=nt(m)+t(n)$ as desired.
\end{proof}
When $(a_1~a_2~\dots~a_t)$ is a cycle of $\pi$, the map $\pi'=\pi_{a_t}\cdots \pi_{a_2}\pi_{a_1}$ from Lemma~\ref{lem:lift} not only depends on the values $a_1,\dots,a_t$ but it can also depend on their order in the cycle. To make analysis easier (or even possible), it is therefore helpful to choose permutations $\pi_x$ that commute. 
When $n$ is a prime number, we know that there are permutations of the form $\pi_a:x\mapsto ax$ with $t([\pi_a])=n-2$, and conveniently all such permutations commute. We exploit this below to obtain better lower bounds in the case that $n$ and $m$ are odd prime numbers.
\begin{lem}\label{lem:generalpq}
Let $p,q\geq 3$ be prime numbers. For each $x\in \Z_p$, let $e_x$ be an integer, and set $e:=\sum_{x\in \Z_p}e_x$. Then
\[
t(pq)\geq pq-\max_x (2+\gcd(q-1,e_x)+\gcd(q-1, e-e_x)).
\]
\end{lem}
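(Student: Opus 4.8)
The plan is to instantiate the wreath product construction of this section with a carefully chosen family of affine permutations $\pi_x$ on $\Z_q$ and the ``worst'' permutation $\pi'$ on $\Z_p$. Concretely, let $g$ be a generator of $\Z_q^\times$ and set $\pi_x' : y \mapsto g^{e_x} y$ for each $x \in \Z_p$, and let $\pi' \in S(\Z_p)$ be affine with $t([\pi']) = p-2$, which exists since $p$ is prime (by the results of Adin--Alon--Roichman recalled in the introduction, or equivalently since $\Z_p^\times$ is cyclic). Define $\sigma' \in G$ by $\sigma'(x,y) = (\pi'(x), \pi_x'(y))$. We will bound the number of cycles of every cyclic shift $\sigma = \sigma' c^k$ and then read off the lower bound on $t(pq)$.

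First I would compute, via Lemma~\ref{lem:sigmarotate}, the explicit form $\sigma(x,y) = (\pi(x), \pi_x(y))$ where $\pi : x \mapsto \pi'(x+k)$ and $\pi_x : y \mapsto \pi_{x+k}'(y + w_k(x)) = g^{e_{x+k}}(y + w_k(x))$. Since $\pi'$ is affine on $\Z_p$ with $p$ prime and has $t([\pi']) = p-2$, every cyclic shift $\pi$ of $\pi'$ has cycle type $(1, p-1)$ by Lemma~\ref{lem:n-2}: exactly one fixed point $x_0$ and one $(p-1)$-cycle $(a_1\, a_2\, \dots\, a_{p-1})$ on the remaining elements. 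Then I would apply Lemma~\ref{lem:lift} twice. For the $(p-1)$-cycle, the relevant composition is $\pi_{a_{p-1}}\cdots\pi_{a_1} : y \mapsto g^{(\sum_{i} e_{a_i+k})}\, y + (\text{const}) = g^{e - e_{x_0+k}}\, y + (\text{const})$, since the $a_i + k$ range over all of $\Z_p$ except $x_0 + k$. An affine map $y \mapsto \alpha y + \beta$ on $\Z_q$ with $\alpha = g^{e - e_{x_0+k}}$ has multiplicative order $\ord_q(\alpha) = (q-1)/\gcd(q-1, e - e_{x_0+k})$, and by Proposition~\ref{prop:affine} (or directly Lemma~\ref{lem:affine_cycle} together with Lemma~\ref{lem:affine_max}) it has at most $1 + (q-1)/\ord_q(\alpha) = 1 + \gcd(q-1, e - e_{x_0+k})$ cycles; hence the block $\{a_1,\dots,a_{p-1}\}\times\Z_q$ splits into at most $1 + \gcd(q-1, e - e_{x_0+k})$ cycles of $\sigma$. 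For the fixed point $x_0$, Lemma~\ref{lem:lift} with $t=1$ shows that $\{x_0\}\times\Z_q$ splits into the cycles of $\pi_{x_0} : y \mapsto g^{e_{x_0+k}}(y + w_k(x_0))$, again affine with multiplier $g^{e_{x_0+k}}$, hence at most $1 + \gcd(q-1, e_{x_0+k})$ cycles.

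Combining the two contributions, $\cyc(\sigma) \le 2 + \gcd(q-1, e_{x_0+k}) + \gcd(q-1, e - e_{x_0+k})$. As $k$ ranges over $\Z$, the index $x_0 + k$ (the fixed point of $\pi$) ranges over all of $\Z_p$, so the worst case is $\max_x\bigl(2 + \gcd(q-1, e_x) + \gcd(q-1, e - e_x)\bigr)$. Therefore $t([\sigma']) = pq - \max_k \cyc(\sigma' c^k) \ge pq - \max_x(2 + \gcd(q-1, e_x) + \gcd(q-1, e-e_x))$, which gives the claimed bound since $t(pq) \ge t([\sigma'])$.

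The main obstacle — and the reason the commutativity remark preceding the lemma is relevant — is making sure the composition $\pi_{a_{p-1}}\cdots\pi_{a_1}$ is genuinely independent of the cyclic order of the $a_i$. Because each $\pi_x'$ is a \emph{linear} map $y\mapsto g^{e_x}y$ (before the $w_k$ shifts are folded in), the multiplicative parts commute, so the multiplier of the composite is unambiguously $g^{\sum_i e_{a_i+k}}$ regardless of order; only the additive constant depends on the order, and that constant is irrelevant to the cycle count by Lemma~\ref{lem:affine_max} (or the conjugation remark after it, valid here since $q$ is prime so $\alpha - 1$ is invertible whenever $\alpha \neq 1$, and the $\alpha = 1$ case is a pure translation whose cycle count is immediate). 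The remaining care is purely bookkeeping: tracking that the $a_i + k$ really do hit every residue except the fixed point $x_0 + k$, and handling the degenerate sub-cases $e - e_{x_0+k} \equiv 0$ or $e_{x_0+k} \equiv 0 \pmod{q-1}$ (where the corresponding multiplier is $1$) uniformly — these are exactly the cases where $\gcd(q-1,\cdot) = q-1$ and the block is a union of $q$ fixed points or is a single translation orbit structure, consistent with the stated bound.
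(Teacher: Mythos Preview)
Your proposal is correct and follows essentially the same approach as the paper: the same wreath product construction with $\pi'_x(y)=g^{e_x}y$ and a $\pi'$ of cycle type $(1,p-1)$ in every shift, the same use of Lemma~\ref{lem:sigmarotate} and Lemma~\ref{lem:lift} on the fixed point and the $(p-1)$-cycle, and the same bound $1+(q-1)/\ord_q(b)=1+\gcd(q-1,\cdot)$ for the cycle count of an affine map on $\Z_q$. Your extra remarks on commutativity of the multiplicative parts and the degenerate $\alpha=1$ case are accurate and make explicit what the paper leaves implicit.
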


\begin{proof}Let $\pi'\in S(\Z_p)$ be such that for every $k$ the cyclic shift $x\mapsto \pi'(x+k)$ has two cycles: one of length~$1$ and one of length~$p-1$. Let $g$ be a generator of $\Z_q^\times$ and define $\pi'_x(y)=g^{e_x}y$. We define $\sigma'\in G$ by $\sigma'(x,y)=(\pi'(x),\pi'_x(y))$. Let $k$ be an integer and let $\sigma=\sigma'c^k$. It suffices to show that $\sigma$ has at most $2+\gcd(q-1,e_x)+\gcd(q-1, e-e_x)$ cycles for some $x\in \Z_p$.

Let $\pi(x)=\pi'(x+k)$. Then 
\[
\sigma(x,y)=(\pi(x),\pi_x(y)),\text{ where $\pi_x(y)=g^{e_{x+k}}\cdot(y+w_k(x))$}.
\]
Let $(a_0)(a_1\, a_2\ldots\, a_{p-1})$ be the cycle decomposition of $\pi$ and write 
$m_i=e_{a_i+k}$ for $i=0,\ldots, p-1$. Define 
\[
b=g^{m_0},\qquad b'=g^{m_1+\cdots+m_{p-1}}=g^{e-m_0}.
\]
Then there are $t,t'\in \Z_q$ such that 
\[
\sigma(a_0,y)=(a_0,by+t),\qquad \sigma^{p-1}(a_1,y)=(a_1, b'y+t').
\]
Since an affine permutation $y\mapsto by+t$ of $\Z_q$ has at most $1+(q-1)/\ord_q(b)$ cycles (equality if $b\neq 1$), it follows by Lemma~\ref{lem:lift} that $\sigma$ has at most $2+\gcd(q-1,m_0)+\gcd(q-1,e-m_0)$
cycles, completing the proof. 
\end{proof}

\begin{prop}
\label{prop:pq-5}
Let $p,q\geq 3$ be prime numbers. Then $t(pq)\geq pq-5$.
\end{prop}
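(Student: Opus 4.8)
The plan is to apply Lemma~\ref{lem:generalpq}: it suffices to find integers $e_x$, $x\in\Z_p$, with $e:=\sum_{x\in\Z_p}e_x$, such that
\[
\gcd(q-1,e_x)+\gcd(q-1,e-e_x)\le 3\qquad\text{for every }x\in\Z_p,
\]
for then $\max_x\bigl(2+\gcd(q-1,e_x)+\gcd(q-1,e-e_x)\bigr)\le 5$ and the lemma yields $t(pq)\ge pq-5$. Each gcd is at least $1$, and since $q-1$ is even, once every $e_x$ is coprime to $q-1$ (hence odd) each difference $e-e_x=\sum_{i\ne x}e_i$ is even; so the best one can hope for is a sum of exactly $3$ for every $x$, and that is what I would aim for.

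Write $q-1=2^su$ with $u$ odd and $s\ge 1$, and let $\ell_1,\dots,\ell_r$ be the distinct odd primes dividing $u$. The moduli $4,\ell_1,\dots,\ell_r$ are pairwise coprime, so I can prescribe the residues of each $e_x$ modulo $4$ and modulo each $\ell_j$ independently and then take $e_x$ to be any integer realising these residues. To handle the odd part of $q-1$ it is enough to make every $e_x$ and every $e-e_x$ coprime to $u$. For each $\ell=\ell_j$: if $\ell\nmid p-1$, set $e_x\equiv 1\pmod\ell$ for all $x$, so $e-e_x\equiv p-1\not\equiv 0\pmod\ell$; if $\ell\mid p-1$, set two of the $e_x$ to $1$ and the other $p-2$ to $2$ modulo $\ell$, so that $e\equiv 2(p-1)\equiv 0\pmod\ell$ and hence $e-e_x\in\{-1,-2\}\pmod\ell$, both nonzero since $\ell\ge 3$. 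In either case every residue $e_x\bmod\ell$ lies in $\{1,2\}$ and is nonzero. (Here $p\ge 3$ guarantees enough indices.)

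It then remains to choose the residues modulo $4$. If $4\nmid q-1$, or if $p\equiv 3\pmod 4$, take $e_x\equiv 1\pmod 4$ for every $x$: then $e$ is odd, every $e_x$ is odd and coprime to $q-1$, so $\gcd(q-1,e_x)=1$, while $e-e_x$ is even and coprime to $u$, so $\gcd(q-1,e-e_x)=2$ — immediately when $4\nmid q-1$ (then $q-1$ has a single factor of $2$), and otherwise because $e\equiv p\equiv 3\pmod 4$ forces $e-e_x\equiv 2\pmod 4$. If $4\mid q-1$ and $p\equiv 1\pmod 4$ (so $p\ge 5$), instead take $e_0\equiv e_1\equiv 2\pmod 4$ and $e_x\equiv 1\pmod 4$ for $x\ge 2$: then $e\equiv 3\pmod 4$, so for $x\in\{0,1\}$ we get $\gcd(q-1,e_x)=2$ and $e-e_x$ odd, hence $\gcd(q-1,e-e_x)=1$, while for $x\ge 2$ we get $\gcd(q-1,e_x)=1$ and $e-e_x\equiv 2\pmod 4$, hence $\gcd(q-1,e-e_x)=2$. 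In all cases the two gcd's sum to $3$, which completes the proof.

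The part I expect to be the actual obstacle is this last congruence-mod-$4$ step. The naive symmetric choice — all $e_x$ equal — fails both because $q-1$ may share odd prime factors with $p-1$ (dealt with by the spreading in the previous paragraph) and, more essentially, because when $4\mid q-1$ and $p\equiv 1\pmod 4$ one has $e\equiv 1\pmod 4$ and every $e-e_x$ divisible by $4$, so $\gcd(q-1,e-e_x)\ge 4$. Recognising that one must break the symmetry — and do so with an \emph{even} number of the $e_x$ congruent to $2\pmod 4$, so that $e$ stays odd — is the crux; the remaining verifications are routine.
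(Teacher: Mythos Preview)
Your proof is correct, but it takes a considerably more laborious route than the paper's. The paper first assumes without loss of generality that $p\ge q$, and then simply takes each $e_x\in\{1,2\}$, choosing the number $a$ of indices with $e_x=1$ so that $a\equiv 2p-3\pmod{q-1}$ (possible with $0\le a\le p$ since $q-1\le p$). This forces $e=2p-a\equiv 3\pmod{q-1}$, whence for $e_x=1$ one gets $\gcd(q-1,1)+\gcd(q-1,2)=3$ and for $e_x=2$ one gets $\gcd(q-1,2)+\gcd(q-1,1)=3$. So the whole argument is two lines once one sees that controlling $e$ \emph{globally} modulo $q-1$ suffices. Your approach instead controls residues prime-by-prime via CRT, with separate treatment of the odd part of $q-1$ and a case split on $4\mid q-1$ and $p\bmod 4$; this works, and has the minor aesthetic merit of not needing the WLOG $p\ge q$, but it is heavier machinery for the same endpoint. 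The paper's single congruence $e\equiv 3\pmod{q-1}$ is the cleaner idea you missed.
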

\begin{proof}
We may assume that $p\geq q$. We can then find an integer $a$ such that $0\leq a\leq p$ and $a\equiv 2p-3\pmod{q-1}$. 
Set $e_x:=1$ for $a$ values of $x$ and $e_x:=2$ for the remaining $p-a$ values of $x$. So we have $e=a+2(p-a)\equiv 3\pmod{q-1}$. If $e_x=1$ then $2+\gcd(q-1,e_x)+\gcd(q-1,e-e_x)=2+1+2=5$. If $e_x=2$ then $2+\gcd(q-1,e_x)+\gcd(q-1,e-e_x)=2+2+1=5$. By Lemma~\ref{lem:generalpq}, we conclude that $t(pq)\geq pq-5$.
\end{proof}
Note that this is the best bound we can get from Lemma~\ref{lem:generalpq}. Indeed, if some $e_x$ is even, we get $2+\gcd(q-1,e_x)+\gcd(q-1,e-e_x)\geq 2+2+1=5$. If all $e_x$ are odd, then $e$ is odd because $p$ is odd, hence $e-e_x$ is even and therefore $2+\gcd(q-1,e_x)+\gcd(q-1,e-e_x)\geq 2+1+2=5$.

In the case $q-1\mid p-1$ we can still improve the bound to $t(pq)\geq pq-3$. For this, we need the following lemma.
\begin{lem}\label{lem:circulant}
Let $3\leq q<p$ be prime numbers and assume that $q-1\mid p-1$. Let $f:\Z_p\to \Z_q$ be obtained from a surjective group homomorphism $\Z_p^\times\to \Z_q^\times$ by setting $f(0):=0$. Let $A: \Z_p\times \Z_p\to \Z_q$ be the circulant matrix given by $A_{i+j,i}:=f(j)$. Then $A$ has rank $p-1$.
\end{lem}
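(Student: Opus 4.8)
The plan is to recognise $A$ as a circulant matrix over $\F_q$ and read off its rank from the factorisation of $X^p-1$. The $(r,c)$-entry of $A$ is $f(r-c)$, so $A$ represents, on $\F_q^{\Z_p}$ identified with the group algebra $R:=\F_q[X]/(X^p-1)$, multiplication by $c(X):=\sum_{j=0}^{p-1}f(j)X^j$. Since $q\neq p$, the polynomial $X^p-1$ is separable over $\F_q$; writing $X^p-1=\prod_i m_i(X)$ for its distinct monic irreducible factors, the Chinese Remainder Theorem gives $R\cong\prod_i\F_q[X]/(m_i)$, a product of fields on each of which multiplication by $c(X)$ is zero (if $m_i\mid c$) or invertible (otherwise). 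Hence
\[
\operatorname{rank}(A)=p-\deg\gcd\bigl(c(X),\,X^p-1\bigr),
\]
and it suffices to prove $\gcd(c(X),X^p-1)=X-1$. Here I will use that $X-1$ is the only linear factor of $X^p-1$ over $\F_q$: a root in $\F_q$ has multiplicative order dividing $p$, hence order $1$ since $|\F_q^\times|=q-1<p$. So every irreducible factor of $(X^p-1)/(X-1)$ is the minimal polynomial over $\F_q$ of some primitive $p$-th root of unity in $\overline{\F_q}$, and it is enough to show (i) $c(1)=0$, and (ii) $c(\zeta)\neq 0$ for every primitive $p$-th root of unity $\zeta\in\overline{\F_q}$.

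For (i): restricted to $\Z_p^\times$, the map $f$ is a surjective — hence nontrivial — homomorphism into $\Z_q^\times=\F_q^\times$, so there is $a\in\F_p^\times$ with $f(a)\neq 1$; then $f(a)\sum_{j\in\F_p^\times}f(j)=\sum_{j}f(aj)=\sum_{j}f(j)$, and since $f(a)-1\in\F_q^\times$ this forces $c(1)=\sum_{j\in\F_p^\times}f(j)=0$, i.e.\ $(X-1)\mid c(X)$.

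For (ii): fix $\zeta$ and set $\psi(j):=\zeta^j$, a nontrivial additive character of $\F_p$ valued in $\overline{\F_q}^\times$. Then $g:=c(\zeta)=\sum_{j\in\F_p^\times}f(j)\psi(j)$ is a Gauss sum, which I will show is nonzero by pairing it with $g':=\sum_{j\in\F_p^\times}f(j)^{-1}\psi(-j)$: expanding $gg'=\sum_{s,t\in\F_p^\times}f(s/t)\psi(s-t)$ and substituting $u=s/t$ gives $gg'=\sum_{u\in\F_p^\times}f(u)\sum_{t\in\F_p^\times}\psi\bigl(t(u-1)\bigr)$, whose inner sum is $p-1$ for $u=1$ and $-1$ otherwise (as $\psi$ nontrivial gives $\sum_{v\in\F_p}\psi(v)=0$); using $\sum_{u\in\F_p^\times}f(u)=0$ from (i) this collapses to $gg'=p$. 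As $q\neq p$, the scalar $p\cdot 1$ is nonzero in $\overline{\F_q}$, so $g=c(\zeta)\neq 0$.

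Combining (i) and (ii), $\gcd(c(X),X^p-1)=X-1$ and hence $\operatorname{rank}(A)=p-1$. I expect the only delicate step to be the Gauss-sum identity, which must be run over the positive-characteristic field $\overline{\F_q}$ rather than over $\C$; the classical manipulation carries over verbatim, its only inputs being the vanishing of nontrivial additive and multiplicative character sums over $\F_p$ and the invertibility of $p$ in $\F_q$. (The hypothesis $q-1\mid p-1$ is used only to ensure that a surjective homomorphism $\Z_p^\times\to\Z_q^\times$ exists, so that $f$ is nontrivial.)
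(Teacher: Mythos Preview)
Your proof is correct. The overall architecture matches the paper's: both recognise $A$ as the circulant associated to $c(X)=\sum_j f(j)X^j$, pass to an extension of $\F_q$ containing the $p$-th roots of unity, and reduce the rank computation to showing that $c(1)=0$ while $c(\zeta)\neq 0$ for every primitive $p$-th root $\zeta$. (Your CRT/$\gcd$ formulation and the paper's explicit Fourier diagonalisation are two faces of the same coin.)

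The genuine difference is in how you handle the nonvanishing of $c(\zeta)$. The paper uses a short symmetry argument: writing $D_{j,j}=\sum_i f(i)\zeta^{ij}$, the substitution $i\mapsto ti$ gives $D_{j,j}=f(t)D_{tj,tj}$ for every $t\in\Z_p^\times$, so the $D_{j,j}$ for $j\neq 0$ are all nonzero multiples of one another; if one vanished they would all vanish, forcing $A=0$, a contradiction. You instead carry out the classical Gauss-sum computation $gg'=p$ over $\overline{\F_q}$. Your route is a bit heavier but yields strictly more information (the ``norm'' of the Gauss sum), and it makes the dependence on $p\neq q$ explicit. The paper's route is slicker and avoids any computation: it only needs that $f$ is multiplicative and that $A\neq 0$. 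Both are perfectly valid; the paper's argument is the one to reach for if you want minimal machinery, while yours generalises more readily to statements about the actual eigenvalue magnitudes.
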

\begin{proof}
We identify $\Z_q$ with the field $\F_q$ and let $\F$ be a field extension of $\F_q$ containing a primitive $p$-th root of unity $\zeta$ (for instance $\F=\F_{q^{p-1}}$).  We view $A$ as a matrix over the field $\F$, which does not change its rank. 

Let $Z$ be the Fourier matrix $Z=(\zeta^{ij})_{i,j\in \Z_p}$ with inverse $p^{-1}(\zeta^{-ij})_{i,j\in \Z_p}$. Then $D:=ZAZ^{-1}$ is diagonal with
\[
D_{j,j}=\sum_{i\in \Z_p} f(i)\zeta^{ij}=\sum_{i\in \Z_p^\times} f(i)\zeta^{ij}\text{ for all $j\in \Z_p$.}
\]

Using that $f$ restricts to a surjective group homomorphism $\Z_p^\times\to \Z_q^\times$, we find that  
\[
D_{0,0}=\sum_{i\in \Z_p^\times}f(i)=\frac{p-1}{q-1}\sum_{x\in \Z_q^\times}x=0.
\] 

Let $j\in \Z_p^\times$. Then for every $t\in \Z_p^\times$ we have
\[
D_{j,j}=\sum_{i\in \Z_p^\times} f(i)\zeta^{ij}=\sum_{i\in \Z_p^\times} f(ti)\zeta^{tij}=f(t)\sum_{i\in \Z_p^\times} f(i)\zeta^{itj}=f(t)D_{tj,tj}.
\]
It follows that either $D_{j,j}=0$ for all $j\in \Z_p^\times$ or $D_{j,j}\neq 0$ for all $j\in \Z_p^\times$. The first would imply that $A=0$ which is not the case. So we conclude that all diagonal entries of $D$, except one, are non-zero. So $D$ and $A$ have rank $p-1$.
\end{proof}

\begin{prop} 
\label{prop:q-1dividesp-1}
Let $p,q\geq 3$ be prime numbers with $q-1\mid p-1$. Then $t(pq)\geq pq-3.$
\end{prop}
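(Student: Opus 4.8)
The plan is to use the wreath product construction of Section~\ref{sec:semi-direct}: I will produce $\sigma'\in G=S(\Z_q)\wr S(\Z_p)$ such that $\cyc(\sigma'c^k)\leq 3$ for every integer $k$, which gives $t(pq)\geq t([\sigma'])\geq pq-3$. First dispose of the case $q=p$: then $pq=p^2$ and $t(p^2)\geq t_{\mathrm{aff}}(p^2)=p^2-3$ by Theorem~\ref{thm:affine_general} (with $p$ an odd prime). So assume $3\leq q<p$; then $p\geq 5$ and Lemma~\ref{lem:circulant} is available. Fix a primitive root $a$ modulo $p$ and a primitive root $g$ modulo $q$; since $q-1\mid p-1$ we have $g^{p-1}=1$ in $\Z_q$. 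Composing a surjective homomorphism $\Z_p^\times\to\Z_q^\times$ (which exists because $q-1\mid p-1$) with inversion and sending $0$ to $0$ produces a map $\tilde f:\Z_p\to\Z_q$ of exactly the form required in Lemma~\ref{lem:circulant}. For a function $h:\Z_p\to\Z_q$ to be chosen below, define $\sigma'\in G$ by $\sigma'(x,y):=(ax,\,gy+h(x))$, i.e.\ $\pi'(x)=ax$ and $\pi'_x(y)=gy+h(x)$.

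Fix $k=r+pt$ with $r\in\{0,\dots,p-1\}$, and set $\sigma:=\sigma'c^k$. By Lemma~\ref{lem:sigmarotate}, $\sigma(x,y)=\bigl(a(x+k),\,g(y+w_k(x))+h(x+k)\bigr)$. The first-coordinate map $x\mapsto a(x+k)$ is a cyclic shift of $x\mapsto ax$, and since $a$ is a primitive root, $t([x\mapsto ax])=p-2$ by Proposition~\ref{prop:affine}, so by Lemma~\ref{lem:n-2} this shift has cycle type $(1,p-1)$ --- a fixed point $x_0=x_0(r)$ and a $(p-1)$-cycle $(b_1\,b_2\cdots b_{p-1})$ with $b_i=x_0+a^{i-1}$. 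By Lemma~\ref{lem:lift}: $\{x_0\}\times\Z_q$ is a union of $\cyc(\beta)$ cycles of $\sigma$, where $\beta(y)=g(y+w_k(x_0))+h(x_0+k)$ is affine with multiplier $g$ of order $q-1$, so $\cyc(\beta)=2$; and $(\Z_p\setminus\{x_0\})\times\Z_q$ is a union of $\cyc(\gamma)$ cycles of $\sigma$, where $\gamma=\phi_{b_{p-1}}\circ\cdots\circ\phi_{b_1}$ with $\phi_{b_i}(y)=g(y+w_k(b_i))+h(b_i+k)$. Composing these affine maps and using $g^{p-1}=1$ gives $\gamma(y)=y+C(r)+H(r)$: writing $w_k(b_i)=t+\epsilon_i$ with $\epsilon_i\in\{0,1\}$ and using $\sum_{i=1}^{p-1}g^{p-i}=\tfrac{p-1}{q-1}\sum_{x\in\Z_q^\times}x=0$, the carry contributes the $t$-independent quantity $C(r)=\sum_{i=1}^{p-1}g^{p-i}\epsilon_i$, while $H(r)=\sum_{i=1}^{p-1}g^{p-1-i}h(b_i+k)$. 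Hence $\cyc(\sigma)=2+\cyc(\gamma)$, which is $3$ if $C(r)+H(r)\neq 0$ in $\Z_q$ and $q+2$ otherwise. So it remains to choose $h$ with $C(r)+H(r)\neq 0$ for every $r\in\{0,\dots,p-1\}$.

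This is where Lemma~\ref{lem:circulant} is used. Since $b_i+k\equiv v(r)+a^{i-1}\pmod p$ with $v(r):=x_0(r)+r$ running bijectively over $\Z_p$, one rewrites $H(r)=g^{p-2}\sum_{x\in\Z_p}\tilde f(x-v(r))\,h(x)$. Thus $h\mapsto(H(r))_r$ is an $\F_q$-linear map that, up to a nonzero scalar and a permutation of coordinates, is given by the circulant matrix of Lemma~\ref{lem:circulant} associated to $\tilde f$; that lemma gives rank $p-1$, and since $\sum_{z\in\Z_p}\tilde f(z)=\tfrac{p-1}{q-1}\sum_{x\in\Z_q^\times}x=0$ the all-ones vector lies in the kernel of its transpose, so the image of $h\mapsto(H(r))_r$ is \emph{exactly} the hyperplane $\{w\in\Z_q^{\Z_p}:\sum_r w_r=0\}$. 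Consequently, a suitable $h$ exists if and only if there are units $u_r\in\Z_q^\times$ ($r\in\Z_p$) with $\sum_r u_r=\sum_r C(r)$ in $\Z_q$. As $p\geq 3$ and $q\geq 3$, every element of $\Z_q$ arises this way: start from $u_r=1$ for all $r$ and, if the sum differs from the target, increase $u_0$ (or $u_0$ and $u_1$) by suitable amounts. This yields $t(pq)\geq pq-3$.

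I expect the crux to be the structural claim that the image of $h\mapsto(H(r))_r$ is \emph{exactly} the zero-sum hyperplane: merely knowing that this map has rank $p-1$ would not be enough, since a codimension-one subspace of $\Z_q^{\Z_p}$ need not meet $(\Z_q^\times)^{\Z_p}$. Identifying the hyperplane as $\{\sum_r w_r=0\}$ is precisely what Lemma~\ref{lem:circulant} provides, together with the elementary observation that the all-ones vector is annihilated by the transpose; this, and the identity $g^{p-1}=1$, are the only two points where the hypothesis $q-1\mid p-1$ enters. Everything else --- the affine-composition bookkeeping around the $(p-1)$-cycle, the cancellation of the $t$-dependent part of the carry, and realising any prescribed element of $\Z_q$ as a sum of $p\geq 3$ units --- is routine.
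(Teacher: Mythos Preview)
Your proof is correct and follows essentially the same route as the paper: the same wreath-product permutation $\sigma'(x,y)=(ax,\,gy+h(x))$, the same reduction via Lemma~\ref{lem:lift} to showing that a certain affine expression in the $h(x)$'s is nonzero for every $r$, and the same appeal to Lemma~\ref{lem:circulant} for the rank of the resulting circulant system. The only noteworthy difference is in the endgame: the paper performs column operations to bring the rank-$(p-1)$ circulant into the shape $\begin{bmatrix}0&\alpha\\0&I_{p-1}\end{bmatrix}$ and then solves the reduced avoidance problem directly, whereas you observe that the all-ones vector lies in the left kernel (since the column sums vanish), pin down the image as the zero-sum hyperplane, and recast the avoidance problem as writing a prescribed element of $\Z_q$ as a sum of $p$ units. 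Both finishes are short; yours is arguably a bit cleaner and more transparent about \emph{which} codimension-one subspace is the image---a point the paper's column-reduction leaves implicit. Your separate treatment of $p=q$ via $t_{\mathrm{aff}}(p^2)=p^2-3$ is also fine; the paper instead handles that case inside the same framework by a union-of-hyperplanes count.
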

\begin{proof}
Let $a$ be a generator of $\Z_p^\times$ and define $\pi\in S(\Z_p)$ by $\pi(x):=ax$. Let $g$ be a generator of $\Z_q^\times$. For every $x\in \Z_p$ we define $\pi_x\in S(\Z_q)$ by $\pi_x(y):=gy+b_x$ where the $b_x\in \Z_q$ will be chosen later. We define $\sigma\in G$ by $\sigma(x,y):=(\pi(x),\pi_x(y))$. We will show that we can choose the $b_x$ in such a way that $\sigma c^k$ has $3$ cycles for every $k$. 

Let $r\in \{0,\ldots, p-1\}$ and $t\in \{0,\ldots, q-1\}$ and set $k:=r+tp$. Note that the permutation $x\mapsto \pi(x+r)$ is conjugate to $\pi$. Indeed, setting $d:=ra(1-a)^{-1}\in \Z_p$, we find
\[
\pi(x+r)=ax+ar=ax+(1-a)d=\pi(x-d)+d.
\]
So $x\mapsto \pi(x+r)$ has cycle decomposition $(d)(a^1+d\, a^2+d\,\cdots\, a^{p-1}+d)$.
We have 
\[
\sigma c^k(d,y)=(d, \pi_{d+r}(y+w_k(d))=(d,g(y+w_k(d))+b_{d+r})=(d,gy+(gw_k(d)+b_{d+r})).
\]
Since the permutation $y\mapsto gy+b$ has two cycles for every $b$, the set $\{d\}\times\Z_q$ decomposes into $2$ cycles of $\sigma c^k$ by Lemma~\ref{lem:lift}.

We now consider the cycle $(a^1+d\, a^2+d\cdots a^{p-1}+d)$. We have $(\sigma c^k)^{p-1}(a^1+d,y)=(a^1+d,\pi'(y))$, where $\pi'$ is the composition
\[
\pi'=(y\mapsto g(y+w_k(a^{p-1}+d))+b_{a^{p-1}+d+r})\circ \cdots \circ(y\mapsto g(y+w_k(a^1+d))+b_{a^1+d+r}).
\]
Using that $w_k(x)=t+w_r(x)$, and writing $\beta_i^{(r)}:=b_{a^i+d+r}+gw_r(a^i+d)$ we obtain
\[
\pi'(y)=g^{p-1}y+t(g+\cdots+g^{p-1})+(g^0\beta_{p-1}^{(r)}+\cdots+g^{p-2}\beta_1^{(r)})=y+(g^0\beta_{p-1}^{(r)}+\cdots+g^{p-2}\beta_1^{(r)}).
\]
Here, the second equation follows from the fact that $g^{p-1}=1$ and $g+g^2+\cdots+g^{p-1}=0$ (as $q-1\mid p-1$ and $g$ is a generator of $\Z_q^\times$). If $g^0\beta_{p-1}^{(r)}+\cdots+g^{p-2}\beta_1^{(r)}\neq 0$, then $\pi'$ has only one cycle
and therefore $\{a^1+d,\ldots, a^{p-1}+d\}\times \Z_q$ partitions into only one cycle by Lemma~\ref{lem:lift}. So in that case $\sigma c^k$ has exactly $3$ cycles.  

Write $s:=d+r=r(1-a)^{-1}\in \Z_p$. So we can write
\[
g^0\beta_{p-1}^{(r)}+\cdots+g^{p-2}\beta_1^{(r)} = \sum_{i=1}^{p-1}g^{-i}b_{a^i+s} -u_r,
\]
where $u_r$ is an element in $\Z_q$ that is independent of the values of the $b_x$ that we are about to choose. Since $s=r(1-a)^{-1}$ runs over $\Z_p$ when $r$ runs over $\{0,\ldots, p-1\}$, we can rename $v_s:=u_r$. To complete the proof, it now suffices to show that there exist elements $b_x\in \Z_q$ such that 
\begin{equation}\label{eq:hyperplanes}
\sum_{i=1}^{p-1}g^{-i}b_{a^i+s}=v_s
\end{equation}
is \emph{false} for every $s\in \Z_p$.

Note that if $q=p$ then this follows as the $p$ hyperplanes in $\F_p^p$ corresponding to the equations in (\ref{eq:hyperplanes}) pairwise intersect, which implies that their union has fewer than $p\cdot p^{p-1}=p^p$ elements (i.e. their union is a strict subset of the whole space $\Z_p^p$). So we can assume that $p>q$. 

Observe that the coefficient matrix of the variables $b_x$ is a circulant matrix $A_{s,j+s}:=f(j)$ where $f(0):=0$ and $f(a^i):=g^{-i}$. That is, $\sum_{i=1}^{p-1}g^{-i}b_{a^i+s}=\sum_{j\in \Z_p}A_{s,j+s}b_{j+s}$.
It follows from Lemma~\ref{lem:circulant} that the transpose of $A$ has rank $p-1$. Also note that $A$ has no zero row. So by applying column operations to $A$ and relabeling rows and columns, we can obtain a matrix $A'=\begin{bmatrix}0&\alpha_1,\ldots, \alpha_{p-1}\\0&I_{p-1}\end{bmatrix}\in \Z_q^{p\times p}$, where the $\alpha_1,\ldots, \alpha_{p-1}$ are not all zero. The problem is thus reduced to finding $b'_0,\ldots, b'_{p-1}\in \Z_q$ such that $b'_i\neq v'_i$ for $i=1,\ldots, p-1$ and $\sum_{i=1}^{p-1}\alpha_ib'_i\neq v'_0$, where $v'$ is equal to $v$ up to relabeling the indices.
\end{proof}
Since we showed in Theorem~\ref{thm:div_by_2_or_3} that $t(n)\leq n-3$ when $n$ is divisible by $3$, this proves Corollary~\ref{cor:3p-3}.

When $p,q$ are distinct odd primes, Theorem~\ref{thm:affine_general} shows that $t_{\mathrm{aff}}(pq)=pq-3-\gcd(p-1,q-1)$ and when $p=q$, $t_{\mathrm{aff}}(p^2)=p^2-3$. In particular, if $n=pq$ for $p,q$ odd primes, then our constructions above are at least as good and often better than directly using affine functions. 

\smallskip

The following lemma shows that the construction $\sigma$ studied in this section always has a cyclic shift with at least $3$ cycles and therefore remains in line with Conjecture~\ref{conj:non-prime}.
\begin{lem}
\label{lem:semi_direct_limitation_3cycles}
Let $m,n\geq 2$. 
Let
\[
\sigma(x,y):=(\pi(x),\pi_x(y))
\]
where $\pi\in S(\Z_m)$ and $\pi_x\in S(\Z_n)$ for all $x\in \Z_m$. Let $i\in \Z_m$ be given and let $t$ be an integer such that $y\mapsto \pi_i(y+t)$ has at least $b$ cycles.
Then $\sigma c^k$ has at least $b+1$ cycles for some $k$. 
\end{lem}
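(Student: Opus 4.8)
The plan is to produce a single favourable cyclic shift $\sigma c^k$. The guiding idea is to rotate so that, inside $\sigma c^k$, the index $j:=\pi(i)$ becomes a fixed point of the first-coordinate permutation and the column map at $j$ becomes $\pi_i$ precomposed with a shift; then the fibre $\{j\}\times\Z_n$ is an invariant set on which $\sigma c^k$ acts exactly as $y\mapsto\pi_i(y+t)$, contributing at least $b$ cycles, while the remaining fibres, forming a non-empty invariant set, contribute at least one more.

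I would begin from Lemma~\ref{lem:sigmarotate}, which gives $\sigma c^k(x,y)=(\pi(x+k),\pi_{x+k}(y+w_k(x)))$; in particular $\sigma c^k$ is again of the form~(\ref{eq:semidirect}), with first-coordinate permutation $x\mapsto\pi(x+k)$ and column maps $y\mapsto\pi_{x+k}(y+w_k(x))$, so Lemma~\ref{lem:lift} applies to it. Put $j:=\pi(i)$ and choose $r\in\{0,\dots,m-1\}$ with $r\equiv i-j\pmod m$. Then for any $k=r+mt'$ we have $j+k\equiv i\pmod m$, hence $\pi(j+k)=\pi(i)=j$, so $j$ is a fixed point of $x\mapsto\pi(x+k)$. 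I would then use the remaining freedom in $t'$ to arrange $w_k(j)\equiv t\pmod n$: by definition $w_k(j)$ equals $t'$ if $j\le m-1-r$ and $t'+1$ otherwise, so a suitable integer $t'$ works in either case. Fix this $k$.

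With this choice, $\{j\}\times\Z_n$ is invariant under $\sigma c^k$, and $\sigma c^k$ restricted to it is $(j,y)\mapsto(j,\pi_{j+k}(y+w_k(j)))=(j,\pi_i(y+t))$. Applying Lemma~\ref{lem:lift} to $\sigma c^k$ and its fixed point $j$ (the length-one case), the set $\{j\}\times\Z_n$ splits into exactly as many cycles of $\sigma c^k$ as the permutation $y\mapsto\pi_i(y+t)$ has, which is at least $b$ by hypothesis. On the other hand $(\Z_m\setminus\{j\})\times\Z_n$ is also invariant under $\sigma c^k$ (its first-coordinate permutation is a bijection of $\Z_m$ fixing $j$, hence maps $\Z_m\setminus\{j\}$ onto itself) and is non-empty because $m\ge2$, so it is a union of at least one further cycle. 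Adding these, $\sigma c^k$ has at least $b+1$ cycles, as claimed.

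I do not foresee a genuine obstacle: the argument is a short bookkeeping exercise. The only points that need care are the two-case resolution of $w_k(j)\equiv t\pmod n$ and keeping straight that the fixed column must be taken to be $j=\pi(i)$, not $i$, so that the permutation induced on that fibre really is $\pi_i$ precomposed with a shift, matching the hypothesis.
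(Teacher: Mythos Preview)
Your proof is correct and follows essentially the same approach as the paper: set $x_0=\pi(i)$, pick the residue $r\equiv i-x_0\pmod m$, and then choose $k\in\{r+(t-1)m,\,r+tm\}$ (your two-case choice of $t'$) so that $w_k(x_0)=t$ and hence $\sigma c^k$ fixes the column $\{x_0\}\times\Z_n$ with induced map $y\mapsto\pi_i(y+t)$. Your write-up is in fact slightly more explicit than the paper's about the case split for $w_k(j)$ and about why the complementary set contributes an extra cycle.
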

\begin{proof}
Let $x_0:=\pi(i)$ and let $r\in \{0,\ldots, m-1\}$ be such that $i-x_0=r$. 

By Lemma~\ref{lem:sigmarotate} there is a $k\in \{r+(t-1)m, r+tm\}$ such that for all $y\in \Z_n$ we have 
\[
\sigma c^{k}(x_0,y)=(\pi(x_0+r),\pi_{x_0+r}(y+t))=(x_0,\pi_i(t)).
\]
This implies, by Lemma~\ref{lem:lift}, that  $\sigma c^{k}$ has at least $b$ cycles on $\{x_0\}\times \Z_n$ and so at least $b+1$ in total.
\end{proof}
In Appendix~\ref{sec:B}, we show that when either $m=2, n\geq 3$ or $m\geq 3, n=2$, every permutation $\sigma$ of the form (\ref{eq:semidirect}) has at least $4$ cycles.

\section{Conjecture~\ref{conj:non-prime} for permutation polynomials}
\label{sec:permpolys}
When proving lower bounds for $t(n)$, one of the first questions that arises is: what is a class of permutations with a nice structure? 
We say that a polynomial $f\in \Z_n[x]$ is a \emph{permutation polynomial} if the function $\pi: \Z_n\to \Z_n$ induced by $f$ is a permutation. In that case, we will say that $\pi$ \emph{admits a permutation polynomial}. We denote the set of $\pi\in S(\Z_n)$ that admit a permutation polynomial by $S_{\text{poly}}(\Z_n)$. 
The goal of this section is to show that Conjecture~\ref{conj:non-prime} holds for permutation polynomials: $t([\pi])\leq n-3$ for all $\pi \in S_\mathrm{poly}(\Z_n)$. Moreover, our results in this section show that permutation polynomials will not yield good lower bound examples when $n$ has many prime factors.

For $n$ prime, each permutation $\pi\in S(\Z_n)$ admits a permutation polynomial: 
\[
f(x)=\sum_a \pi(a)\left(1-(x-a)^{n-1}\right).
\]
However, when $n$ is not prime, this is no longer the case. Note that we can restrict ourselves to polynomials of degree $d\leq \varphi(n)-1$ for $\varphi$ the Euler's totient-function.

Recall that for $\pi \in S(\Z_n)$, we define $\cyc(\pi)$ as the number of cycles of $\pi$ in its cycle decomposition. For integers $n\geq 2$, we define
\[
c_{\textup{poly}}(n):=\min_{\pi\in S_{\textup{poly}}(\Z_n)} \max_{k\in [n]}\cyc(x\mapsto \pi(x+k)).
\]

The following lemma will follow easily from the Chinese remainder theorem and proves the lower bound $c_{\textup{poly}}(n)\geq 2^k$ when $n$ has $k$ different prime divisors.
\begin{lem}
\label{lem:product_cpoly}
Let $n$ and $m$ be coprime integers. Then $c_{\textup{poly}}(nm)\geq c_{\textup{poly}}(n)c_{\textup{poly}}(m)$.
\end{lem}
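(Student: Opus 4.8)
The plan is to use the Chinese remainder theorem to decompose a permutation polynomial over $\Z_{nm}$ into a pair of permutation polynomials over $\Z_n$ and $\Z_m$, and to show that the number of cycles of a cyclic shift of the product map is the product of the numbers of cycles of the corresponding cyclic shifts of the two factors. Concretely, fix $\pi\in S_{\textup{poly}}(\Z_{nm})$ realised by a polynomial $f\in\Z_{nm}[x]$ with $\max_k\cyc(x\mapsto \pi(x+k))=c_{\textup{poly}}(nm)$. Reducing the coefficients of $f$ modulo $n$ and modulo $m$ gives polynomials $f_n\in\Z_n[x]$ and $f_m\in\Z_m[x]$; under the ring isomorphism $\Z_{nm}\cong\Z_n\times\Z_m$ from the Chinese remainder theorem, $f$ corresponds to the pair $(f_n,f_m)$ acting coordinatewise. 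Since $\pi$ is a bijection, each of $f_n$ and $f_m$ must induce a bijection (a product map on $\Z_n\times\Z_m$ is a bijection iff each coordinate map is), so $f_n$ and $f_m$ are permutation polynomials; call the induced permutations $\pi_n\in S_{\textup{poly}}(\Z_n)$ and $\pi_m\in S_{\textup{poly}}(\Z_m)$.

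The second step is the cycle count. For an integer $k$, under the identification $\Z_{nm}\cong \Z_n\times\Z_m$ the shift $x\mapsto x+k$ becomes $(x,y)\mapsto(x+k,y+k)$, so the cyclic shift $x\mapsto\pi(x+k)$ corresponds to the product permutation $(x,y)\mapsto(\pi_n(x+k),\pi_m(y+k))$ of $\Z_n\times\Z_m$. For a product permutation $\alpha\times\beta$, the cycle containing $(x,y)$ has length $\lcm$ of the lengths of the cycle of $\alpha$ through $x$ and the cycle of $\beta$ through $y$; a short counting argument (or the standard fact about cycle structure of a direct product) shows that the number of cycles of $\alpha\times\beta$ is at least $\cyc(\alpha)\cdot\cyc(\beta)$ — indeed, a cycle of $\alpha$ of length $a$ together with a cycle of $\beta$ of length $b$ contributes $ab/\lcm(a,b)=\gcd(a,b)\geq 1$ cycles of $\alpha\times\beta$, and summing over all pairs of cycles gives $\sum_{\text{cycles}}\gcd(a_i,b_j)\geq \cyc(\alpha)\cyc(\beta)$. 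Applying this with $\alpha=(x\mapsto\pi_n(x+k))$ and $\beta=(y\mapsto\pi_m(y+k))$ yields
\[
\cyc(x\mapsto\pi(x+k)) \;\geq\; \cyc(x\mapsto\pi_n(x+k))\cdot\cyc(y\mapsto\pi_m(y+k)) \;\geq\; c_{\textup{poly}}(n)\cdot c_{\textup{poly}}(m),
\]
where the last inequality holds because $\pi_n\in S_{\textup{poly}}(\Z_n)$ forces $\max_{k}\cyc(x\mapsto\pi_n(x+k))\geq c_{\textup{poly}}(n)$ and similarly for $m$, and here we may choose a single $k$ — but note we actually need the same $k$ to work for both lower bounds simultaneously. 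To handle that, pick $k^\ast$ achieving $\max_k\cyc(x\mapsto\pi_n(x+k))\geq c_{\textup{poly}}(n)$; automatically $\cyc(y\mapsto\pi_m(y+k^\ast))\geq c_{\textup{poly}}(m)$ as well since that max over $k$ is $\geq c_{\textup{poly}}(m)$... this is not quite right, so instead one argues: for every $k$, $\cyc(x\mapsto\pi(x+k))\geq \cyc(x\mapsto\pi_n(x+k))\cdot\cyc(y\mapsto\pi_m(y+k))$, hence $\max_k \cyc(x\mapsto\pi(x+k)) \geq \max_k\big[\cyc(x\mapsto\pi_n(x+k))\cdot\cyc(y\mapsto\pi_m(y+k))\big] \geq \big(\max_k\cyc(x\mapsto\pi_n(x+k))\big)\cdot 1$, which is too weak. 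The correct route: choose $k_0$ with $\cyc(x\mapsto\pi_n(x+k_0))=\max_k\cyc(x\mapsto\pi_n(x+k))\geq c_{\textup{poly}}(n)$ and independently observe that whatever $k_0$ is, $\cyc(y\mapsto\pi_m(y+k_0))\geq 1$; this still only gives $c_{\textup{poly}}(n)$. Hence the genuine argument must instead exploit that the shifts are coupled but the max is taken over all of $\Z_{nm}$: since $\gcd(n,m)=1$, as $k$ ranges over $\Z_{nm}$ the pair $(k\bmod n, k\bmod m)$ ranges over all of $\Z_n\times\Z_m$, so we may choose $k$ with $k\equiv k_n\pmod n$ and $k\equiv k_m\pmod m$ where $k_n$ maximises $\cyc(x\mapsto\pi_n(x+\cdot))$ and $k_m$ maximises $\cyc(y\mapsto\pi_m(y+\cdot))$; for this $k$, $\cyc(x\mapsto\pi(x+k))\geq c_{\textup{poly}}(n)c_{\textup{poly}}(m)$, and taking the max over $k$ only increases this.

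**Main obstacle.** The conceptual content is routine, but the step requiring care — and the one I expect to be the crux of a clean write-up — is precisely this last coupling issue: one must use $\gcd(n,m)=1$ a second time, beyond the ring decomposition, to see that the two shift parameters can be chosen independently via the Chinese remainder theorem on the index $k$, so that a single cyclic shift of $\pi$ simultaneously witnesses the near-extremal cycle counts of $\pi_n$ and $\pi_m$. The cycle-count inequality $\cyc(\alpha\times\beta)\geq\cyc(\alpha)\cyc(\beta)$ via $\sum\gcd(a_i,b_j)$ is standard and should be stated with a one-line justification. Finally, one concludes $c_{\textup{poly}}(nm)=\min_{\pi}\max_k\cyc(x\mapsto\pi(x+k))\geq c_{\textup{poly}}(n)c_{\textup{poly}}(m)$, since the displayed bound holds for every $\pi\in S_{\textup{poly}}(\Z_{nm})$.
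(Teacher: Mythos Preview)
Your argument is correct and follows essentially the same route as the paper: reduce the polynomial modulo $n$ and $m$ to get $\pi_n,\pi_m\in S_{\textup{poly}}$, then use the Chinese remainder theorem a second time to choose a single shift $k$ that simultaneously realises the maximal cycle counts for $\pi_n$ and $\pi_m$. The paper phrases the product cycle bound via invariant sets (each pair of cycles $A_1,A_2$ gives an invariant block $A_1\times A_2$ containing at least one cycle) rather than your $\sum\gcd(a_i,b_j)$ formula, but these are equivalent; your exposition would benefit from removing the false starts and going straight to the CRT choice of $k$.
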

\begin{proof}
By the Chinese remainder theorem, the map
\[
h:\Z_{mn}\to \Z_{n}\times \Z_m,\quad x\mapsto (x\bmod n,x\bmod m)
\]
is a ring isomorphism. Let $f\in S_{\textup{poly}}(\Z_{nm})$ and let $a_0,\dots,a_d\in \Z_{nm}$ such that
\[
\pi(x)=a_0+a_1x+\dots+a_dx^d, \text{ for all }x\in \Z_{nm}.
\]
We define $f_1(y)\in \Z_{n}[y]$ as the permutation polynomial
\[
f_1(y):=
(a_0 \bmod n)+(a_1\bmod n) y+\dots+ (a_d\bmod n)y^d.
\]
and similarly $f_2(y)\in \Z_{m}[y]$ as 
\[
f_2(y):=
(a_0 \bmod m)+(a_1\bmod m) y+\dots+ (a_d\bmod m)y^d.
\]
With $h$ the ring isomorphism from the Chinese remainder theorem,
for each $x\in \Z_{nm}$
\[
h(f(x))=(f_1(x\bmod n),f_2(x\bmod m)).
\]
This gives $f$ a `product structure' and in fact that is the key property that we will need for the remainder of the argument. 

By definition, there exist $i_1\in \Z_n$ such that $s_1:x\mapsto f_1(x+i_1)$ has $n_0\geq c_{\text{poly}}(n)$ cycles and $i_2\in \Z_m$ such that $s_2:x\mapsto f_2(x+i_2)$ has $m_0\geq c_{\text{poly}}(m)$ cycles. Let $i=h^{-1}(i_1,i_2)\in \Z_{nm}$. We show that $x\mapsto f(x+i)$ has at least $n_0m_0\geq c_{\text{poly}}(n)c_{\text{poly}}(m)$ cycles. 
Note that  \[
h(f(x+i))=(f_1,f_2)(h(x+i))=(s_1(x\bmod n),s_2(x\bmod m)).
\]
In particular, for any two sets $A_1\subseteq \Z_n$ and $A_2\subseteq \Z_m$ with $s_1(A_1)=A_1$ and $s_2(A_2)=A_2$, we find that $(s_1,s_2)(A_1\times A_2)= A_1\times A_2$ and so $x\mapsto f(x+i)$ has at least one cycle fully contained in $h^{-1}(A_1\times A_2)$. This provides at least $n_0m_0$ cycles of $x\mapsto f(x+i)$. 
\end{proof}
By the lemma above, we are left in the case in which $n$ has only one type of prime divisor, that is, $n=p^k$ for $p$ prime and $k\geq 2$ an integer. In this case, rather than having a direct product structure, it turns out all permutation polynomials have the wreath product structure studied in Section~\ref{sec:semi-direct} as shown in the proof below.
\begin{lem}
\label{lem:semi_product_cpoly}
    Let $m,n\geq 2$ be integers. Then $c_{\textup{poly}}(mn)\geq c_{\textup{poly}}(n)+1$.
\end{lem}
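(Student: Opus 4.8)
The plan is to show that every permutation polynomial $f \in \Z_{mn}[x]$ actually induces a permutation of $\Z_{mn}$ that has the wreath-product form $\sigma(x,y) = (\pi(x), \pi_x(y))$ studied in Section~\ref{sec:semi-direct}, with $m$ columns and $n$ rows, and then invoke Lemma~\ref{lem:semi_direct_limitation_3cycles}. Concretely, identify $\Z_{mn}$ with $\Z_m \times \Z_n$ via the bijection $r + mt \mapsto (r,t)$ (the same identification used in Section~\ref{sec:semi-direct}). Under this identification, reduction mod $m$ sends $(r,t) \mapsto r$, i.e.\ the first coordinate only depends on the first coordinate of the input: writing $f(x) = a_0 + a_1 x + \cdots + a_d x^d$, for any $x \equiv x' \pmod m$ we have $f(x) \equiv f(x') \pmod m$. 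Hence $f$ descends to a well-defined map $\pi : \Z_m \to \Z_m$, given by the polynomial $f$ reduced mod $m$; since $f$ is a bijection on $\Z_{mn}$ and the fibres of reduction mod $m$ all have size $n$, $\pi$ must be a bijection on $\Z_m$. Thus $f$, viewed on $\Z_m \times \Z_n$, has the form $f(r,t) = (\pi(r), \pi_r(t))$ for some maps $\pi_r : \Z_n \to \Z_n$ (one for each column $r$), and because $f$ is a bijection each $\pi_r$ must be a bijection of $\Z_n$. So $f$ lies in the group $G \cong S(\Z_n) \wr S(\Z_m)$ from Section~\ref{sec:semi-direct}.

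Next I would check that the cyclic shift in this setting matches. Translations $x \mapsto f(x+k)$ on $\Z_{mn}$ correspond, under the identification $r+mt \mapsto (r,t)$, to $f \circ c^k$ where $c$ is exactly the permutation in \eqref{eq:c_semidirect}; this was already observed in Section~\ref{sec:semi-direct} (``the cyclic shift map $z \mapsto z+1$ corresponds to $c$''). Therefore $\max_{k}\cyc(x \mapsto f(x+k)) = \max_k \cyc(f c^k)$, and Lemma~\ref{lem:semi_direct_limitation_3cycles} applies to $\sigma := f$. To use that lemma we need a lower bound $b$ on the number of cycles of some shift $y \mapsto \pi_i(y+t)$ of some column permutation. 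Here I would argue: one of the column maps $\pi_i$ is itself (up to relabeling) a permutation of $\Z_n$ induced by a polynomial over $\Z_n$ — indeed, restricting $f$ to a single fibre $\{i\} \times \Z_n$ and composing with appropriate translations recovers such a map — so by definition of $c_{\textup{poly}}(n)$, some cyclic shift of it has at least $c_{\textup{poly}}(n)$ cycles; more simply, one can take any column $\pi_i$ and note that among its cyclic shifts (shifting $t$) at least one has $\ge 1$ cycle, but to get the bound $c_{\textup{poly}}(n)$ we should extract an honest permutation polynomial of $\Z_n$ from $f$.

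The extraction step is the main point to get right, so let me spell out the intended route. Fix a column index $i \in \Z_m$ and consider the $n$ inputs $x$ with $x \equiv i \pmod m$, namely $x = i + m t$ for $t \in \{0,\dots,n-1\}$. Their images under $f$ all lie in the single column $\pi(i)$, i.e.\ $f(i+mt) = \pi(i) + m\,g_i(t)$ for a bijection $g_i : \Z_n \to \Z_n$. I claim $g_i$ is induced by a polynomial over $\Z_n$: substituting $x = i + mt$ into $f$ and expanding gives a polynomial in $t$ with coefficients in $\Z_{mn}$, whose value is $\equiv \pi(i) \pmod m$ for all $t$; subtracting $\pi(i)$ and dividing by $m$ is a well-defined operation $\Z_{mn}/(\text{multiples of }m) \cong \Z_n$ on the values, and one checks it turns the polynomial identity into a polynomial identity mod $n$ (this is the routine calculation I would not grind through, but it is exactly the statement that $(f(i+mt) - \pi(i))/m \bmod n$ is polynomial in $t \bmod n$). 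Then by definition of $c_{\textup{poly}}(n)$ there is a shift $t \mapsto g_i(t + s)$ with at least $c_{\textup{poly}}(n)$ cycles; this shift is realized by a suitable $c$-power shift of $f$ restricted to column $\pi(i)$, i.e.\ it is a shift of one of the column maps $\pi_j$ of $f$. Feeding $b = c_{\textup{poly}}(n)$ into Lemma~\ref{lem:semi_direct_limitation_3cycles} yields a shift $f c^k$ with at least $c_{\textup{poly}}(n) + 1$ cycles, hence $\max_k \cyc(x \mapsto f(x+k)) \ge c_{\textup{poly}}(n)+1$ for every $f \in S_{\textup{poly}}(\Z_{mn})$, which is exactly $c_{\textup{poly}}(mn) \ge c_{\textup{poly}}(n)+1$. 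The main obstacle is precisely verifying that the single-column restriction of a permutation polynomial is again a permutation polynomial of $\Z_n$ (the ``divide by $m$'' step and its compatibility with polynomial substitution); everything else is bookkeeping with the identification $\Z_{mn} \cong \Z_m \times \Z_n$ and a direct appeal to the already-proved Lemma~\ref{lem:semi_direct_limitation_3cycles}.
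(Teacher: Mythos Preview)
Your proposal is correct and follows essentially the same route as the paper: identify $\Z_{mn}\cong\Z_m\times\Z_n$ via $r+mt\mapsto(r,t)$, show that a permutation polynomial $f$ induces a map of wreath-product form $\sigma(r,t)=(\pi(r),\pi_r(t))$ with each $\pi_r$ itself a permutation polynomial over $\Z_n$, and then apply Lemma~\ref{lem:semi_direct_limitation_3cycles} with $b=c_{\textup{poly}}(n)$. The paper carries out the step you flag as the main obstacle (your ``divide by $m$'' step) by an explicit binomial expansion of $f(r+qm)$, obtaining the polynomial $g_r(q)=Q(f(r))+\sum_{j\ge1}q^jm^{j-1}\sum_{i\ge j}\binom{i}{j}r^{i-j}a_i$ whose reduction mod $n$ is $\pi_r$; your sketch of this step is accurate, and the paper's computation is exactly the verification you say you ``would not grind through''. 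One small clarification: your $g_i$ is already the column map $\pi_i$, so once it is known to be a permutation polynomial you can invoke the definition of $c_{\textup{poly}}(n)$ and Lemma~\ref{lem:semi_direct_limitation_3cycles} directly, without the intermediate remark about realizing the shift via a $c$-power shift of $f$ restricted to a column.
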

\begin{proof}
For $x\in \Z_{nm}$, we denote by $R(x)\in\{0,\ldots, m-1\}$ and $Q(x)\in \{0,\ldots, n-1\}$ the unique numbers such that $x=R(x)+mQ(x) \bmod{mn}$. This gives a bijection 
\[
h:\Z_{mn}\to \Z_m\times \Z_{n}, \quad x\mapsto (R(x) \bmod{m},Q(x) \bmod{n}).
\]
For any $\sigma\in S(\Z_{mn})$, the permutation $h\circ\sigma \circ h^{-1}\in S(\Z_m\times \Z_n)$ has the same cycle type as $\sigma$. Moreover, the cyclic shift $(x\mapsto x+1)\in S(\Z_{mn})$ corresponds to the map $c\in G\subseteq S(\Z_m\times \Z_n)$ from~(\ref{eq:c_semidirect}). That is, $c(x,y)=h(h^{-1}(x,y)+1)$.

Let $f=a_d z^d+\dots+a_1z+a_0\in \Z_{mn}[z]$ be a permutation polynomial. Let $\sigma\in S(\Z_m\times \Z_n)$ be the induced map $\sigma(x,y)=h(f(h^{-1}(x,y)))$. Note that for every integer $k$ the permutation induced by $f(z)+k$ and the permutation $\sigma\circ c^k$ have the same cycle type. It therefore suffices to show that that there is an integer $k$ such that the permutation $\sigma\circ c^k$ has at least $c_{\textup{poly}}(n)+1$ cycles. To do this, we will first show that $\sigma$ is of the form $\sigma(x,y)=(\pi(x),\pi_x(y))$ as in (\ref{eq:semidirect}).

For $r\in \{0,\ldots, m-1\}$ and $q\in \{0,\ldots,n-1\}$ we have 
\begin{align*}
f(r+qm)
&=\sum_{i=0}^d a_i(qm+r)^i\\
&=\sum_{i=0}^d a_i\sum_{j=0}^i\binom{i}{j}r^{i-j}(qm)^j\\
&=\sum_{i=0}^d a_ir^i+m\left(\sum_{j=1}^d q^j m^{j-1}\sum_{i=j}^d\binom{i}{j}r^{i-j}a_i\right)\\
&=f(r)+m\left(\sum_{j=1}^d q^j m^{j-1}\sum_{i=j}^d\binom{i}{j}r^{i-j}a_i\right)\\
&=R(f(r))+m\left(Q(f(r))+\sum_{j=1}^d q^j m^{j-1}\sum_{i=j}^d\binom{i}{j}r^{i-j}a_i\right).
\end{align*}
It follows that for $(r,q)\in \Z_m\times \Z_n$ we have 
\[
\sigma(r,q)=(\pi(r), \pi_r(q))
\]
where $\pi\in \Z_m[x]$ is obtained from $f$ by reducing the coefficients modulo $m$, and $\pi_r\in \Z_n[y]$ is obtained from
\[
g_r(y):=Q(f(r))+\sum_{j=1}^d y^j m^{j-1}\sum_{i=j}^d\binom{i}{j}r^{i-j}a_i\in \Z_{mn}[y]\]
by reducing the coefficients modulo $n$.
Since $\sigma$ is surjective, so are the functions induced by $\pi$ and the $\pi_r$. It follows that $\pi$ and the $\pi_r$ are permutation polynomials.  

Let $t\in \Z_n$ be such that $y\mapsto \pi_0(y+t)$ has at least $c_{\textup{poly}}(n)$ cycles. Then by Lemma~\ref{lem:semi_direct_limitation_3cycles} there is a $k$ so that $\sigma\circ c^k$ has at least $c_{\textup{poly}}(n)+1$ cycles, completing the proof.
\end{proof}
In the proof above, we used the fact that each permutation polynomial over $\Z_{nm}$ can be written in the `wreath product form', which has been shown in more generality in \cite[Lemma 2.3]{gorcsos2018permutation}. 

We remark that permutation polynomials and their properties (including cycle structure) have been widely studied, especially with coefficients in a finite field (see e.g. the surveys~\cite{Hou15survey,Wang2025survey} for references). However, we were unable to find works which considered the cycle structure of the permutation \emph{and its cyclic shifts}, though connections with the `wreath product form' have been made previously for permutation polynomials (over finite fields), for example by 
Bors and Wang~\cite{BorsWang2022cosetwiseaffinefunctionswreath}.

The following is a corollary of Lemma~\ref{lem:semi_product_cpoly} and the fact that each permutation has a cyclic shift with at least $2$ cycles.
\begin{cor}[Restatement of Proposition~\ref{prop:permpoly}]
If $n$ is composite, then
    $c_{\textup{poly}}(n)\geq 3$.
\end{cor}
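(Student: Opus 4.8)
The plan is to deduce the corollary from the two structural lemmas already established, by a short case analysis on the number of distinct prime divisors of $n$. The key inputs are Lemma~\ref{lem:product_cpoly} (multiplicativity over coprime factors) and Lemma~\ref{lem:semi_product_cpoly} (the bound $c_{\textup{poly}}(mn)\geq c_{\textup{poly}}(n)+1$ for any $m,n\geq 2$), together with the trivial fact that $c_{\textup{poly}}(k)\geq 2$ for every $k\geq 2$: each permutation of $\Z_k$ has a cyclic shift whose cycle decomposition contains a fixed point and hence, since it cannot be the identity unless $\pi$ itself is a single fixed point on a set of size at least $2$... more carefully, since a permutation of a set of size $k\geq 2$ cannot be the identity and have a cyclic shift equal to a $k$-cycle simultaneously be avoided --- the clean statement is that some cyclic shift has at least $2$ cycles because not all cyclic shifts can be $k$-cycles (for instance the one fixing a given point has at least two cycles when $k\geq 2$, unless it is a transposition, which still has at least... ). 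I will instead just cite that every $\pi\in S(\Z_k)$ has some cyclic shift with a fixed point, and any permutation of a $\geq 2$-element set with a fixed point has at least $2$ cycles, giving $c_{\textup{poly}}(k)\geq 2$.

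First I would write $n=p_1^{a_1}\cdots p_r^{a_r}$ with $r\geq 1$ (since $n$ is composite, $n\geq 4$). If $r\geq 2$, then $n=n_1n_2$ with $n_1,n_2\geq 2$ coprime, so by Lemma~\ref{lem:product_cpoly} and the trivial bound, $c_{\textup{poly}}(n)\geq c_{\textup{poly}}(n_1)c_{\textup{poly}}(n_2)\geq 2\cdot 2=4\geq 3$. If $r=1$, then $n=p^a$ with $a\geq 2$ (composite), so we may write $n=mn'$ with $m=p\geq 2$ and $n'=p^{a-1}\geq 2$; applying Lemma~\ref{lem:semi_product_cpoly} gives $c_{\textup{poly}}(n)\geq c_{\textup{poly}}(n')+1\geq 2+1=3$. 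In either case $c_{\textup{poly}}(n)\geq 3$, which is exactly the claimed statement (equivalently, Proposition~\ref{prop:permpoly}: $t([\pi])=n-\cyc(\pi)\leq n-3$ for every $\pi\in S_{\textup{poly}}(\Z_n)$, since some cyclic shift achieves $\cyc\geq 3$).

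There is really no hard step here: the corollary is a bookkeeping consequence of the lemmas, and the only thing requiring a sentence of care is the base fact $c_{\textup{poly}}(k)\geq 2$, i.e.\ that one cannot simultaneously make every cyclic shift of a permutation of $\Z_k$ ($k\geq 2$) a single $k$-cycle. The cleanest justification: the cyclic shift $\pi\circ c^{-k_0}$ fixing a chosen point $x_0$ (where $k_0$ solves $\pi(x_0-k_0)=x_0$, or rather $\pi(x_0+k)=x_0$ for the appropriate $k$) has that fixed point, and a permutation of a set of size $\geq 2$ with a fixed point has at least two cycles --- unless it is the identity, which is fine since the identity has $k\geq 2$ cycles anyway. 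So $c_{\textup{poly}}(k)\geq 2$ holds unconditionally for $k\geq 2$. I would state this in one line and then present the two-case argument above.

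\begin{proof}
If $n$ is composite, write its prime factorisation as $n=p_1^{a_1}\cdots p_r^{a_r}$ with $r\geq 1$. We first note that $c_{\textup{poly}}(k)\geq 2$ for every integer $k\geq 2$: every permutation $\pi\in S(\Z_k)$ has a cyclic shift with a fixed point (since $\pi(x+j)=x$ has a solution $j\in \Z_k$ for any fixed $x$), and a permutation of a $k$-element set with $k\geq 2$ that has a fixed point has at least two cycles in its cycle decomposition.

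If $r\geq 2$, then $n=n_1n_2$ for coprime integers $n_1,n_2\geq 2$, so by Lemma~\ref{lem:product_cpoly},
\[
c_{\textup{poly}}(n)\geq c_{\textup{poly}}(n_1)\,c_{\textup{poly}}(n_2)\geq 2\cdot 2=4\geq 3.
\]
If $r=1$, then $n=p^a$ with $a\geq 2$, and we may write $n=mn'$ with $m=p\geq 2$ and $n'=p^{a-1}\geq 2$. By Lemma~\ref{lem:semi_product_cpoly},
\[
c_{\textup{poly}}(n)\geq c_{\textup{poly}}(n')+1\geq 2+1=3.
\]
In both cases $c_{\textup{poly}}(n)\geq 3$. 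Consequently, for every $\pi\in S_{\textup{poly}}(\Z_n)$ some cyclic shift $x\mapsto \pi(x+k)$ has at least $3$ cycles, so $t([\pi])=n-\cyc(\pi)\leq n-3$.
\end{proof}
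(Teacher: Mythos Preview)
Your proof is correct and follows essentially the same approach as the paper, which simply combines Lemma~\ref{lem:semi_product_cpoly} with the base fact $c_{\textup{poly}}(k)\geq 2$. Your case split on the number of prime divisors is unnecessary, since Lemma~\ref{lem:semi_product_cpoly} does not require the factors to be coprime and therefore applies directly to any composite $n=mn'$ with $m,n'\geq 2$; also, in your final sentence the equality $t([\pi])=n-\cyc(\pi)$ should read $t([\pi])=n-\max_k\cyc(\pi c^k)$.
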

Moreover, the lemmas allow us to determine that permutations given by affine functions are the most difficult to cyclically sort, amongst the permutation polynomials, for certain values of $n$.
\begin{cor}
If $p\geq 3$ is prime and $k\geq 1$ an integer, then
    $c_{\textup{poly}}(p^k)=k+1$ and
    $c_{\textup{poly}}(2p^k)=2(k+1)$.
\end{cor}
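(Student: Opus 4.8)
The plan is to prove the two equalities by combining the general lower bounds from Lemma~\ref{lem:product_cpoly} and Lemma~\ref{lem:semi_product_cpoly} with matching upper bounds coming from explicit affine permutation polynomials analysed via Proposition~\ref{prop:affine}. The lower bounds are the easy direction. For $n=p^k$ with $p$ an odd prime, iterate Lemma~\ref{lem:semi_product_cpoly} $(k-1)$ times, starting from the base case $c_{\textup{poly}}(p)\geq 2$ (which holds since $p$ is not $1$, so every permutation of $\Z_p$ has a cyclic shift with at least $2$ cycles — indeed this is the ``each permutation has a cyclic shift with at least $2$ cycles'' fact already used for the previous corollary); this gives $c_{\textup{poly}}(p^k)\geq (k-1)+2=k+1$. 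For $n=2p^k$, first apply Lemma~\ref{lem:product_cpoly} with the coprime factorisation $2\cdot p^k$ to get $c_{\textup{poly}}(2p^k)\geq c_{\textup{poly}}(2)\,c_{\textup{poly}}(p^k)$. Since $2$ is prime, $c_{\textup{poly}}(2)=2$, and combined with the bound just proved this yields $c_{\textup{poly}}(2p^k)\geq 2(k+1)$.

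For the matching upper bounds I would exhibit a single affine permutation polynomial achieving the bound. Take $\pi(x)=ax$ on $\Z_{p^k}$ where $a$ is a generator of the cyclic group $\Z_{p^k}^\times$ (possible since $p$ is odd). By Lemma~\ref{lem:affine_max}, over all cyclic shifts $\pi_j(x)=ax+j$ the maximum number of cycles is attained at $j=0$, so $\max_j\cyc(\pi_j)=\cyc(\pi)$, and by Lemma~\ref{lem:affine_cycle} this equals $\sum_{d\mid p^k}\varphi(d)/\ord_d(a)$. Because $a$ generates $\Z_{p^k}^\times$ it also generates $\Z_{p^i}^\times$ for every $i\le k$ (reduction mod $p^i$ is surjective on unit groups), so $\ord_{p^i}(a)=\varphi(p^i)$ for $i\ge 1$ and $\ord_1(a)=1$; hence the sum telescopes to $1 + \underbrace{1+\dots+1}_{k}=k+1$. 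Since $x\mapsto ax$ is a permutation polynomial, this shows $c_{\textup{poly}}(p^k)\le k+1$. For $n=2p^k$ take $a$ coprime to $2p^k$ with $a\equiv 3\pmod{2}$ (i.e. $a$ odd, automatic) and $a$ a generator of $\Z_{p^k}^\times$; the divisors of $2p^k$ are $p^i$ and $2p^i$ for $0\le i\le k$, and using $\ord_{2p^i}(a)=\lcm(\ord_2(a),\varphi(p^i))=\varphi(p^i)$ one gets $\max_j\cyc(\pi_j)=\cyc(\pi)=\sum_{d\mid 2p^k}\varphi(d)/\ord_d(a)=(k+1)+(k+1)=2(k+1)$, giving $c_{\textup{poly}}(2p^k)\le 2(k+1)$.

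Putting the two inequalities together in each case yields $c_{\textup{poly}}(p^k)=k+1$ and $c_{\textup{poly}}(2p^k)=2(k+1)$, which is the claim. (Alternatively, both upper bounds are immediate from $t_{\mathrm{aff}}(p^k)=p^k-(k+1)$ and $t_{\mathrm{aff}}(2p^k)=2p^k-2(k+1)$ recorded after Theorem~\ref{thm:affine_general}, since an affine permutation is a permutation polynomial and $c_{\textup{poly}}(n)=n-\max\{t([\pi]):\pi\in S_{\textup{poly}}(\Z_n)\}\le n-t_{\mathrm{aff}}(n)$; I would phrase the write-up this way to avoid redoing the affine cycle-count computation.)

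The only mild subtlety — and the ``main obstacle'', though it is minor — is making sure the iteration of Lemma~\ref{lem:semi_product_cpoly} is applied with the right bookkeeping: Lemma~\ref{lem:semi_product_cpoly} says $c_{\textup{poly}}(mn)\ge c_{\textup{poly}}(n)+1$, so to build up $p^k$ one writes $p^k=p\cdot p^{k-1}$, then $p^{k-1}=p\cdot p^{k-2}$, etc., peeling off one factor of $p$ at a time and adding $1$ each step, down to the base case $c_{\textup{poly}}(p)$; one must verify the base case separately rather than trying to start the recursion at $k=0$. Everything else is a routine application of the already-established lemmas.
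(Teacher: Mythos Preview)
Your proposal is correct and follows essentially the same approach as the paper: the lower bounds come from iterating Lemma~\ref{lem:semi_product_cpoly} (for $p^k$) and then combining with Lemma~\ref{lem:product_cpoly} and $c_{\textup{poly}}(2)=2$ (for $2p^k$), while the upper bounds come from the affine examples witnessing $t_{\mathrm{aff}}(p^k)=p^k-(k+1)$ and $t_{\mathrm{aff}}(2p^k)=2p^k-2(k+1)$. Your parenthetical ``alternatively'' phrasing via $t_{\mathrm{aff}}$ is in fact exactly how the paper states the upper-bound half, so you can safely drop the explicit recomputation of the affine cycle counts.
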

\begin{proof}
    The first lower bound follows from Lemma~\ref{lem:semi_product_cpoly} and the second uses the first lower bound and Lemma~\ref{lem:product_cpoly}. The upper bounds follow from the lower bounds on $t(p^k)$ and $t(2p^k)$ proved by \cite{adin2025circularsorting} which used affine functions (that is, permutation polynomials of degree 1).
\end{proof}
Finally, we remark that the lemmas in this section can be used to show that a certain avenue towards improving the lower bounds on $t(n)$ computationally is futile. As a simple corollary of the two lemmas, we find that $c_{\textup{poly}}(24)\geq c_{\text{poly}}(3)c_{\text{poly}}(8)\geq 2\cdot (2+1+1)=8$. This means that we cannot find a better lower bound example for $t(24)$ using permutation polynomials compared to our example in Section~\ref{sec:computational}, for example.

\section{Computational results}
\label{sec:computational}
In this section, we describe the results that we obtained via computer experimentation as well as our approach in obtaining them. In the auxiliary data to the arXiv submission of this paper, we included an example of a permutation achieving the lower bound from Table~\ref{tab:summary_tn} for every composite $n \leq 44$. This also contains the code for our computational results such as $t(25)=22$, Proposition~\ref{prop:quadratic} and Proposition~\ref{prop:avoid_small_cycles}.

\subsection{Computational approach for \texorpdfstring{$t(25)\leq 22$}{t25leq22}}
\label{subsec:t25}
Note that $n=25$ is the smallest composite number for which $t(n)\leq n-3$ is not guaranteed by Theorem \ref{thm:div_by_2_or_3}. We show by computer search that $t(25)\leq 22$. 

Naively iterating over all permutations would be too computationally expensive.
The connection to orthomorphisms and strong complete mappings from Lemma~\ref{lem:orthomorphism_doublefixed} and Lemma~\ref{lem:completemapping_transposition} aids us greatly here in our computer search: we may restrict our search for a permutation $\pi\in S_n$ with $t([\pi])=n-2$ to permutations that are also a strong complete mapping, of which there are much fewer~\cite{oeisA071607}. For example, for $n=25$ there are $24!>6\times 10^{23}$ permutations of $\Z_{25}$ that fix the element $0$, but only $78309000$ of these are strong complete mappings (also known as the number of ways of arranging $25$ non-attacking queens on a $25\times 25$ toroidal chessboard~\cite{oeisA007705}). 

We use a backtracking approach for generating strong complete mappings. We generate a function $f:\{0,1,\dots,24\}\to \{0,1,\dots,24\}$ by setting $f(0)=0$ and then for $x=1,2,\dots,24$ we choose a value for $f(x)$ to ensure we keep the `strong complete mapping' property: the values $\{f(y) : y\in \{0,1,\dots,x\}\}$ are all distinct, the values $\{f(y)-y : y\in \{0,1,\dots,x\}\}$ are all distinct and the values $\{f(y)+y : y\in \{0,1,\dots,x\}\}$ are all distinct.  Once we have generated a strong complete mapping, we can check the cyclic structure of the map seen as a permutation and those of its cyclic shifts. If we find at least $3$ cycles in one of the cyclic shifts or we cannot assign the next value, then we backtrack and try the next available value for $f(x)$. It is possible to already `prune' parts of the search tree by checking whether the function defined so far (or any of its cyclic shifts) already has any cycle of length between $2$ and $23$: if this is the case, it can never be extended to a permutation with cycle type $(1,24)$ in all its cyclic shifts.

This general approach can be sped up further as follows.
We may assume that $\pi(0)=0$.
For a permutation $\pi \in S_n$, we define $(\pi(x)-\pi(y))/(x-y)$ for $x,y\in \Z_n$ with $(x-y)$ invertible modulo $n$ as a \emph{slope} of $\pi$. 
Since $\pi$ is a bijection, the slope $0$ does not appear.
We put a total order on the elements of $\Z_n$ inherited from the integer representatives in $\{0,1,\dots,n-1\}$. This allows us to consider the smallest slope $c$ of $\pi$. Suppose that this is achieved by $(x_0,y_0)$, that is, $c=(\pi(x_0)-\pi(y_0))/(x_0-y_0)$.
We define $\pi':\Z_n\to\Z_n$ as 
\[
\pi': x\mapsto a^{-1}(\pi(ax+b)-\pi(b))
\]
where $a=x_0-y_0$ and $b=y_0$ are chosen so that
\begin{align*}
  \pi'(1)&=  a^{-1}(\pi(a+b)-\pi(b))=\frac{\pi(a+b)-\pi(b)}{(a+b-b)}=(\pi(x_0)-\pi(y_0))/(x_0-y_0)=c,\\
  \pi'(0)&=  a^{-1}(\pi(b)-\pi(b))=0.
\end{align*}
Since $x_0,y_0$ are distinct, $a\neq 0$.
For $a\in (\Z_n)^\times$ and $b\in \Z_n$ the map
\[
\pi \mapsto \pi': x\mapsto a^{-1}\pi(ax+b)-\pi(b)
\]
preserves the set of slopes, the strong-complete-mapping property, and $C(\pi)$ (the set of cycle types of the cyclic shifts of $\pi$). Therefore, we only have to show that $t([\pi])\leq 22$ for all strong complete mappings $\pi:\Z_{25}\to \Z_{25}$ for which the smallest slope is achieved for $(x,y)=(0,1)$. 
This means that when we try the next value for $\pi(x)$, we can compute the slopes it will create and only have to try the options for which each new slope is at least $\pi(1)-\pi(0)=\pi(1)$.

\subsection{Avoiding small cycles}
\label{sec:avoiding_small_cycles}
Let $n\geq 2$ be an integer.
For integers $a=a(n)\geq 2$, the following claims are equivalent.
\begin{itemize}
    \item For each $\pi\in S_n$, there is a cyclic shift of $\pi$ which has two fixed points or a cycle of length at most $a$ that is not a fixed point.
    \item For each orthomorphism $\pi\in S_n$, there is a cyclic shift of $\pi$ which has a cycle of length at most $a$ that is not a fixed point.
    \item For each $\pi\in S_n$, there is a subset $A\subseteq \Z_n$ with $2\leq |A|\leq a$ such that $\pi(A)=A+i$ for some $i\in \Z_n$.
\end{itemize}
The equivalent claims are vacuously true for $a(n)=n-1$ and are true for $a(n)=n-2$ if and only if there is no permutation $\pi\in S_n$ with cycle type $(1,n-1)$ in all cyclic shifts.

Let $a^*(n)$ denote the smallest integer $a\geq 1$ for which the first claim holds.
Conjecture~\ref{conj:non-prime} is equivalent to $a^*(n)\leq n-2$ for all composite $n$. 
In Section~\ref{sec:non_prime}, we show that $a^*(n)=1$ when $n\geq 3$ is even and $a^*(n)\leq 2$ when $n\geq 4$ is divisible by $3$.
This gives hope that perhaps a stronger upper bound can be proved on $a^*(n)$ for composite $n$, for example $a^*(n)\leq p-1$ when $p$ is a prime divisor of $n$. Unfortunately, such hopes are diminished due to the following result.

\begin{prop}
\label{prop:avoid_small_cycles}
  For all $\pi \in S_{25}$, there is a subset $A$ with $2\leq |A|\leq 7$ for which $\pi(A)=A+i$ for some $i\in \Z_{25}$. Moreover, there are orthomorphisms  $\pi\in S_{25}$ that avoid cycles of lengths $2,3,4,5,6$ in all cyclic shifts.  That is, $a^*(25)=7$.
\end{prop}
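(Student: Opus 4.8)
By the equivalence of the three claims stated before the proposition, it suffices to work with the middle formulation: for a parameter $a$, the statement ``$a^*(25)\le a$'' is equivalent to saying that every orthomorphism $\pi\in S(\Z_{25})$ has a cyclic shift containing a non-fixed-point cycle of length at most $a$. (If $\pi$ is not an orthomorphism, Lemma~\ref{lem:orthomorphism_doublefixed} already gives a cyclic shift with two fixed points, hence a set $A$ of size $2$, so only orthomorphisms are relevant.) Moreover, since every $x\in\Z_{25}$ is a fixed point of exactly one cyclic shift of $\pi$ (as in the proof of Lemma~\ref{lem:n-2}) and an orthomorphism has at most one fixed point per shift, every cyclic shift of an orthomorphism of $\Z_{25}$ has \emph{exactly} one fixed point. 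Consequently, a witness to ``$a=6$ fails'' is an orthomorphism all of whose $25$ cyclic shifts have cycle type $(1)+\mu$ with $\mu$ a partition of $24$ into parts of size $\ge 7$, and a witness to ``$a=7$ fails'' would be one where every shift has cycle type $(1)+\mu$ with $\mu$ a partition of $24$ into parts of size $\ge 8$. Note that, since the defining property of such witnesses depends only on the multiset of cycle types of the cyclic shifts of $\pi$, it is invariant under the slope symmetry $\pi\mapsto\bigl(x\mapsto a^{-1}\pi(ax+b)-\pi(b)\bigr)$ from Section~\ref{subsec:t25}.

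\textbf{Upper bound $a^*(25)\le 7$.} I would verify by an exhaustive backtracking search that there is no orthomorphism $\pi\in S(\Z_{25})$ whose cyclic shifts all avoid non-fixed-point cycles of lengths $2,\dots,7$. Normalise to $\pi(0)=0$, and, using the slope symmetry above, assume in addition that the smallest slope of $\pi$ is attained at $(0,1)$, i.e.\ every slope is at least $\pi(1)$. Build $\pi$ value by value, maintaining injectivity of $\pi$ and of $\pi-\Id$ (the orthomorphism constraint), and prune a branch as soon as the partially defined map or one of its cyclic shifts contains a \emph{completed} cycle of length between $2$ and $7$, since such a partial assignment can never extend to a permutation of the forbidden type. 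Because orthomorphisms generically do produce short cycles in their shifts, this pruning collapses the (a priori huge) tree of orthomorphisms of $\Z_{25}$, and the search terminates reporting no solution.

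\textbf{Lower bound $a^*(25)\ge 7$.} Running the same search with the target cycle lengths $\{2,\dots,6\}$ instead of $\{2,\dots,7\}$ does produce solutions; I would simply exhibit one such orthomorphism $\pi\in S(\Z_{25})$ explicitly (the one recorded in the auxiliary data) and check directly, by computing the cycle decomposition of $\pi\circ c^k$ for each $k\in\Z_{25}$, that every cyclic shift has exactly one fixed point and all other cycles of length $\ge 7$. This shows that the value $a=6$ fails in all three equivalent claims, so $a^*(25)\ge 7$, and combined with the upper bound we get $a^*(25)=7$; in particular, since $\pi(A)=A+i$ corresponds to an $i$-shifted invariant set of a cyclic shift of $\pi$, the first assertion of the proposition follows.

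\textbf{Main obstacle.} The difficulty is entirely computational: the number of orthomorphisms of $\Z_{25}$ is far too large for naive enumeration, so both the feasibility and the correctness of the argument rest on (i) the reduction to orthomorphisms via Lemma~\ref{lem:orthomorphism_doublefixed}, (ii) the slope-symmetry normalisation, which requires checking that this map preserves the orthomorphism property and the multiset of cycle types of cyclic shifts, and (iii) aggressive pruning on completed short cycles across all cyclic shifts. Once these reductions are in place the searches are routine, but ensuring their soundness — especially that no valid orthomorphism is inadvertently pruned — is the part that needs care.
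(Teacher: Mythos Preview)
Your proposal is correct and follows essentially the same approach as the paper: the paper states that the proposition is proved ``using a computer search similar to the one described in the previous subsection'' (i.e.\ the backtracking/slope-symmetry method of Section~\ref{subsec:t25}) and then exhibits explicit orthomorphisms witnessing the lower bound, exactly as you outline. Your write-up makes the reduction to orthomorphisms and the invariance of the relevant properties under the slope symmetry more explicit than the paper does, but the method is the same.
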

We prove this proposition using a computer search similar to the one described in the previous subsection.
If we view $\pi$ and $\pi'$ as the same if there is $a\in\Z_n^\times$ and $b,b'\in \Z_n$ for which $\pi':x\mapsto a^{-1}\pi(a(x+b))+b'$, then there are only two `different' examples which avoid cycles of length at most 6. The lexicographically smallest representatives are
\[
(0, 2, 4, 23, 14, 18, 10, 22, 16, 12, 1, 8, 5, 9, 20, 24, 21, 3, 17, 13, 7, 19, 11, 15, 6).
\]
and
\[
(0, 2, 12, 1, 11, 17, 3, 16, 4, 15, 21, 6, 20, 5, 7, 18, 10, 19, 23, 8, 24, 9, 13, 22, 14).
\]
Both have the following cycle types (in some order) for the cyclic shifts: $20$ times the cycle type $(1,24)$, $4$ times $(1,7,7,10)$ and $1$ time $(1,8,8,8)$.

\subsection{Values of \texorpdfstring{$t(n)$}{t(n)} for \texorpdfstring{$n$}{n} non-prime}
\label{subsec:table_of_values}

In this subsection we summarize the result of our theorems and computer searches for $n \in [44]$. In particular, we give the exact value of $t(n)$ for composite $n \leq 21$ as well as an example of a permutation that achieves this value (see Table \ref{tab:summary_tn}). 

For all composite $n\leq 44$, except $n = 25$ and $n= 35$, the number $n$ is divisible by $2$ or by $3$ and therefore $t(n)\leq n-3$ by Theorem~\ref{thm:div_by_2_or_3}. In the special case $n=25$, a more advanced computer search allowed us to show that $t(n)\leq n-3$ as well in Section~\ref{subsec:t25}. Our computer searches are constructive, thus for all lower bounds given on $t(n)$, we also have an associated permutation achieving it. 

We remark that, already for $n=8$, Lemma \ref{lem:semi_product_cpoly} implies that any permutation showcasing $t(8) \geq 5$, such as the one presented in Table \ref{tab:summary_tn}, is not a permutation polynomial over $\Z_8$. More generally, most of the lower bound examples are not permutation polynomials. 

\begin{table}[!ht]
\centering
\begin{tabular}{c|c}
\toprule
$n$ & $t(n)$ bounds \\
\midrule
4  & 1 \\
6  & 3 \\
8  & 5 \\
9  & 6 \\
10 & 7 \\
12 & 9 \\
14 & 11 \\
15 & 12 \\
16 & 13 \\
18 & 15 \\
20 & 17 \\
21 & 18 \\
22 & [18,19] \\
24 & [20,21] \\
\bottomrule
\end{tabular}
\hspace{3cm}
\begin{tabular}{c |c}
\toprule
$n$ & $t(n)$ bounds \\
\midrule
25 & 22 \\
26 & [22,23] \\
27 & 24 \\
28 & [24,25] \\
30 & [26,27] \\
32 & [28,29] \\
33 & 30 \\
34 & [30,31] \\
35 & [31,33] \\
36 & [31,33] \\
38 & [34,35] \\
39 & 36   \\
40 & [35,37]  \\
42 & [37,39]  \\
\bottomrule
\end{tabular}

\vspace{1cm}

{\renewcommand{\arraystretch}{1.2}
\begin{tabular}{c|l}
\toprule
 $n$ & Lower bound permutation $\pi = (\pi(0), \pi(1),\ldots,\pi(n-1))$ \\
\midrule
4   & (0,1,3,2) \\
6   & (0,1,3,5,2,4) \\
8   & (0,1,3,5,2,7,4,6) \\
9   & (0,1,3,4,6,2,8,5,7) \\
10  & (0,1,4,9,3,8,7,2,6,5) \\
12  & (0,4,7,9,6,1,2,11,8,10,5,3)\\
14  & (0,5,8,13,9,7,11,4,1,12,3,10,6,2) \\
15  & (0,11,9,5,7,14,3,6,13,12,10,2,8,4,1) \\
16  & (0,2,4,1,12,13,9,14,10,5,3,6,11,7,8,15) \\
18  & (0,5,1,17,7,11,13,10,9,4,2,16,3,12,15,8,6,14) \\
20  & (0,15,7,10,2,9,17,3,14,6,12,19,5,8,4,13,16,11,1,18) \\
21  & (0,2,4,13,19,16,17,6,8,11,9,7,5,18,12,1,20,3,14,10,15) \\
\end{tabular}
}

\caption{The two top tables summarise the best lower and upper bounds we obtained on $t(n)$ for composite $n\leq 42$. We note that $t(n)=n-2$ for $n$ prime. In the bottom table, an example of a permutation achieving the lower bound is presented for composite $n \leq 21$.}
\label{tab:summary_tn}
\end{table}

\newpage

\section{Conclusion}\label{sec:conclusion}
Via the connection to strong complete mappings, we proved Conjecture~\ref{conj:non-prime} for $n$ divisible by $2$ or~$3$ and disproved Conjecture~\ref{conj:affine_prime}. 
Using quadratic orthomorphisms we obtained counterexamples to Conjecture~\ref{conj:affine_prime} for many primes, but we did not manage to extend this to an infinite family.
\begin{problem}
    Is there an infinite family of primes $p$ for which there exist $a\neq b\in \Z_p$ such that \[
    \pi_{a,b,p}(x):=[a,b](x)=((a-b)/2)x^{(p+1)/2}+((a+b)/2)x
    \]
    satisfies $t([\pi_{a,b,p}])=p-2$?
\end{problem}
Perhaps it is even possible to characterise all primes $p$ and $a, b \in \Z_p$ such that the map $[a, b](x) + c$ is a permutation of cycle type $(1, p - 1)$ for all $c \in \Z_p$. 

\smallskip

New ideas seem to be needed to prove Conjecture~\ref{conj:non-prime} for other values of $n$. We suggest the following special case of this.
\begin{problem}
    Is $t(p^2)=p^2-3$ for all primes $p$? 
\end{problem}
The lower bound in the problem above can be obtained using affine permutations.

We still have only limited insights into how to prove upper bounds on $t(n)$ and in particular leave the following problem open. 
\begin{problem}
    Is there a value of $n$ for which $t(n)\leq n-4$? 
\end{problem}
Despite being unable to exclude the possibility that $t(n)\geq n-3$ for all $n$, the best lower bound which holds for all $n$ remains the lower bound $t(n)\geq n - e(\ln n + 1)$  from~\cite{adin2025circularsorting} obtained by probabilistic arguments. We leave open the problem of finding explicit constructions of $\pi$ with $n-t([\pi])=O(\log n)$ for all values of $n$.
In particular, our lower bounds from Theorem~\ref{thm:lower_bounds} behave poorly for $n=pqr$ with $p,q,r= \Theta(n^{1/3})$, since it only tells us that $n-t(n)=O(n^{1/3})$. 
Similarly, affine functions perform poorly when $n$ has many prime factors. For example, there are values of $n$ with  $k=\Theta(\log n/\log\log n)$ different prime factors and for those value of $n$, Lemma~\ref{lem:product_cpoly} implies that $n-t([\pi])\geq 2^k=n^{\Omega(1/\log\log n)}$ for all affine $\pi$ (more generally, all permutation polynomials).

\medskip

We would also like to highlight the following question~\cite[Question 3.9]{adin2025circularsorting}: is it true that $t(n)\geq t(n-1)$ for all values of $n$? In particular, it would be interesting to compute the values of $t(24)$ and $t(30)$ since these are the first places at which monotonicity could still fail:
\begin{itemize}
    \item $20\leq t(24)\leq 21$ with $t(23)=21$,
    \item $26\leq t(30)\leq 27$ with $t(29)=27$.
\end{itemize}

From a sorting perspective, it seems natural that adding an element to the domain cannot make it easier to sort. However, this does not hold at the level of individual permutations. For example, 
\begin{align*}
  &\pi_{6,19}=(0, 6, 12, 18, 5, 11, 17, 4, 10, 16, 3, 9, 15, 2, 8, 14, 1, 7, 13)\\
  &\sigma_{6,19}=(0, 6, 12, 18, 5, 11, 17, 4, 10, 16, 3, 9, 15, 2, 8, 14, 1, 7, 13,19)
\end{align*}
satisfy $t([\pi_{6,19}])=19-3=16$ and $t([\sigma_{6,19}])=20-5=15$
(here  $\pi_{6,19}(x)=6x\bmod 19$). 

We remark that the following different notion of monotonicity might also be interesting.
\begin{conj}
\label{conj:our_multiplicity}
Let $c(n)=n-t(n)$. Is it true that $c(nm)\geq c(n)$ for all integers $n,m\geq 2$?    
\end{conj}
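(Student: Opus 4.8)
The statement to prove is Conjecture~\ref{conj:our_multiplicity}: writing $c(n)=n-t(n)$, we want $c(nm)\geq c(n)$ for all integers $n,m\geq 2$. Equivalently, in the notation of the paper, we want to show that if every circular permutation on $n$ points can be sorted leaving at least $c(n)$ cycles intact (i.e. every $\pi\in S(\Z_n)$ has a cyclic shift with at least $c(n)$ cycles), then the same lower bound $c(n)$ on the cycle count holds for circular permutations on $nm$ points. So the goal is: for every $\sigma\in S(\Z_{nm})$ there is a $k$ with $\cyc(\sigma c^k)\geq c(n)$.

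\textbf{Approach.} The natural idea is to exploit the product identification $\Z_{nm}\cong \Z_m\times \Z_n$ used in Section~\ref{sec:semi-direct} (via $r+mt\mapsto (r,t)$), under which the cyclic shift $z\mapsto z+1$ becomes the map $c\in G$ of~(\ref{eq:c_semidirect}). Given an arbitrary $\sigma\in S(\Z_{nm})$, transport it to $S(\Z_m\times\Z_n)$; but note that an arbitrary $\sigma$ need \emph{not} lie in the wreath product subgroup $G$, so Lemmas~\ref{lem:sigmarotate}--\ref{lem:lift} do not apply directly. The plan is instead to project onto the \emph{second} coordinate: fix the first coordinate's behaviour and examine the ``fibre'' structure. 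More precisely, consider the equivalence relation on $\Z_{nm}$ of ``same residue mod $n$'', which has $n$ classes each of size $m$; I would try to show that some cyclic shift $\sigma c^k$ respects a coarsening of this structure well enough that its cycle count is controlled by a permutation in $S(\Z_n)$. Concretely: first I would reduce to the case where $\sigma$ fixes a point (replace $\sigma$ by a cyclic shift), then look at the induced action of $\sigma$ on the quotient set $\Z_{nm}/m\Z_{nm}\cong\Z_n$ — but $\sigma$ need not descend to a well-defined map there. So this direct approach needs a fix.

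\textbf{A cleaner route} is probably the following. Observe that $c(n)$ is also characterised (via Lemma~\ref{lem:n-2} and its extensions, and the $a^*$ formulation of Section~\ref{sec:avoiding_small_cycles}) as: the largest $b$ such that \emph{every} $\pi\in S(\Z_n)$ admits, after some cyclic shift, a decomposition of $[n]$ into at least $b$ nonempty $\pi$-invariant blocks. Dually, $n-c(n)=t(n)$ is the max number of transpositions needed. I would try to prove the contrapositive-style statement directly on the grid. Take $\sigma\in S(\Z_{nm})$; I want a cyclic shift with $\geq c(n)$ cycles. Consider the $n$ ``rows'' $R_j=\{z\in\Z_{nm}: z\equiv j\pmod n\}$ (each of size $m$). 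Apply the hypothesis for $n$ not to $\sigma$ but to the permutation $\bar\pi\in S(\Z_n)$ obtained by a suitable averaging/section argument: pick a section $s:\Z_n\to\Z_{nm}$, and let $\bar\pi(j)$ be the row index of $\sigma(s(j))$; choosing $s$ and a shift of $\sigma$ compatibly, $\bar\pi$ is a well-defined permutation of $\Z_n$ (this requires checking that the ``row of the image'' can be made to depend only on the row of the input after an appropriate cyclic shift — here is exactly where the structure of $c$ from~(\ref{eq:c_semidirect}) helps, since $c$ shifts the first coordinate and only affects the second on wraparound). Then by hypothesis some cyclic shift of $\bar\pi$ has $\geq c(n)$ cycles, say on block sets $B_1,\dots,B_{c(n)}\subseteq\Z_n$; pulling these back to unions of rows gives $\geq c(n)$ many $\sigma c^k$-invariant subsets of $\Z_{nm}$, hence $\geq c(n)$ cycles. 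The main technical content is verifying that the ``first coordinate'' of $\sigma c^k$ can genuinely be made into an element of $S(\Z_n)$ that one can feed to the hypothesis.

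\textbf{Expected main obstacle.} The crux — and I suspect the reason this is stated as an open conjecture rather than a lemma — is precisely that an arbitrary $\sigma\in S(\Z_{nm})$ does \emph{not} have the wreath-product form~(\ref{eq:semidirect}), so there is no guarantee that $\sigma$ (or any cyclic shift of it) induces a well-defined permutation on the set of rows $\{R_0,\dots,R_{n-1}\}$. Two images $\sigma(z),\sigma(z')$ with $z\equiv z'\pmod n$ can land in different rows. Thus the reduction to $S(\Z_n)$ may simply fail, and one would need either (a) a more flexible invariant (e.g. a fractional/weighted notion of invariant partition that still lower-bounds cycle count and behaves submultiplicatively), or (b) to use the hypothesis for $n$ in a stronger averaged form over all cyclic shifts and all choices of section simultaneously, perhaps via a double-counting argument bounding $\sum_k \cyc(\sigma c^k)$ from below by $nm\cdot$(something)$/$(something). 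I would expect that settling this requires genuinely new input beyond the wreath-product toolkit of Section~\ref{sec:semi-direct}, which is consistent with the paper leaving it as Conjecture~\ref{conj:our_multiplicity}; a full proof is likely out of reach with only the results quoted above, and the honest ``plan'' is to attempt route (b) and flag the row-mixing issue as the obstruction.
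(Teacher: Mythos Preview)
You have correctly identified that Conjecture~\ref{conj:our_multiplicity} is stated in the paper as an \emph{open problem}, not as a theorem with a proof. The paper gives no argument for it; the only remarks following the conjecture are (i) that the analogous inequality $c_{\textup{poly}}(nm)\geq c_{\textup{poly}}(n)+1$ \emph{does} hold for permutation polynomials (Lemma~\ref{lem:semi_product_cpoly}), (ii) that the ``$+1$'' strengthening fails in general since $t(8)=5$, and (iii) that the conjecture, if true, would reduce Conjecture~\ref{conj:non-prime} to the case $n=pq$ with $p,q$ odd primes.

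Your diagnosis of the obstruction is accurate and matches the spirit of the paper: the wreath-product machinery of Section~\ref{sec:semi-direct} (Lemmas~\ref{lem:sigmarotate}--\ref{lem:lift}, Lemma~\ref{lem:semi_direct_limitation_3cycles}) only controls permutations $\sigma$ that already lie in $G\cong S(\Z_n)\wr S(\Z_m)$, i.e.\ those whose first coordinate map is well-defined. An arbitrary $\sigma\in S(\Z_{nm})$ need not descend to any permutation of the row set, and no cyclic shift need repair this. Your ``section plus shift'' idea cannot work as stated: for a generic $\sigma$ there is no choice of $k$ making the row of $\sigma c^k(z)$ depend only on the row of $z$, so the map $\bar\pi$ is simply not defined. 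The paper's proof of the polynomial analogue (Lemma~\ref{lem:semi_product_cpoly}) succeeds precisely because permutation polynomials over $\Z_{mn}$ \emph{do} automatically have the wreath-product form, which is exactly the structural input that is missing in general.

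In short: there is nothing to compare against, your assessment that a proof is out of reach with the tools in the paper is correct, and the obstacle you flag is the right one.
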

We proved that when restricting to permutation polynomials, we even find the inequality $c_{\text{poly}}(nm)\geq c_{\text{poly}}(n)+1$, but this is not true in general since for example $t(8)=5$ (by exhaustive search).  If Conjecture~\ref{conj:our_multiplicity} is true, then Conjecture~\ref{conj:non-prime} would be reduced to the case $n=pq$ with $p,q$ odd primes. 

\bibliographystyle{abbrv}
\bibliography{ref.bib}

\appendix

\section{Existence of strong complete mappings}
The results in this section are known, but we provide them for completeness since we did not find a reference for the second lemma which was easily accessible and in English.
\begin{lem}
\label{lem:conj_for_evenn}
There exists an orthomorphism $f:\Z_n\to\Z_n$ if and only if $n$ is odd.
\end{lem}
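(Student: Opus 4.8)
The plan is to prove the two implications separately; both are elementary and short. For the easy ``if'' direction, I would simply exhibit an orthomorphism when $n$ is odd: take $f(x):=2x$. Since $\gcd(2,n)=1$, the map $f$ is a bijection of $\Z_n$, and $(f-\Id)(x)=2x-x=x$ is the identity map, which is trivially a bijection; hence $f$ is an orthomorphism. (For $n=1$ the identity works vacuously.)

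For the ``only if'' direction, I would argue by contradiction using a summation identity. Suppose $f$ is an orthomorphism, so that both $f$ and $g:=f-\Id$ are bijections of $\Z_n$. Since any bijection of $\Z_n$ merely permutes the elements, summing it over $\Z_n$ returns $\sum_{x\in\Z_n}x$. Working in $\Z_n$ this gives
\[
\sum_{x\in\Z_n} x \;=\; \sum_{x\in\Z_n} g(x) \;=\; \sum_{x\in\Z_n}\big(f(x)-x\big) \;=\; \sum_{x\in\Z_n} f(x) \;-\; \sum_{x\in\Z_n} x \;=\; 0,
\]
where the last equality uses that $f$ too is a bijection. Hence $n \mid \sum_{x=0}^{n-1} x = \tfrac{n(n-1)}{2}$.

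It then remains to deduce that $n$ must be odd. If $n=2m$ were even, then $\tfrac{n(n-1)}{2}=m(n-1)\equiv -m\equiv m\pmod n$, and since $0<m<n$ this is nonzero modulo $n$, contradicting the divisibility just obtained. Therefore $n$ is odd, completing the proof. There is no genuine obstacle in this argument; the only point needing a little care is the parity computation of $\tfrac{n(n-1)}{2}$ modulo $n$, and remembering to treat the trivial case $n=1$.
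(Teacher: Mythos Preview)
Your proof is correct and follows essentially the same approach as the paper: a summation argument to rule out even $n$, and an affine map to witness an orthomorphism for odd $n$. Your explicit choice $f(x)=2x$ (so that $f-\Id=\Id$) is a bit more direct than the paper's ``there exists $a$ with $\gcd(a,n)=\gcd(a+1,n)=1$'' formulation, but the idea is identical.
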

\begin{proof}
    Suppose that $n$ is even and that $f:\Z_n\to\Z_n$ is a bijection. Then $f(0)+f(1)+\dots+f(n-1)=0+1+\dots+n-1=n(n-1)/2$. For $n$ even, $n(n-1)/2$ will be non-zero modulo $n$.
    For $f(x)-x$ however, the sum will be $0$. This means $g(x)=f(x)+x$ cannot be a bijection when $f$ is a bijection and $n$ is even. 

    When $n=1$, there is only one bijection $f$ and $g(x)=f(x)+x$ is the same bijection.
    For $n$ odd and $n\geq 3$, there exists $a\in \Z_n$ such that $\gcd(a,n)=1$ and $\gcd(a+1,n)=1$. Now $f(x)=ax\mod n$ is a bijection and also $g(x)=f(x)+x=ax+x=(a+1)x$ is a bijection.
\end{proof}
\begin{lem}
    There is a bijection $f:\Z_n\to\Z_n$ such that $g(x)=f(x)+x$ and $h(x)=f(x)-x$ are both also bijections if and only if $n$ is odd and not divisible by 3.
\end{lem}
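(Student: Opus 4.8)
The statement is exactly P\'olya's Proposition~\ref{Polya} rephrased (a strong complete mapping is precisely such an $f$, and ``$n$ odd and not divisible by $3$'' is ``$\gcd(n,6)=1$''), so the plan is to carry out the classical Hurwitz argument in full. For the ``if'' direction, when $\gcd(n,6)=1$ I would simply exhibit the affine map $f(x)=2x$: it is a bijection of $\Z_n$ because $\gcd(2,n)=1$, the map $h(x)=f(x)-x=x$ is the identity, and $g(x)=f(x)+x=3x$ is a bijection because $\gcd(3,n)=1$, so $f$ has the required property.

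For the ``only if'' direction I would split into the cases $n$ even and $3\mid n$, which together cover every $n$ that is not odd and coprime to $3$. If $n$ is even, then any $f$ as in the statement is in particular an orthomorphism (its difference with the identity is a bijection), so Lemma~\ref{lem:conj_for_evenn} already rules it out; equivalently, comparing first-power sums, $\sum_{x\in\Z_n}g(x)=\sum_x f(x)+\sum_x x=2\sum_x x$ while a bijection forces $\sum_x g(x)=\sum_x x$, whence $\sum_{x=0}^{n-1}x=\tfrac{n(n-1)}{2}\equiv 0\pmod n$, which fails for $n$ even.

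For the case $3\mid n$ the plan is the analogous computation with \emph{squares}. Since $f$, $f+\Id$ and $f-\Id$ are all bijections of $\Z_n$, each of $\sum_x f(x)^2$, $\sum_x (f(x)+x)^2$ and $\sum_x (f(x)-x)^2$ is congruent mod $n$ to $S:=\sum_{x=0}^{n-1}x^2=\tfrac{(n-1)n(2n-1)}{6}$. Adding the ``$+$'' and ``$-$'' identities and expanding the squares gives $2\sum_x f(x)^2+2\sum_x x^2\equiv 2S$, i.e.\ $4S\equiv 2S$, hence $2S\equiv 0\pmod n$. On the other hand $2S=\tfrac{(n-1)n(2n-1)}{3}$, and writing $n=3m$ this equals $(n-1)\,m\,(2n-1)$; since $(n-1)(2n-1)\equiv(-1)(-1)=1\pmod 3$ we get $2S\equiv m=n/3\pmod n$, which is nonzero. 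This contradiction closes the case $3\mid n$, and hence the lemma.

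The computation itself is routine; the only real ``idea'' is choosing the right invariant, and the point worth stressing is that the degree-one power sum detects the obstruction modulo $2$ while the degree-two power sum detects it modulo $3$ — so one genuinely needs both, and neither detects anything further, consistent with the threshold being exactly $\gcd(n,6)=1$.
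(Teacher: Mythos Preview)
Your proposal is correct and follows essentially the same approach as the paper: an affine construction for existence, the first-power sum (equivalently Lemma~\ref{lem:conj_for_evenn}) for the $2\mid n$ obstruction, and the second-power sum identity $\sum(f\pm\Id)^2=2\sum f^2+2\sum x^2\pm\text{cross term}$ for the $3\mid n$ obstruction. The only cosmetic differences are that you take the cleaner witness $f(x)=2x$ (the paper uses $f(x)=kx$ with $n=2k+1$), and you conclude $2S\equiv 0\pmod n$ directly and show $2S\equiv n/3\not\equiv 0$, whereas the paper first reduces to $n$ odd and cancels the factor $2$ to reach $S\equiv 0$ before deriving the contradiction.
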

\begin{proof}
    When $n$ is not divisible by 2 or 3, we can find $a$ with $\gcd(a,n)=\gcd(a-1,n)=\gcd(a+1,n)=1$. For example, write $n=2k+1$ and let $a=k$. Now $x\mapsto ax$ satisfies the requirements.

    For the other direction, the case $2\mid n$ is handled by the previous lemma so suppose that $3\mid n$ and $n$ is odd. Then
    \[
    1^2+2^2+\dots+n^2=n(n+1)(2n+1)/6
    \]
    is not divisible by $n$. Indeed, $3\mid n$ and so 3 does not divide $n+1$ or $2n+1$. So $3$ divides the number on the right-hand side in the displayed equation one fewer time than it divides $n$.

    Next, we show that the existence of $f$ implies that the sum of the squares is 0 mod $n$, i.e. divisible by $n$. Since $f,g,h$ are all bijections,
    \[
    1^2+2^2+\dots+n^2=\sum_{x\in \Z_n} g(x)^2=\sum_{x\in \Z_n} (f(x)+x)^2=\sum_{x\in \Z_n} f(x)^2+\sum_{x\in \Z_n} x^2+2\sum_{x\in \Z_n} xf(x)
    \]
    \[
    1^2+2^2+\dots+n^2=\sum_{x\in \Z_n} h(x)^2=\sum_{x\in \Z_n} (f(x)-x)^2=\sum_{x\in \Z_n} f(x)^2+\sum_{x\in \Z_n} x^2-2\sum_{x\in \Z_n} xf(x).
    \]
    Combined, this shows $2\sum_{x\in \Z_n} xf(x)=0$.
    But since $f$ is a bijection, we also have
    \[
    1^2+2^2+\dots+n^2=\sum_{x\in \Z_n} f(x)^2.
    \]
    So
    \[
    1^2+2^2+\dots+n^2=\sum_{x\in \Z_n} g(x)^2=\sum_{x\in \Z_n} f(x)^2+\sum_{x\in \Z_n} x^2=2(1^2+\dots+n^2).
    \]
    Since $n$ is odd, $2$ is invertible and so this equation implies that the sum of squares is zero, as desired.
\end{proof}

\section{Limitations of the wreath product construction}\label{sec:B}
We show that when $\sigma\in S(\Z_{mn})$ takes the wreath product form defined in Section~\ref{sec:semi-direct}, then some cyclic shift $\sigma c^k$ of $\sigma$ has at least $4$ cycles whenever ($m=2$ and $n\geq 3$) or ($n=2$ and $m\geq 3$).

The case in which $m=2$ is relatively easy to analyse.
\begin{prop}
    Let $n\geq 3$ be an integer.
    Suppose that $\sigma(x,y)=(\pi(x),\pi_x(y))$ for $\pi\in S(\Z_2)$ and $\pi_{0},\pi_{1}\in S(\Z_n)$. Then there is a cyclic shift of $\sigma$ with at least $4$ cycles.
\end{prop}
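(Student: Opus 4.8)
The plan is to case-split on the structure of $\pi \in S(\Z_2)$, since there are only two options. If $\pi = \Id$, then $\sigma(x,y) = (x, \pi_x(y))$, so $\sigma$ already decomposes into the permutation $\pi_0$ acting on $\{0\}\times\Z_n$ and $\pi_1$ acting on $\{1\}\times\Z_n$. Here I would want to use Lemma~\ref{lem:semi_direct_limitation_3cycles} (or argue directly): every permutation of $\Z_n$ has a cyclic shift with at least $2$ cycles, so I can pick $t$ with $y\mapsto \pi_0(y+t)$ having at least $2$ cycles, and by Lemma~\ref{lem:semi_direct_limitation_3cycles} some $\sigma c^k$ has at least $3$ cycles on $\{x_0\}\times\Z_n$ alone — but I need $4$, so I must also control what happens on the other column. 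The point is that for $m=2$, a single cyclic shift $c^k$ acts on \emph{both} columns simultaneously, and when $x_0 = \pi(i)$ the second column gets shifted in a coupled way; I would compute $\sigma c^k$ explicitly via Lemma~\ref{lem:sigmarotate} and count cycles on both $\{0\}\times\Z_n$ and $\{1\}\times\Z_n$ together, using that $w_k$ restricted to the two points of $\Z_2$ takes two consecutive values, so that as $k$ ranges over a suitable arithmetic progression we can independently tune the shifts seen by the two columns.

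For the case $\pi = (0\,1)$, the permutation $\sigma$ swaps the two columns, and by Lemma~\ref{lem:lift} applied to the single $2$-cycle $(0\,1)$ of $\pi$, the set $\Z_2\times\Z_n$ decomposes into cycles of $\sigma$ of lengths $2\ell_1,\dots,2\ell_d$, where $\ell_1,\dots,\ell_d$ are the cycle lengths of $\pi' := \pi_1\pi_0 \in S(\Z_n)$. Applying the same to $\sigma c^k$: by Lemma~\ref{lem:sigmarotate} the map $\sigma c^k$ again has $\pi$-part equal to the swap $(0\,1)$ (since $\pi$ commutes with the shift on $\Z_2$ in the relevant sense, or one checks directly for each $k\bmod 2$), and the relevant composition is $\pi'_k(y) := \pi_{1}(\pi_0(y + w_k(0)) + w_k(1))$ or a variant depending on the parity of $k$ and the value of $r$. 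So the number of cycles of $\sigma c^k$ equals the number of cycles of this twisted composition $\pi'_k$. The task reduces to: show that for some $k$, the permutation $\pi'_k\in S(\Z_n)$ has at least $2$ cycles. Since $w_k(0), w_k(1)$ can be made to range over pairs $(t,t)$ and $(t,t+1)$ as $k$ varies, $\pi'_k$ ranges over all permutations of the form $y \mapsto \pi_1(\pi_0(y+a)+b)$ with $(a,b)$ suitably constrained; I would argue that at least one of these is not an $n$-cycle, e.g. because if every cyclic shift of $\pi_0$ composed with $\pi_1$ were an $n$-cycle this would force $\pi_0$ itself to be a complete mapping (by Lemma~\ref{lem:completemapping_transposition})\footnote{Or one uses Lemma~\ref{lem:orthomorphism_doublefixed}-style reasoning on a fixed point.} and derive a parity contradiction, or simply invoke that some cyclic shift of any permutation of $\Z_n$ has $\geq 2$ cycles and transfer this through the composition.

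The main obstacle I anticipate is the bookkeeping in the $\pi = (0\,1)$ case: unlike the $\pi = \Id$ case where the two columns decouple cleanly, here the cyclic shift interacts with the column swap, so I must carefully track how $w_k(0)$ and $w_k(1)$ enter the twisted composition $\pi'_k$ and verify that the pair $(w_k(0), w_k(1))$ genuinely realizes enough freedom — in particular I need to handle the ``boundary'' values of $k$ (those with $r = m-1 = 1$) separately, since there $w_k$ jumps. Once the right normal form for $\pi'_k$ is pinned down, the combinatorial core — that some cyclic shift of a permutation of $\Z_n$ has at least two cycles — is standard (every element of $\Z_n$ is a fixed point of exactly one cyclic shift, so for $n \geq 2$ not all cyclic shifts can be $n$-cycles), and I would lean on it directly rather than redo it.
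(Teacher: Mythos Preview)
Your proposal has a genuine gap in the $\pi = (0\,1)$ case. You correctly reduce (via Lemma~\ref{lem:lift}) to the statement that the number of cycles of $\sigma c^k$ equals the number of cycles of the twisted composition $\pi'_k \in S(\Z_n)$, but then you write that it suffices to show $\pi'_k$ has at least $2$ cycles. Since the target is $4$ cycles for $\sigma c^k$, you would actually need $\pi'_k$ to have at least $4$ cycles --- and your proposed argument (``some cyclic shift of any permutation of $\Z_n$ has $\geq 2$ cycles'') gives nothing close to that. Showing that some $\pi_1(\pi_0(\cdot + a) + b)$ has $\geq 4$ cycles is not straightforward and is not what your sketch does.

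The paper sidesteps this entirely with a one-line reduction you missed: replacing $\sigma$ by $\sigma c$ toggles $\pi$ between $\Id$ and $(0\,1)$, so one may assume $\pi = \Id$ from the start. Then, looking only at the \emph{even} shifts $\sigma c^{2t}$ (for which $w_{2t}(0) = w_{2t}(1) = t$, so both columns are shifted by the same amount and stay decoupled), one gets $\cyc(\sigma c^{2t}) = \cyc(\pi_0(\cdot + t)) + \cyc(\pi_1(\cdot + t))$. Now a clean dichotomy finishes it: either every cyclic shift of both $\pi_0$ and $\pi_1$ has cycle type $(1,n-1)$, in which case $t=0$ already gives $2+2=4$ cycles; or some $\pi_x(\cdot + t)$ has $\geq 3$ cycles, in which case $\sigma c^{2t}$ has $\geq 3 + 1 = 4$. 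Your $\pi = \Id$ sketch gestures at tuning both columns but never isolates this dichotomy, which is the actual content of the argument.
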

\begin{proof}
By replacing $\sigma$ by $\sigma c$ if necessary, we may assume that $\pi$ is the identity permutation. 

For every $t\in \{0,1,\dots,n-1\}$, we have $\sigma c^{2t}(x,y)=(x,\pi_x(y+t))$. If $\pi_x$ has cycle type $(1,n-1)$ in all cyclic shifts for all $x\in \Z_2$, then $\sigma$ itself has at least $2+2=4$ cycles. If not, let $x\in \Z_2$ and $t\in \{0,1,\dots,n-1\}$ be such that $y\mapsto \pi_x(y+t)$ has at least three cycles. Then $\sigma c^{2t}$ has at least $3$ cycles on $\{x\}\times \Z_n$ so at least $3+1=4$ cycles in total.
\end{proof}

When $n=2$, the analysis becomes more complicated and we first need the following lemma.

\begin{lem}\label{lem:complete}
Let $\pi\in S(\Z_n)$ be a complete mapping. Viewing $\pi$ as a permutation of $\{0,\ldots, n-1\}$, we have
\[
|\{i\in \{0,\ldots, n-1\}: i+\pi(i)\geq n\}|=(n-1)/2.
\]
\end{lem}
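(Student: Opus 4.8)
The plan is to compare, in two ways, the sum of the integers $i+\pi(i)$ as $i$ ranges over $\{0,\dots,n-1\}$. The key preliminary observation is that, since $\pi$ is a complete mapping, the map $g\colon\{0,\dots,n-1\}\to\{0,\dots,n-1\}$ defined by $g(i):=(i+\pi(i))\bmod n$ is a bijection — it is simply $\pi+\Id$ written out on integer representatives in $\{0,\dots,n-1\}$. For each $i$ we have $0\le i+\pi(i)\le 2n-2<2n$, so there is a unique $\varepsilon_i\in\{0,1\}$ with
\[
i+\pi(i)=g(i)+n\,\varepsilon_i,
\]
and by construction $\varepsilon_i=1$ precisely when $i+\pi(i)\ge n$. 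Hence the cardinality to be computed equals $\sum_{i=0}^{n-1}\varepsilon_i$.

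Next I would sum the displayed identity over all $i$. On the left-hand side, both $\Id$ and $\pi$ are permutations of $\{0,\dots,n-1\}$, so the total is $2\cdot\frac{n(n-1)}{2}=n(n-1)$. On the right-hand side, $g$ is a permutation of $\{0,\dots,n-1\}$, so the total is $\frac{n(n-1)}{2}+n\sum_{i=0}^{n-1}\varepsilon_i$. Equating the two expressions yields $n\sum_i\varepsilon_i=\frac{n(n-1)}{2}$, and therefore $\sum_i\varepsilon_i=\frac{n-1}{2}$, which is exactly the claim. (As a byproduct this forces $n$ to be odd, consistent with the fact that a complete mapping of $\Z_n$ exists only for odd $n$.)

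I do not expect a real obstacle here: the argument is a short double-counting computation. The only points requiring a word of care are that $g$ is genuinely a bijection, which is immediate from the definition of a complete mapping, and that $i+\pi(i)<2n$, so that each ``carry'' $\varepsilon_i$ lies in $\{0,1\}$ rather than being possibly larger; both are elementary.
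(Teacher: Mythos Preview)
Your proof is correct and is essentially identical to the paper's own argument: your $\varepsilon_i$ is the paper's indicator $f(i)$, your $g(i)$ is exactly the paper's permutation $i\mapsto i+\pi(i)-nf(i)$, and the summation identity you derive is the same line-for-line computation. There is nothing to add.
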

\begin{proof}
Define $f:\{0,\ldots, n-1\}\to \{0,1\}$ by setting $f(i)=1$ if $i+\pi(i)\geq n$, and $f(i)=0$ otherwise. Since $\pi$ is a complete mapping, $i\mapsto i+\pi(i)-nf(i)$ is a permutation of $\{0,\ldots, n-1\}$. Using that $0+1+\cdots+(n-1)=n(n-1)/2$, it follows that 
\[
n(n-1)=\sum_{i=0}^{n-1} (i+\pi(i))=\sum_{i=0}^{n-1}(i+\pi(i)-nf(i))+n\sum_{i=0}^{n-1} f(i)=n(n-1)/2+ n\sum_{i=0}^{n-1} f(i).
\]
This shows that $\sum_{i=0}^{n-1}f(i)=(n-1)/2$.
\end{proof}

\begin{prop}
    Let $n=2$ and $m\geq 3$. Then $\sigma(x,y)=(\pi(x),\pi_x(y))$ satisfies $t([\sigma])\leq 2m-4$ for every choice $\pi\in S(\Z_m)$ and $\{\pi_x\}_x\subseteq S(\Z_2)$.
\end{prop}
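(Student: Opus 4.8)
The plan is to reduce everything to the case where $\pi \in S(\Z_m)$ is the identity, then exploit the structure of $\Z_2$ together with Lemma~\ref{lem:complete}. First I would replace $\sigma$ by $\sigma c$ if necessary so as to arrange that $\pi$ is the identity permutation on $\Z_m$; this does not change $t([\sigma])$, and it makes every cyclic shift $\sigma c^k$ easy to describe via Lemma~\ref{lem:sigmarotate}: with $k = r + tm$ one has $\sigma c^k(x,y) = (x+r, \pi_{x+r}(y + w_k(x)))$ where each $\pi_x$ is either the identity or the transposition $(0\,1)$ of $\Z_2$, and $w_k(x) \in \{t, t+1\}$ depending only on whether $x \le m-1-r$. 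Encode $\pi_x$ by a bit $\epsilon_x \in \Z_2$ (with $\pi_x(y) = y + \epsilon_x$), so that $\sigma c^k(x,y) = (x+r,\, y + \epsilon_{x+r} + w_k(x))$, i.e. the second coordinate is shifted by the bit $\delta_k(x) := \epsilon_{x+r} + w_k(x)$.

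The key observation is that for a fixed shift $k = r + tm$, the permutation $\sigma c^k$ acts on $\Z_m \times \Z_2$ by cycling the first coordinate by $r$ and, along the way once around, adding up all the bits $\delta_k$; by Lemma~\ref{lem:lift}, the number of cycles of $\sigma c^k$ equals the number of cycles of $\pi' := x \mapsto \pi'_0(x)$-type compositions, but here more simply: if $r$ generates a single cycle structure on $\Z_m$ of $g := \gcd(r,m)$ cycles each of length $m/g$, then over each such cycle the total bit-shift is the sum $\sum \delta_k$ over the indices in that cycle; the cycle of $\Z_m$ lifts to one cycle of length $2 \cdot (m/g)$ in $\sigma c^k$ if the bit-sum is $1$, and to two cycles of length $m/g$ if the bit-sum is $0$. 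Thus $\mathrm{cyc}(\sigma c^k) = g + (\text{number of }\pi\text{-cycles over which the }\delta_k\text{-sum is }0)$. We want to show $\mathrm{cyc}(\sigma c^k) \ge 4$ for some $k$. The most useful case is $r = 0$ (so $k = tm$): then $\pi$ restricted to the identity has $m$ fixed points, $w_k \equiv t$ is constant, and $\sigma c^{tm}(x,y) = (x, y + \epsilon_x + t)$, which has $2 \cdot |\{x : \epsilon_x + t = 0\}| + |\{x : \epsilon_x + t = 1\}| = m + |\{x : \epsilon_x = t\}|$ cycles. Choosing $t$ so that $|\{x:\epsilon_x = t\}|$ is at least $\lceil m/2 \rceil \ge 2$ (possible since $m \ge 3$, for one of $t=0,1$), we already get $\mathrm{cyc}(\sigma c^{tm}) \ge m + 2 \ge 5$ unless $m$ is small — and even at $m = 3$ one value of $t$ gives $|\{x:\epsilon_x=t\}| \ge 2$, so $\mathrm{cyc} \ge 5$.

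So in fact the $r=0$ shifts already force at least $m + 2 \ge 5 > 4$ cycles whenever the bits $\epsilon_x$ are not perfectly balanced, and "perfectly balanced" is impossible for odd $m$; the remaining subtlety is even $m$ with exactly $m/2$ bits equal to each value, where $r=0$ only gives $m + m/2$ cycles — still $\ge 4$ for $m \ge 3$. Actually for all $m \ge 3$ we have $m + \lceil m/2\rceil \ge 4 + 2 = \dots$, in fact $m + \lfloor m/2 \rfloor \ge 4$ already at $m = 3$ ($3 + 1 = 4$). Hence the $r = 0$ analysis alone suffices and Lemma~\ref{lem:complete} is not even needed for this weak bound. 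The place where Lemma~\ref{lem:complete} would genuinely be needed is if one insisted on the stronger conclusion $t([\sigma]) \le 2m-4$ via a single specially-chosen shift rather than over all shifts; the main obstacle in writing a clean proof is simply bookkeeping the correspondence between $\gcd(r,m)$, the lifted cycle counts from Lemma~\ref{lem:lift}, and the parity of bit-sums, and then checking the small case $m=3$ by hand. I would therefore organize the final writeup as: (1) reduce to $\pi = \Id$; (2) derive the cycle-count formula $\mathrm{cyc}(\sigma c^{tm}) = m + |\{x : \epsilon_x = t\}|$ for the shifts with $r = 0$; (3) pick $t$ by pigeonhole so this is $\ge m + \lceil m/2 \rceil \ge 4$; (4) conclude $t([\sigma]) = 2m - \min_k \mathrm{cyc}(\sigma c^k) \le 2m - 4$.
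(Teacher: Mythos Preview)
Your reduction in step~(1) is invalid. You write ``replace $\sigma$ by $\sigma c$ if necessary so as to arrange that $\pi$ is the identity permutation on $\Z_m$,'' but this only works when $m=2$ (which is the \emph{other} proposition in the appendix). Here $\pi\in S(\Z_m)$ with $m\geq 3$ is an arbitrary permutation, and the first-coordinate action of $\sigma c^k$ is $x\mapsto \pi(x+k)$, i.e.\ a cyclic shift of $\pi$. No power of $c$ will turn a generic $\pi$ into the identity; that would require $\pi$ itself to be a translation $x\mapsto x+a$. Everything that follows in your plan---the formula $\sigma c^k(x,y)=(x+r,\,y+\epsilon_{x+r}+w_k(x))$, the $r=0$ analysis giving $\mathrm{cyc}(\sigma c^{tm})=m+|\{x:\epsilon_x=t\}|$, and the pigeonhole in step~(3)---rests on $\pi=\Id$ and therefore collapses.

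The paper's proof confronts exactly this obstacle. It first disposes of the easy case where some cyclic shift of $\pi$ fails to have cycle type $(1,m-1)$ (then that shift already has a fixed point plus two further cycles, and the fixed point lifts to two fixed points of a suitable $\sigma c^k$, giving four cycles). The hard case is when \emph{every} cyclic shift of $\pi$ has type $(1,m-1)$; then $m$ must be odd, the map $r\mapsto d_r$ sending a shift to its fixed point is a complete mapping, and Lemma~\ref{lem:complete} is used to derive a parity contradiction from the assumption that all the carefully chosen shifts $\sigma c^{r+mt_r}$ have at most three cycles. So Lemma~\ref{lem:complete} is not optional here---it is precisely what handles the case your reduction cannot reach.
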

\begin{proof}
If $\pi$ does not have cycle type $(1,m-1)$ in all cyclic shifts, then some cyclic shift of $\pi$ has a fixed point~$d$ and at least two other cycles. Now $\sigma$ will have a cyclic shift with fixed points $(d,0)$ and $(d,1)$ and at least two more cycles. So we may assume that $\pi$ has cycle type $(1,m-1)$ in all cyclic shifts. In particular, $m$ is odd. 

For $r\in \Z_m$, denote by $d_r$ the unique fixed point of the cyclic shift $x\mapsto \pi(x+r)$. So $\pi(d_r+r)=d_r$. Since every $x\in \Z_m$ is the fixed point of some cyclic shift of $\pi$, we have $\{d_0,\ldots, d_{m-1}\}=\Z_m$ and $\{d_0+0,\ldots, d_{m-1}+(n-1)\}=\pi^{-1}(\Z_m)=\Z_m$. So the map 
\[
\tau:\Z_m\to \Z_m, \quad r\mapsto d_r
\]
is a complete mapping.   

There exist $b_x\in \Z_2$ such that $\pi_x(y)=y+b_x$ for all $x\in \Z_m$. Recall that given an integer $k=r+tm$ where $r\in \{0,\ldots, m-1\}$ and $t\in \Z$, we define 
\[
w_k:\Z_m\to \Z_2,\qquad 
w_k(x)=\begin{cases}t&\text{if } x\in\{0,1,\ldots, m-1-r\},\\
t+1&\text{otherwise}.
\end{cases}
\]
Let $r\in \{0,\ldots, m-1\}$. We choose $t_r\in \{0,1\}$ such that $w_{r+mt_r}(d_r)+b_{d_r+r}=0$.
By Lemma~\ref{lem:sigmarotate} and our definition of $\{b_x\}$,
\begin{equation}
\label{eq:shift_sigma_2p}
\sigma c^{r+mt_r}(x,y)=(\pi(x+r), \pi_{x+r}(y+w_{r+mt_r}(x)))=(\pi(x+r), y+w_{r+mt_r}(x)+b_{x+r}).
\end{equation}

With $x=d_r$, our choice of $t_r$ ensures that 
$\sigma c^{r+mt_r}$ has fixed points $(d_r,0)$ and $(d_r,1)$. 
Suppose that the remaining cycle of $\pi$ is $(x_1~x_2~\dots~x_{m-1})$. (Here we omitted the dependence of $x_i$ on $r$ for readability purposes.)
As $\sigma c^{r+mt_r}$ already has two fixed points, it has only three cycles if and only if $(\sigma c^{r+mt_r})^{m-1}(x_1,0)=(x_1,1)$ by Lemma~\ref{lem:lift}.
By (\ref{eq:shift_sigma_2p}), this is equivalent to
\[\sum_{i=1}^{m-1} (b_{x_i+r} +w_{r+mt_r}(x_i))=1.\]
By definition, $w_{r+mt_r}(x)=w_r(x)+t_r$ for each $x$. As $m-1\equiv 0\pmod{2}$, we find that
\[
\sum_{i=1}^{m-1} (b_{x_i+r} +w_{r+t_rm}(x_i))=\sum_{i=1}^{m-1} (b_{x_i+r} +w_{r}(x_i))=b_{d_r+r}+w_r(d_r)+\sum_{x\in \Z_m} (b_{x+r}+w_{r}(x)).
\]
Using that $\sum_{x\in \Z_m}w_r(x)=r$, 
\[
b_{d_r+r}+w_r(d_r)+\sum_{x\in \Z_m} (b_{x+r}+w_{r}(x))=
r+(b_{d_r+r}+w_r(d_r))+\sum_{x\in \Z_m} b_x.
\] 

We conclude that $(\sigma c^{r+mt_r})^{m-1}$ has at most three cycles if and only if 
\begin{equation}\label{eq:threecycles}
(r+w_r(d_r))+b_{d_r+r}=1+\sum_{x\in \Z_m} b_x.
\end{equation}

Now suppose for contradiction that $(\sigma c^{r+mt_r})^{m-1}$ has at most three cycles for every $r\in \{0,\ldots, m-1\}$. By summing all $m$ equations (\ref{eq:threecycles}) we obtain 
\[
\sum_{r=0}^{m-1}(r+w_r(d_r))+\sum_{r=0}^{m-1}b_{d_r+r}=\sum_{r=0}^{m-1}(1+\sum_{x\in \Z_m} b_x).
\]
Using that $m$ is odd and that $\{d_r+r:r\in \{0,1,\dots,m-1\}\}=\Z_m$, we get
\[
\sum_{r=0}^{m-1}(r+w_{r}(d_r))\equiv 1\pmod{2}.
\]
However, since $\tau:r\mapsto d_r$ is a complete mapping, we can apply  Lemma~\ref{lem:complete} to obtain
\[
\sum_{r=0}^{m-1}(r+w_{r}(d_r))=r(r-1)/2+\sum_{r=0}^{m-1}w_{r}(\tau(r))=r(r-1)/2+(r-1)/2\equiv 0\pmod{2}.
\]
This contradiction shows that the assumption that $\sigma c^{r+mt_r}$ has at most $3$ cycles for every $r$ is false, completing the proof.
\end{proof}

\end{document}